\documentclass[a4paper, 11pt, twoside]{article}
\bibliographystyle{plain}

\synctex=1

\usepackage{hyperref}

\usepackage{amsmath,amssymb}
\usepackage{amsthm}
\usepackage{mathrsfs}
\usepackage[dvipdfmx]{graphicx}

\usepackage[normalem]{ulem}
\usepackage{url}
\usepackage{comment}
\usepackage{bm}
\usepackage{accents}

\usepackage{enumerate}
\usepackage[all]{xy}
\usepackage{mathtools}
\usepackage{graphicx}
\usepackage{overpic}
\usepackage{pdfpages}

\usepackage[margin=1in]{geometry}

\usepackage{fancyhdr}

\fancyhf{}
\fancyhead[LE,RO]{\thepage}

\fancyhead[CE]{\shorttitle}
\fancyhead[CO]{\authors}
\pagestyle{fancy}

%\numberwithin{equation}{section}

\setcounter{tocdepth}{1}

\usepackage{tikz}
\usepackage{tikz-cd}
\usepackage{pgfplots}

\usetikzlibrary{arrows}
\usetikzlibrary{patterns}

\theoremstyle{definition}
\newtheorem{defi}{Definition}[section]
\newtheorem{rem}[defi]{Remark}
\newtheorem{ex}[defi]{Example}

\newtheorem*{acknow}{Acknowledgements}

\theoremstyle{plain}
\newtheorem{thm}[defi]{Theorem}

\newtheorem{prop}[defi]{Proposition}
\newtheorem{lem}[defi]{Lemma}
\newtheorem{cor}[defi]{Corollary}

\newtheorem{question}{Question}[section]

\newcommand{\Hom}{\operatorname{Hom}}

\renewcommand{\ker}{\operatorname{Ker}}
\newcommand{\coker}{\operatorname{Coker}}

\renewcommand{\Im}{\operatorname{Im}}

\newcommand{\C}{\mathbb{C}}
\newcommand{\R}{\mathbb{R}}

\newcommand{\Z}{\mathbb{Z}}

\newcommand{\SO}{\mathrm{SO}}
\newcommand{\Spin}{\mathrm{Spin}}

\newcommand{\Dvar}{\overline{\partial}}

\newcommand{\id}{\operatorname{id}}

\renewcommand{\tilde}{\widetilde}

\renewcommand{\epsilon}{\varepsilon}

\newcommand{\la}{\langle}
\newcommand{\ra}{\rangle}

\newcommand{\sign}{\operatorname{sign}}

\newcommand{\aug}{\operatorname{aug}}
\newcommand{\Dbar}{\operatorname{\bar{\partial}}}
\newcommand{\Ham}{\operatorname{Ham}}

\newcommand{\topwedge}{{\textstyle{\bigwedge}^{\!\mathrm{max}}}}

\newcommand{\rest}[2]{\left.#1\right|_{#2}}

\newcommand{\Graph}{\operatorname{Graph}}
\newcommand{\gr}{\operatorname{gr}}
\newcommand{\ori}{\mathrm{ori}}

\newcommand\shorttitle{On knot types of clean Lagrangian intersections in $T^*\mathbb{R}^3$}
\newcommand\authors{Yukihiro Okamoto}

\usepackage{authblk}

\AtEndDocument{%
  \par
  \medskip
  \begin{tabular}{@{}l@{}}%
    Department of Mathematical Sciences, Tokyo Metropolitan University, \\
Minami-osawa, Hachioji, Tokyo, 192-0397, Japan \\
    \textit{E-mail address}: \texttt{yukihiro@tmu.ac.jp}
  \end{tabular}}

\begin{document}

\title{On knot types of clean Lagrangian intersections in $T^*\mathbb{R}^3$}
\author{Yukihiro Okamoto }
\date{}
\maketitle

\begin{abstract}
\noindent
Let $K_0$ and $K$ be knots in $\mathbb{R}^3$. Suppose that by a compactly supported Hamiltonian isotopy on $T^*\R^3$, the conormal bundle of $K_0$ is isotopic to a Lagrangian submanifold which intersects the zero section cleanly along $K$. In this paper, we prove some constraints on the pair of knot types of $K_0$ and $K$.
One example is that if $K_0$ is the unknot, then $K$ is also the unknot. We also consider some cases where $K_0$ and $K$ have specific knot types, such as torus knots and connected sums of trefoil knots. The key step is finding a DGA map between the Chekanov-Eliashberg DGAs of the unit conormal bundles of knots. The main results are deduced from a relation between the augmentation varieties of $K_0$ and $K$ determined by these DGAs.
\end{abstract}

\section{Introduction}

\textbf{Background and motivation.}
Let us first consider a closed symplectic manifold $M$ and compact Lagrangian submanifolds $L_0$ and $L_1$ of $M$. We assume that $L_0$ and $L_1$ have a clean intersection $K \coloneqq L_0\cap L_1$ and $K$ is connected.
Then, under several conditions on $M,L_0$ and $L_1$, Po\'{z}niak proved in \cite[Corollary 3.4.13]{P} the following result about the $\Z/2$-coefficient Floer homology:
\[ HF_*(L_0,L_1)\cong  H_*(K; \Z/2).\]
An analogous result in $\Z$-coefficients follows from \cite[Theorem 9.18]{S} by Schm\"{a}schke when the pair $(L_0, L_1)$ has a relative spin structure.

In this paper, we consider exact symplectic manifolds, mainly the cotangent bundle of $\R^n$.
%In fact, we only deal with the case where $Q$ is a closed manifold or an Euclidean space.
The image of the zero section of $T^*\R^n$ is identified with $\R^n$. 
Let $L_1$ be an exact Lagrangian submanifold of $T^*\R^n$ which bounds a compact Legendrian submanifold of the unit cotangent bundle of $\R^n$. 
We make two assumptions: The first one is that $\R^n$ and $L_1$ have a clean intersection $K=\R^n\cap L_1$, and the second one is that $K$ is connected.
Under some conditions on $L_1$, we can prove in the same way as in \cite{P} a result about the $\Z/2$-coefficient Floer cohomology
\[  HF^*(\R^n, L_1) \cong H^*(K;\Z/2).\]
For the proof, see Proposition \ref{prop-Floer}.
This gives a topological constraint on the manifold $K$.
To go beyond this result, we ask \textit{how} $\mathit{K}$ \textit{is embedded as a submanifold of} $\mathit{\R^n}$.

More specifically, suppose that $n=3$ and $L_1$ is isotopic to the conormal bundle $L_{K_0}$ of a knot $K_0$ in $\R^3$ by a compactly supported Hamiltonian isotopy.
Then, the invariance of the Floer cohomology by such an isotopy shows that $H^*(K;\Z/2)\cong H^*(K_0;\Z/2)$, which implies that $K$ is also a knot in $\R^3$.
Then, we ask a question;
\begin{question}\label{question-1}
Can  the knot type of $K$ be changed from the knot type of $K_0$?
\end{question}
At present, the author does not know an example that $K$ and $K_0$ are not isotopic.
Here, it is essential that the isotopy is a Hamiltonian isotopy. In fact, for an arbitrary pair of knots $(K_0, K)$, there exists a compactly supported $C^{\infty}$ isotopy $(\varphi^t)_{t\in [0,1]}$ on $T^*\R^3$ such that $\varphi^1 (L_{K_0})$ intersects $\R^3$ cleanly along $K$. See Proposition \ref{prop-smooth-isotopy}.

In fact, our setting in $T^*\R^3$ can be embedded into a setting where I. Smith posed a question. We discuss his question and the preceding results about it at the end of this section.

\

\noindent
\textbf{Notation.}
Throughout this paper, we abbreviate singular homology group over the integers $H_*(X ;\Z)$ for any topological space $X$ by $H_*(X)$.

\

\noindent
\textbf{Main results.}
In \cite{N}, Ng defined the \textit{augmentation variety} $V_K$ for any knot $K$ in $\R^3$. It is a knot invariant given as a subset of $(\C^*)^2$. See Definition \ref{defi-aug} using the conventions of \cite{N-intro}.  In the situation of Question \ref{question-1} where $\varphi(L_{K_0})$ and $\R^3$ have a clean intersection along $K$, we prove the following result about the knots $K_0$ and $K$.
\begin{thm}[Theorem \ref{cor-aug}]\label{thm-intro-aug} There exist $a,b\in \Z$ with $a \equiv 1 \mod 2$ such that the set
\[\{( x' ,y') \in (\C^*)^2\mid \text{ there exists } (x,y)\in V_{K} \text{ such that } (x')^a = x y^{-b} \text{ and }y'= y^{ a} \} \]
is a subset of $V_{K_0}$.
\end{thm}

Let us see how this theorem is deduced from \textit{Symplectic Field Theory} (\textit{SFT}) introduced by Eliashberg, Givental and Hofer \cite{EGH}.
As a notation, let $U^*\R^3$ denote the unit cotangent bundle of $\R^3$ with respect to the standard metric.
For any knot $K$ of $\R^3$, let $\Lambda_{K}$ denote its unit conormal bundle. Then, $U^*\R^3$ has a canonical contact structure and $\Lambda_K$ is a Legendrian submanifold of $U^*\R^3$.

We focus on the \textit{Chekanov-Eliashberg differential graded algebra} (\textit{DGA}) of a Legendrian submanifold of a contact manifold.
This is an algebraic invariant of Legendrian submanifolds introduced by Chekanov \cite{Chek} for Legendrian links and by Eliashberg \cite{Eli} in the higher dimensional case. For the construction under some conditions, we will refer to \cite{DR, EES-R,EES-ori,EES}.
We use a result by Ekholm, Etnyre, Ng and Sullivan \cite{EENS} that the Chekanov-Eliashberg DGA of $\Lambda_K$ for a knot $K$ in $\R^3$ is stable tame isomorphic to the \textit{framed knot DGA}, which is combinatorially defined from a braid representation of $K$. By definition, the augmentation variety $V_K$ is determined by the framed knot DGA of $K$.

The relation between $V_{K_0}$ and $V_K$ follows from a DGA map between the Chekanov-Eliashberg DGAs associated to an exact Lagrangian cobordism which has $\Lambda_{K_0}$ as the positive boundary and $\Lambda_{K}$ as the negative boundary.
We state a more general result in Theorem \ref{thm-clean-intersection} for an exact Lagrangian submanifold in $T^*\R^n$ which has a clean intersection with the zero section.

The DGA maps in SFT have been studied by, for instance, \cite{E, EHK} in $\Z/2$-coefficient and \cite{K} in $\Z$-coefficient.
For the sake of our applications, we want to deal with DGAs with coefficients in the integral group rings, and therefore we need to care about the orientations of the moduli spaces of pseudo-holomorphic curves.

\begin{rem}
For any knot $K$ in $\R^3$, it is known that the Chekanov-Eliashberg DGA of $\Lambda_K$ contains much information about the knot type of $K$. As a matter of fact, Ekholm-Ng-Shende \cite[Theorem 1.1]{ENS} shows that an enhanced version of the homology of the Chekanov-Eliashberg DGA of $\Lambda_K$ is a complete knot invariant for $K$ up to taking the mirror of $K$.% (precisely, up to taking the mirror of $K$)

%See also \cite[Theorem 2]{She} which shows that the Legendrian isotopy class of $\Lambda_K$ is a complete knot invariant.
\end{rem}

Combining Theorem \ref{thm-intro-aug} with the computations of the augmentation varieties of knots,
we can find constraints on the pair of knot types of $K_0$ and $K$.
The first result is about the unknot. In fact, this follows from a result by Ganatra-Pomerleano \cite{GP} as we will see below. However, our method using Theorem \ref{thm-intro-aug} is different from theirs.

\begin{thm}[Proposition \ref{cor-unknot}]\label{thm-intro-unknot}
%Let $U$ be the unknot in $\R^3$ and $\varphi$ be a Hamiltonian diffeomorphism on $T^*\R^3$ with a compact support. Suppose that $\varphi(L_U)$ and $\R^3$ have a clean intersection along a knot $K$ in $\R^3$.
If $K_0$ is the unknot, then $K$ is also the unknot.
\end{thm}
We prove this theorem by using the arguments in the proof of \cite[Proposition 5.10]{N}, which proves that the unknot is detected by the cord algebra. 

The second result is about the right-handed trefoil knot $3_1$ and its mirror $\overline{3_1}$, and more generally, their connected sums. For $m,m'\in \Z_{\geq 0}$, let $K_{m,m'}\coloneqq (3_1)^{\# m}\# (\overline{3_1})^{\# m'}$ denote the connected sum of $m$ $3_1$ knots and $m'$ $\overline{3_1}$ knots.
\begin{thm}[Proposition \ref{prop-trefoil}]\label{thm-intro-trefoil}
Suppose that $K_0=K_{m_0,m'_0}$ and $K=K_{m,m'}$. Then, $m_0+m'_0\geq m+m'$. Moreover, if $m_0+m'_0=m+m'$, then $m_0\equiv m  \mod 2$.
In particular, if $K_0=3_1$, then, $K\neq \overline{3_1}$.
\end{thm}

The third result is about the torus knots. For $p,q\in \Z$ which are coprime and satisfy $1\leq p < |q|$, let $T_{(p,q)}$ denote the $(p,q)$-torus knot.
\begin{thm}[Proposition \ref{prop-torus}]\label{thm-intro-torus}
Suppose that $K_0=T_{(p_0,q_0)}$ and $K=T_{(p,q)}$. Then, $p_0\geq p$. Moreover, if $p_0=p\geq 2$, then $q_0= q$.
\end{thm}
%\begin{rem} It would be interesting to extend Question \ref{question-1} for knots in $\R^3$ to those in general $3$-manifolds. To generalize our discussion, we need  to extend some results related to knot contact homology. One possible way is to extend the result of \cite{CELN} about the cord algebra of knots in $\R^3$ to more general $3$-manifolds. \end{rem}

\noindent
\textbf{Relation to a question by I.  Smith and preceding results}%\footnote{This section is about the results that the author did not notice when writing the previous version.}

In \cite{GP} and \cite{SW}, the following question by Smith, concerning the ``persistence (or rigidity) of unknottedness of Lagrangian intersections'', was considered. We refer to \cite[Question 1.5]{GP}.
\begin{question}[Smith]
In a symplectic manifold $X$ of real dimension $6$, if a pair of Lagrangian $3$-spheres $S_1$ and $S_2$ meet cleanly along a circle, can the isotopy class of this knot (in $S_1$ and $S_2$) change under (nearby) Hamiltonian isotopy?
\end{question}
In \cite{GP}, Ganatra-Pomerleano answered this question negatively when $X$ is a plumbing $W_1$ of $T^*S_1$ and $T^*S_2$ along the unknots in both $S_1$ and $S_2$. For the definition of the plumbings $W_n$ for $n\in \Z_{\geq 0}$, see \cite[Subsection 6.4]{GP}.
\begin{thm}[Proposition 6.29 of \cite{GP}]\label{thm-Smith}
For $\tilde{S}_i$ ($i=1,2$) which is isotopic to $S_i$ by a Hamiltonian isotopy on $W_1$, suppose that $\tilde{S}_1$ and $\tilde{S}_2$ intersect cleanly along a knot. Then, this knot must be the unknot in both $\tilde{S}_1$ and $\tilde{S}_2$.
\end{thm}
We should note that there was a study preceding \cite{GP} by Evans-Smith-Wemyss showing that $\tilde{S}_1\cap \tilde{S}_2$ is the unknot in one component and is the unknot or the trefoil knot in the other component. See \cite[Proposition 6.28]{GP}.
In addition, Theorem \ref{thm-Smith} still holds if we replace $W_1$ by $W_0$. See \cite[Proposition 4.10]{SW} by Smith-Wemyss.

A brief explanation of the proof in \cite{GP} is as follows: A crucial step is \cite[Proposition 6.32]{GP} which shows that the plumbing $W_1$ admits a \textit{dilation} in the symplectic cohomology (with coefficients in a field of characteristic $3$).
Then, the existence of a dilation leads to a strong topological constraint on closed exact Lagrangian submanifolds in $W_1$, especially on the Lagrangian submanifold by a Polterovich surgery of $\tilde{S}_1\cup \tilde{S}_2$.
%To prove \cite[Proposition 6.32]{GP}, they embedded $W_1$ explicitly into an affine variety in order to apply their studies on the log PSS morphisms.

We return to our setting in $T^*\R^3$ and consider $L_1= \varphi^1(L_{K_0})$, where $K_0$ is the unknot and $(\varphi^t)_{t\in [0,1]}$ is a compactly supported Hamiltonian isotopy.
As a notation, for any Riemannian manifold $M$, let $D^*_aM$ denote the disk bundle of $T^*M$ of radius $a>0$. By an open embedding of $\R^3$ into $S_1$, $T^*\R^3$ is identified with an open subset of $T^*S_1$. 
We take a Riemannian metric on $S_1$.
Suppose that the support of $\varphi^t$ for every $t\in[0,1]$ is contained in $D^*_r S_1$ for some $r>0$, and $K_0$ has a tubular neighborhood $N_1$ in $S_1$ of radius $\epsilon>0$.
For $S_2$, we also consider the unknot $K'_0$ and take a Riemannian metric so that $K'_0$ has a tubular neighborhood $N_2$ of radius $r$.
Using the exponential maps on $S_1$ and $S_2$ with respect to the Riemannian metrics, $N_1$ is identified with the disk bundle of the normal bundle $\nu_{K_0}$ of $K_0\subset S_1$ of radius $\epsilon$, and $N_2$ is identified with the disk bundle of the normal bundle $\nu_{K'_0}$ of $K'_0\subset S_2$ of radius $r$.

We may suppose that there is a diffeomorphism $\bar{\eta}\colon K_0\to K'_0$ such that the induced isomorphism $T^*K'_0\to T^*K_0$ maps $D^*_{\epsilon}K'_0$ to $D^*_rK_0$.
Given an isomorphism $\eta\colon \nu_{K_0}\to \nu_{K'_0}$ which is a lift of $\bar{\eta}$ and preserve the fiber metrics, we define a plumbing $W_{\eta}$ by gluing $D^*_r S_1$ and $D^*_{\epsilon} S_2$ in the way as in \cite[Subsection 6.4]{GP} by replacing the unit disk bundles of $S_1$ and $S_2$ by $D^*_r S_1$ and $D^*_{\epsilon} S_2$.
%Here, the gluing map is defined by restricting on $D^*_{r}N_1$ the same map as in \cite[Subsection 6.4]{GP} which is given by $\eta \otimes (\cdot \sqrt{-1})\colon \nu_{K_0}\otimes \C \to \nu_{K'_0}\otimes \C$ through appropriate identifications.
Note that there exists a natural embedding $D^*_r S_1 \to W_{\eta}$ by which $\R^3$ is embedded into $S_1$ as we did and $L_{K_0} \cap D^*_r S_1$ is embedded into $S_2$. In this case, $K_0$ is diffeomorphically mapped onto $S_1\cap S_2$. 
The completion of $W_{\eta}$ as a Liouville domain is isotopic to the completion of one of the plumbings $(W_n)_{n=0,1,\dots}$, and we may take $\eta$ so that it is isotopic to the completion of $W_1$. For such $\eta$, the symplectic cohomology of $W_{\eta}$ admits a dilation, and thus the same statement as Theorem \ref{thm-Smith} still holds if we replace $W_1$ by $W_{\eta}$. Then, Theorem \ref{thm-intro-unknot} is a direct consequence of this result.

\

\noindent
\textbf{Organization of paper.}
In Section \ref{sec-clean}, we will see how to find a Lagrangian cobordism from a Lagrangian submanifold in the cotangent bundle which intersects the zero section cleanly.
We will also study in Subsection \ref{subsec-Floer} what the Floer cohomology tells us about clean Lagrangian intersections.
In Section \ref{sec-DGA}, we review the construction of the Chekanov-Eliashberg DGA with coefficients in the integral group ring, and define a DGA map associated to an exact Lagrangian cobordism.
Some technical arguments, especially on the orientations of moduli spaces are left to Appendix \ref{sec-proof}.
In Section \ref{sec-knot}, we introduce the knot DGA and the augmentation variety of knots and prove Theorem \ref{thm-intro-aug}.
By using known results about the augmentation variety, we deduce Theorem \ref{thm-intro-unknot}, \ref{thm-intro-trefoil} and \ref{thm-intro-torus}.
In Appendix \ref{sec-proof}, we fix orientations of moduli spaces and prove Theorem \ref{thm-DGA-map} about the DGA map over the integral group rings.

\begin{acknow}
The author would like to thank his supervisor Kei Irie for reading the draft and making valuable comments, and Tomohiro Asano for helpful discussion on Proposition \ref{prop-Floer}.
He also appreciates the referees for informing him of the results of \cite{GP, SW} and for many comments which correct and improve the arguments in the article, especially in Appendix \ref{sec-proof}.
This work was supported by JST, the establishment of university fellowships towards the creation of science technology innovation, Grant Number JPMJFS2123.
The author is also supported by JSPS KAKENHI Grant Number JP23KJ1238.
\end{acknow}

\section{Clean intersection in the cotangent bundle}\label{sec-clean}
In this section, we consider a Lagrangian submanifold in the cotangent bundle $T^*Q$ which have a clean intersection with the zero section. We associate to it a Lagrangian cobordism in the symplectization of the unit cotangent bundle. We also study a relation between the Floer cohomology group of  Lagrangian submanifolds in $T^*Q$ and  the singular cohomology group of the clean intersection with coefficients in $\Z/2$.

\subsection{Preliminaries}\label{subsec-pre}
Let $W$ be a symplectic manifold with a symplectic form $\omega$.
Given a Hamiltonian $H\colon W\times [0,1]\to \R$, a time-dependent Hamiltonian vector field $(X_t)_{t\in [0,1]}$ is defined by
\[\omega ( \cdot, X_t) = d (H(\cdot ,t)). \]
If $(X_t)_{t\in [0,1]}$ generates a flow $(\varphi^t_H)_{t\in [0,1]}$, we denote the Hamiltonian diffeomorphism $\varphi^1_H$ by $\varphi_H$.
Let $\Ham_c(W)$ denote the group of compactly supported Hamiltonian diffeomorphisms on $W$.

We define the notion of clean intersections of two submanifolds.
\begin{defi}
Let $L_0$ and $L_1$ be two submanifolds of $W$.
We say that $L_0$ and $L_1$ have a \textit{clean intersection}, if $L_0\cap L_1$ is a submanifold of both $L_0$ and $L_1$, and
\[T_x (L_0\cap L_1) = T_xL_0\cap T_xL_1 \]
holds for every $x\in L_0\cap L_1$.
\end{defi}
We are mainly interested in the case where both $L_0$ and $L_1$ are Lagrangian submanifolds of $W$.
Let us consider a linear model in $\C^n$ with a symplectic form $\omega_0= \sum_{i=1}^n dx_i\wedge dy_i$ and a metric $\sum_{i=1}^n (dx_i\otimes dx_i +dy_i \otimes dy_i)$. Here, $(x_i+\sqrt{-1}y_i)_{i=1,\dots ,n}$ is the coordinate of $\C^n$.
For any $\R$-subspace $V$ of $\R^n$ and its orthogonal complement $V^{\perp}$ in $\R^n$, the two Lagrangian subspaces
\begin{align*}
L_0= V\oplus \sqrt{-1}V^{\perp},\ L_1 =\R^n
\end{align*}
have a clean intersection $V=L_0\cap L_1$.
For these Lagrangian subspaces of $\C^n$, the following is standard. 
\begin{lem}\label{lem-linear}
For any Lagrangian subspace $L$ in $\C^n$ with $L_0 \cap L =V$, there exists a symmetric operator $S\colon V^{\perp} \to V^{\perp}$ such that $L= \{(v+w)+ \sqrt{-1} S w \mid v\in V,\ w\in V^{\perp} \}$. In particular, the projection $L\to L_1\colon x+ \sqrt{-1} y \mapsto x$ is an isomorphism.
\end{lem}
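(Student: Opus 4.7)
The plan is to proceed in three direct steps, using only the Lagrangian property and the hypothesis $L\cap L_0=V$ (which in particular gives $V\subseteq L$). First I would show that any $x+\sqrt{-1}y \in L$ has $y\in V^{\perp}$. Since $V$ is contained in $L$ and $L$ is Lagrangian, pairing $x+\sqrt{-1}y$ with an arbitrary $v\in V$ via $\omega_0$ immediately forces $\langle v,y\rangle =0$.

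Next I would establish the last assertion of the lemma, that the projection $\pi(x+\sqrt{-1}y)=x$ restricted to $L$ is an isomorphism onto $\R^n$. Because $\dim L=n$, injectivity suffices: if $\sqrt{-1}y\in L$, Step 1 gives $y\in V^{\perp}$, so $\sqrt{-1}y\in L_0$; then $\sqrt{-1}y\in L\cap L_0 =V\subset \R^n$, which forces $y=0$.

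With this isomorphism, I can construct $S$. Inverting $\pi|_L$ produces a linear map $T\colon \R^n\to V^{\perp}$ with $x+\sqrt{-1}T(x)\in L$ for every $x$. Because $V\subseteq L$ lies in the real subspace, $T$ vanishes on $V$, and therefore descends along the orthogonal decomposition $\R^n=V\oplus V^{\perp}$ to a linear map $S\colon V^{\perp}\to V^{\perp}$; this yields the desired parametrisation $L=\{(v+w)+\sqrt{-1}Sw \mid v\in V,\ w\in V^{\perp}\}$. Finally, symmetry of $S$ is an immediate consequence of the Lagrangian condition applied to pairs $w_j+\sqrt{-1}Sw_j$ for $w_j\in V^{\perp}$, which gives $\langle w_1,Sw_2\rangle =\langle Sw_1,w_2\rangle$.

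There is no real obstacle in this argument; it is just a short chain of consequences of $L$ being Lagrangian together with the clean intersection hypothesis. The one point worth being attentive to is the systematic use of $V\subseteq L$ (supplied by $L\cap L_0=V$), which underlies both Step 1 and the descent of $T$ to $S$; without this inclusion one would not even have $\omega_0(x+\sqrt{-1}y,v)=0$ available as an a priori constraint.
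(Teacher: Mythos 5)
Your argument is correct. It proves the three facts — imaginary parts of elements of $L$ lie in $V^{\perp}$, the real-part projection $\pi|_L$ is injective, and $T=(\pi|_L)^{-1}-\mathrm{id}$ vanishes on $V$ and is symmetric on $V^{\perp}$ — cleanly and in the right logical order, using only that $V\subseteq L$, that $L$ is Lagrangian, and that $L\cap L_0=V$ with $L_0=V\oplus\sqrt{-1}V^{\perp}$.

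The organization is slightly different from the paper's. The paper sets $X$ to be the orthogonal complement of $V$ in $L$, checks that $X$ sits inside the sub-symplectic space $V^{\perp}\oplus\sqrt{-1}V^{\perp}$ as a Lagrangian transverse to $\sqrt{-1}V^{\perp}$, and then invokes the standard fact that such a Lagrangian is the graph of a symmetric operator; the ``in particular'' about $\pi|_L$ falls out at the end. You instead prove the projection isomorphism first (which is the ``in particular'' part) and then read off $S$ by inverting $\pi|_L$ and restricting to $V^{\perp}$, effectively re-deriving the graph-of-a-symmetric-operator fact rather than quoting it. Both are short linear-algebra proofs leading to the same $S$; yours has the small advantage of being self-contained and of keeping the hypothesis $L_0\cap L=V$ exactly as stated (the paper's proof writes ``$L\cap L_1=V$'' at the corresponding step, which appears to be a typo for $L_0$), while the paper's version is closer to how one would cite this as a known lemma.
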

\begin{proof}
Let $X\subset L$ be the orthogonal complement of $V$ in $L$. Then $X$ is a Lagrangian subspace of $V^{\perp}\oplus \sqrt{-1}V^{\perp}$, since any vector $x\in X$ satisfies $ \omega_0(x, v) = 0= \omega_0(x,\sqrt{-1} v)$ for all $v\in V$. The condition that $L\cap L_1=V$ implies that $X\cap \sqrt{-1} V^{\perp}=0$, and thus there exists a symmetric operator $S$ on $V^{\perp}$ such that $X= \{w+\sqrt{-1}S w \mid w\in V^{\perp}\}$. This proves the lemma.
\end{proof}

\subsection{Lagrangian cobordism arising from Lagrangian submanifolds with clean intersection}\label{subsec-clean}

Let $Q$ be an arbitrary manifold. In the cotangent bundle $T^*Q$, we identify the image of the zero section with $Q$.
For any submanifold $K$ of $Q$, let $L_K$ denote its conormal bundle. We fix a Riemannian metric on $Q$ and
 define $D_r^*Q\coloneqq \{(q,p)\in T^*Q \mid |p|<r \}$ and $\overline{D^*_rQ} \coloneqq \{(q,p)\in T^*Q \mid |p|\leq r \}$ for any $r>0$.

Let $\lambda_Q$ be the canonical Liouville  $1$-form on $T^*Q$.
More precisely, if $(q_1,\dots ,q_n)$ is a local coordinate of $Q$ and $(p_1,\dots ,p_n)$ is the associated coordinate of the fiber in $T^*Q$, then $\lambda_Q=\sum_{i=1}^n p_idq_i$.
%Then, $Q$ and $L_K$ are exact Lagrangian submanifolds of $T^*Q$.
%We consider a Lagrangian submanifold of $T^*Q$ which have a clean intersection with $Q$.

\begin{lem}\label{lem-neighborhood}
Let $L_1$ be a Lagrangian submanifold of $T^*Q$ such that $L_1 \cap \overline{ D^*_r Q}$ is compact for some $r>0$. Suppose that $Q$ and $L_1$ have a clean intersection. Let us write $K=Q\cap L_1$.
Then, there exists $\psi \in \Ham_c(D^*_rQ)$ and $\epsilon\in (0,r]$ such that:
\begin{itemize}
\item $\psi(x)=x$ for every $x\in Q$.
\item $\psi (L_1) \cap D^*_{\epsilon}Q = L_K \cap D^*_{\epsilon}Q$.
\end{itemize}
\end{lem}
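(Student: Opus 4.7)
The approach has three main ingredients: a pointwise linear analysis along $K$ using Lemma \ref{lem-linear}, an explicit model Hamiltonian that corrects $L_1$ to first order along $K$, and a Moser-type argument to remove the higher-order discrepancy. At each $x \in K$, I orthogonally decompose $T_x Q = T_x K \oplus N_x K$ and identify $T_x^* Q \cong T_x Q$ via the metric, so that $T_x(T^*Q) \cong T_x K \oplus N_x K \oplus T_x^* K \oplus N_x^* K$. Applying Lemma \ref{lem-linear} with $V = T_x K$ (so $L_0$ of the lemma corresponds to $T_x L_K$ and $L_1$ of the lemma to $T_x Q$) gives $T_x L_1 = T_x K \oplus \Gamma_x$, where $\Gamma_x \subset N_x K \oplus N_x^* K$ is the graph of a symmetric operator $S_x : N_x K \to N_x^* K$. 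The clean intersection $T_x L_1 \cap T_x Q = T_x K$ forces $S_x$ to be invertible, and $S_x$ depends smoothly on $x$.

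I next set up normal-form coordinates via a tubular neighborhood of $K$ in $Q$ (using the exponential map of the metric): bundle coordinates $(q, y, p, \eta)$ on a neighborhood of $K$ in $T^*Q$, with $q \in K$, $y \in N_q K$, $p \in T_q^* K$, $\eta \in N_q^* K$. In these coordinates $L_K = \{y = 0,\ p = 0\}$, and by the linear analysis $L_1$ is locally the graph of $df$ for some smooth $f(q, y)$ with $f(q, 0) = 0$, $df|_{y = 0} = 0$, and Hessian $\partial_y^2 f(q, 0) = S(q)$. Now set $H(q, y, p, \eta) = -\tfrac{1}{2}\langle S(q)^{-1}\eta, \eta\rangle$, with $S$ extended trivially in $y$ and the whole expression cut off to be compactly supported in $D^*_r Q$. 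Since $H$ vanishes to second order in the fiber direction on $Q$, its Hamiltonian vector field vanishes along $Q$, so $\varphi_H$ fixes $Q$ pointwise; a direct calculation in these coordinates shows that $\varphi_H(L_1)$ agrees with $L_K$ to first order along $K$.

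Finally, to eliminate the remaining higher-order discrepancy I apply a Moser-type argument. A smooth family $\{\tilde L_t\}_{t \in [0,1]}$ of Lagrangians near $K$ from $\tilde L_0 = \varphi_H(L_1)$ to $\tilde L_1 = L_K$, all intersecting $Q$ cleanly along $K$ with tangent $T_x L_K$, exists because the endpoints share the same first-order jet along $K$ and can be realized as graphs of closed $1$-forms over $L_K$ in its Weinstein neighborhood, then interpolated linearly. The deformation of this family corresponds to a time-dependent Hamiltonian $H_t$ vanishing on $K$; since all $\tilde L_t$ share the tangent data $T_x L_K$ along $K$, one extends $H_t$ to a Hamiltonian on $T^*Q$ vanishing to second order on $Q$, so its flow fixes $Q$ pointwise, and a cutoff keeps it compactly supported in $D^*_r Q$. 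Composing the time-$1$ flow with $\varphi_H$ yields $\psi$, with $\epsilon \in (0, r]$ chosen small enough that $D^*_\epsilon Q$ lies within the normal-form chart near $K$. The main obstacle is this Moser step: arranging $H_t$ to simultaneously vanish to second order on $Q$, be compactly supported in $D^*_r Q$, and generate the prescribed isotopy $\{\tilde L_t\}$, which requires careful use of the normal-form coordinates and the shared first-order data at $K$.
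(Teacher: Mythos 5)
Your first, linear step already contains a gap that undermines the rest of the argument. You apply Lemma~\ref{lem-linear} with $L_0 = T_xL_K$, $L_1 = T_xQ$, and $L = T_x(\text{our }L_1)$, and conclude that $T_x(\text{our }L_1)$ is a graph over $T_xQ$. But the hypothesis of the lemma is $L_0 \cap L = V$, i.e.\ $T_xL_K \cap T_x(\text{our }L_1) = T_xK$ --- which is \emph{not} what the clean intersection provides. The clean intersection gives $T_xQ \cap T_x(\text{our }L_1) = T_xK$, which is $L_1 \cap L = V$ in the lemma's notation. These two conditions are genuinely different, and the desired conclusion fails in general: in $T^*\R^2$ with coordinates $(q_1,p_1,q_2,p_2)$, the Lagrangian plane
\[
L_1 = \{(q_1,\, q_1,\, 0,\, p_2) \mid q_1, p_2 \in \R\}
\]
meets the zero section cleanly in $K = \{0\}$, yet is not a graph over $Q = \{p_1=p_2=0\}$ near $0$, since it projects to the line $\{q_2=0\}$. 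Hence your claim that ``by the linear analysis $L_1$ is locally the graph of $df$'' over $Q$ is false, and the explicit model Hamiltonian and the Moser step are both built on a normal form that does not exist.

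The correct consequence of the clean intersection, obtained by applying Lemma~\ref{lem-linear} with $L_0$ corresponding to $T_xQ$ and $\R^n$ corresponding to $T_xL_K$ (after a unitary rotation that exchanges these roles), is that $T_x(\text{our }L_1)$ is a graph over $T_xL_K$, not over $T_xQ$. This is exactly what the paper's proof exploits: it passes to Po\'zniak's Weinstein-type neighborhood $\Psi \colon D^*_{\epsilon_2}N \to T^*Q$ of a piece $N \subset L_K$, under which $L_K$ becomes the zero section of $T^*N$ and $Q$ becomes the conormal bundle $L'_K$. The clean intersection is then literally the lemma's hypothesis, so $\Psi^{-1}(L_1)$ is the graph of a closed $1$-form $\eta = dh$ over $N$ near $K$ (exactness follows since $\eta|_K = 0$ and $N$ retracts to $K$). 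The time-one flow of the compactly cut-off Hamiltonian $H = \rho h$ then carries the graph of $dh$ to the zero section $N$ in a single step, while fixing $L'_K$ (hence $Q$) pointwise because $dH$ vanishes there --- no separate Moser argument is required. If you rework your linear analysis to produce a graph over $L_K$ rather than over $Q$, this one-step Hamiltonian trick is the natural replacement for your two-step scheme.
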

\begin{proof}We denote $N\coloneqq D^*_{\epsilon_1}Q \cap L_K$ for some $\epsilon_1\in (0,r]$. Let us fix a Riemannian metric on $L_K$.
Since $N$ and $Q$ intersects cleanly along $K\subset N$, \cite[Proposition 3.4.1]{P} shows the following: If $\epsilon_1$ is sufficiently small, then there exist $\epsilon_2>0$ and an open symplectic embedding
\[\Psi \colon D_{\epsilon_2}^*N \to T^*Q\]
such that $\Psi(x)=x$ for every $x\in N$ and $\Psi^{-1}(Q)=L'_K \cap D^*_{\epsilon_2}N$,
where $L'_K$ is the conormal bundle of $K$ in $T^*N$.
Then, $\Psi^{-1}(L_1)$ is a Lagrangian submanifold of $T^*N$ which has a clean intersection with $L'_K$ along $K$.

Let $\pi\colon \Psi^{-1}(L_1) \to N$ be the restriction of the bundle projection $\pi_N\colon T^*N \to N$. We apply Lemma \ref{lem-linear} to the projection $(d\pi)_x$ for any $x\in K$, then it follows that $\pi$ is a submersion to $N$ on a neighborhood of $K$.
Therefore, there exists $\epsilon'_1 \in (0,\epsilon_1]$ and a closed $1$-form $\eta$ on $N'\coloneqq D_{\epsilon'_1}^*Q\cap L_K\subset N$ such that the graph of $\eta$ coincides with $\pi^{-1}(N')$.
Moreover, since $\eta_x=0$ for every $x\in K$, there exists a $0$-form $h$ on $N'$ such that $\eta=d h$ and $h(x)=0$ for every $x\in K$.

Now we take a time-independent Hamiltonian on $D^*_{\epsilon_2}N'$
\[H\coloneqq \rho \cdot (h\circ \pi_N) \colon D_{\epsilon_2}^*N'\to \R\colon (q,p) \mapsto  \rho(q,p)h(q),\]
where $\rho \colon  D_{\epsilon_2}^*N'\to \R$ is a $C^{\infty}$ function with a compact support such that $\rho \equiv 1$ on a neighborhood of $K$.
Note that the Hamiltonian isotopy $(\varphi^t_{H})_{t\in [0,1]}$ for $H$ fixes every point in $L'_K$ since
\[(dH)_x=(d\rho)_xh(q) + \rho(x)  (\pi_N^*(dh)_q) = 0\]
for every $x=(q,p)\in L'_K$.
In addition, $\varphi^1_H$ maps a neighborhood of $K$ in $\Psi^{-1}(L_1)$, which is also a neighborhood of $K$ in the graph of $\eta =dh$, to a neighborhood of $K$ in $N$. 
Finally, we extend on $D^*_rQ$ the Hamiltonian $H\circ \Psi^{-1}\colon \Psi(D_{\epsilon_2}^*N')\to \R$ to be $0$ outside $\Psi(D^*_{\epsilon_2}N')$.
We define $\psi$ as the time-$1$ map of its Hamiltonian isotopy. Then the above observations on $\varphi^1_H$ show that $\psi$ satisfies the two conditions if $\epsilon \in (0,\epsilon'_1]$ is sufficiently small.
\end{proof}

We denote the unit cotangent bundle of $Q$ by $U^*Q \coloneqq \{(q,p)\in T^*Q \mid |p|=1 \}$.
This manifold has a canonical contact form $\alpha_Q\coloneqq \rest{\lambda_Q}{U^*Q}$.

Let $L_1$ be an exact Lagrangian submanifold of $T^*Q$ such that $L_1 \cap \overline{D^*_rQ}$ is compact and $L_1 \setminus D^*_rQ = \{(q,r' p) \mid (q,p)\in \Lambda,\ r' \geq r \}$ for some $r>0$ and a compact Legendrian submanifold $\Lambda$ of $U^*Q$.
Suppose that $Q$ and $L_1$ have a clean intersection along $K= Q \cap L_1$. If
we take $\psi \in \Ham_c(D^*_rQ)$ and $\epsilon\in (0,r]$ of Lemma \ref{lem-neighborhood}, $\psi (L_1)$ is an exact Lagrangian submanifold of $T^*Q$ such that $\psi(L_1) \cap D^*_{\epsilon}Q = L_K \cap D^*_{\epsilon}Q$.
Therefore, we have the embedding
\begin{align}\label{inclusion-clean}
L_K \cap D^*_{\epsilon}Q \to L_1 \colon x \mapsto \psi^{-1}(x).
\end{align}

Let $\Lambda_K\coloneqq L_K \cap U^*Q$ denote the unit conormal bundle of $K$. This is a compact Legendrian submanifold of $U^*Q$.
We identify $T^*Q\setminus Q$ with the symplectization of $U^*Q$ by the diffeomorphism
\begin{align}\label{diffeo-symplectization}
F\colon \R\times U^*Q  \to T^*Q \setminus Q \colon (a, (q,p)) \mapsto (q, e^a p),
\end{align}
for which $F^*\lambda_Q = e^a \alpha_Q$ holds.
Then, we obtain an exact Lagrangian submanifold of $(\R\times U^*Q, e^a \alpha_Q)$
\begin{align}\label{induced-cobordism}
L \coloneqq F^{-1} (\psi (L_1) \setminus K )
\end{align}
satisfying the following conditions for $a_+=\log r$ and $a_-=\log \epsilon$:
\begin{itemize}
\item $L \cap ( [a_-,a_+]\times U^*Q)$ is compact. 
\item $L \cap ( \R_{\geq a_+} \times U^*Q) = \R_{\geq a_+} \times \Lambda$ and
$L \cap ( \R_{\leq a_-} \times U^*Q) = \R_{\leq a_-} \times \Lambda_{K}$.
\end{itemize}
The second condition implies that any primitive function $f\colon L \to \R$ with $df = \rest{(e^a \alpha_Q)}{L}$ satisfies $df=0$ on $( \R_{\geq a_+} \times \Lambda) \sqcup (\R_{\leq a_-} \times \Lambda_K)$.
We should note that on $\R_{\leq a_-} \times \Lambda_K$, $f$ is locally constant, but not necessarily constant.
This is related to the third condition of Definition \ref{def-cobordism}.

%\begin{rem}We will choose a contact form $-\alpha_{Q}$ when $Q=\R^n$ in Subsection \ref{subsec-app} to define the Chekanov-Eliashberg DGAs. The reason will be explained later in Lemma \ref{lem-compare-DGA}. \end{rem}

\subsection{Floer cohomology of Lagrangian submanifolds with clean intersection}\label{subsec-Floer}

We continue to consider a Lagrangian submanifold $L_1$ in $T^*Q$.
To simplify the discussion in this subsection,
we only deal with the case where $Q$ is an oriented closed manifold with $H_1(Q)=0$ or $Q=\R^n$ with a fixed orientation.
Moreover, we assume that the Maslov class of $L_1$ vanishes. For the definition of the Maslov class, see Remark \ref{rem-Maslov} below.
\begin{rem}\label{rem-Maslov}
In general, for a Lagrangian submanifold $L$ of a symplectic manifold $W$, the Maslov index is defined for every homology class in $H_2(W,L)$. See \cite[Definition 13.2.8]{Oh}.
When $2c_1(TW)=0$ and $H_1(W)=0$, for instance $W=T^*Q$, this is reduced to a cohomology class $\mu_L\in H^1(L)\cong \Hom (H_1(L),\Z)$, called the \textit{Maslov class} of $L$.
\end{rem}

\subsubsection{Gradings and orientations of Lagrangian submanifolds in $T^*Q$}

We shall define the notion of \textit{gradings} of Lagrangian submanifolds of $T^*Q$. One can refer to \cite[Section (11j), (12a)]{Seidel}.
Fix a Riemannian metric on $Q$ in order to identify $T_qQ$ with $T^*_qQ$ for every $q \in Q$.
For every $(q,0)\in T^*Q$ in the image of the zero section $Q$, we have an isomorphism between $\R$-vector spaces
\begin{align}\label{isom-zero-sec}
T_{(q,0)}(T^*Q)\cong T_qQ \otimes_{\R}\C
\end{align}
such that the differential $T_qQ \to T_{(q,0)}(T^*Q)$ of the zero section $Q \to T^*Q$ (resp. of the inclusion map $T_q Q \cong T^*_qQ \to T^*Q$ into the fiber of $q$) coincides via (\ref{isom-zero-sec}) with  the $\R$-linear map $T_qQ \to T_qQ \otimes_{\R}\C\colon v \mapsto v$ (resp. $v \mapsto \sqrt{-1}v$).
We choose an almost complex structure $J_0$ on $T^*Q$ such that $d\lambda_Q(\cdot,J_0\cdot)$ is a Riemannian metric and  the isomorphism (\ref{isom-zero-sec}) is a linear map over $\C$.

Since $Q$ is oriented,
we can take a trivialization of the $\C$-line bundle
\[ \beta \colon \textstyle{\bigwedge}^n_{\C}T(T^*Q) \to \C\]
satisfying the following condition: By (\ref{isom-zero-sec}), $\rest{\textstyle{\bigwedge}^n_{\C}T(T^*Q)}{Q} $ is isomorphic to $(\bigwedge^n_{\R}TQ)\otimes_{\R} \C$ when restricted on $Q\subset T^*Q$. We require that $\beta$ coincides on $Q$ with the trivialization
\[ (\textstyle{\bigwedge}^n_{\R}TQ) \otimes_{\R} \C\to \C \]
which maps $(v_1\wedge \dots \wedge v_n)\otimes 1$ to $1\in \C$ for any $q\in Q$ and any positive orthonormal basis $v_1,\dots ,v_n$ of $T_qQ$ with respect to the orientation of $Q$.

Let $\mathcal{L}^{\ori}(T^*Q)$ denote the fiber bundle over $T^*Q$ whose fiber at each $x\in T^*Q$ consists of all oriented Lagrangian subspaces of $T_x(T^*Q)$. 
Then, we define
\[\det \colon \mathcal{L}^{\ori} (T^*Q) \to S^1=\{z\in \C \mid |z|=1\} \]
as follows: For any oriented Lagrangian subspace $V\subset T_{x} (T^*Q)$, we choose its positive basis $v_1,\dots ,v_n$. Then, 
\[ \det (V) \coloneqq \frac{\beta(v_1\wedge \dots \wedge v_n)}{ |\beta(v_1\wedge \dots \wedge v_n)| } \in S^1.\]
It is easy to check that $\det$ is a well-defined continuous map. 
\begin{rem}
We note that for any $V\in \mathcal{L}^{\ori}(T^*Q)$, $(\det(V))^2\in S^1$ is independent of the orientation of $V$. This defines a map  $\det^2\colon \mathcal{L}(T^*Q) \to S^1$, where  $\mathcal{L}(T^*Q)$ denote the fiber bundle over $T^*Q$ whose fiber at each $x\in T^*Q$ consists of all non-oriented Lagrangian subspaces of $T_x(T^*Q)$. This agrees with the \textit{associated squared phase} map in \cite[Section (12a)]{Seidel}.
\end{rem}
\begin{defi} Let $L$ be an orientable Lagrangian submanifold of $T^*Q$.
A $C^{\infty}$ function $\gr \colon L \to \R$ is called a \textit{grading} of $L$ if 
\[  (\det(T_x L))^2 = e^{2\pi \sqrt{-1} \gr(x)} \]
for every $x\in L$. A pair $(L, \gr)$ is called a \textit{graded Lagrangian submanifold} of $T^*Q$.
For any graded Lagrangian submanifold $\tilde{L}=(L,\gr)$ of $T^*Q$, we define its orientation $\mathfrak{o}_{\tilde{L}}$ so that
\[\det (T_xL) = e^{\pi \sqrt{-1} \gr(x)}\]
for every $x\in L$ when $T_xL$ is oriented by $\mathfrak{o}_{\tilde{L}}$.
\end{defi}
The obstruction to the existence of a grading is the Maslov class of $L$. See \cite[Lemma 2.6]{Seidel-gr}.

An isotopy of a graded Lagrangian submanifold is defined as follows:
For any Hamiltonian flow $(\varphi^t_H)_{t\in [0,1]}$ of a Hamiltonian $H$ on $T^*Q$ and any grading $\gr \colon L \to \R$ of $L$, there exists a unique $1$-parameter family of gradings $(\gr_t \colon \varphi^t_H(L) \to \R)_{t\in [0,1]}$ of $\varphi^t_H(L)$ for every $t\in [0,1]$ such that $\gr_0 =\gr$ and the function $L\times [0,1]\to \R\colon (x,t)\mapsto \gr_t( \varphi^t_H(x))$ is continuous.
For the graded Lagrangian submanifold $\tilde{L}= (L,\gr)$, let us use a notation
\[\varphi_H(\tilde{L})  \coloneqq  (\varphi^1_H(L), \gr_1) . \]
We note that the diffeomorphism $\varphi_H \colon L \to \varphi_H(L)$ preserves the orientations $\mathfrak{o}_{\tilde{L}}$ and $\mathfrak{o}_{\varphi_H(\tilde{L})}$.

\begin{ex}\label{ex-gr}
From the definition of $\beta$ on $Q\subset T^*Q$, $\det(T_q Q) = 1$ for every $q\in Q$.
We define the grading of $Q$ by the constant map
\[\gr_Q \colon Q \to \R\colon q \mapsto 0 \]
and denote $(Q,\gr_Q)$ by $\tilde{Q}$.
For the conormal bundle $L_K$ of a submanifold $K$ in $Q$ of codimension $d$,
$\det(T_xL_K)= (\sqrt{-1})^{d}$ for every $x\in K= Q \cap L_K$. Here, we fix the orientation of $L_K$ by orientating a neighborhood of $K$ in $L_K$ as follow: It is oriented via the identification with a tubular neighborhood of $K$ in $Q$ given by the exponential map $T^*Q\cong TQ \to Q\colon (x,v) \mapsto \exp_xv$.
Let 
\[\gr_{L_K}\colon L_{K} \to \R \]
be the grading of $L_K$ such that $\gr_{L_K}(x) = \frac{d}{2}$ for every $x\in Q \cap L_K$ and denote $(L_K,\gr_{L_K})$ by $\tilde{L}_K$.
Then, $\mathfrak{o}_{\tilde{Q}}$ on $Q$ and $\mathfrak{o}_{\tilde{L}_K}$ on $L_K$ agree with the fixed orientations of $Q$ and $L_K$ respectively.
\end{ex}

\subsubsection{Definition of Floer cohomology of graded Lagrangian submanifolds}

As a notation, we set $D\coloneqq \{z\in \C \mid |z|\leq 1\}$ and
\[D_2\coloneqq D\setminus \{1,-1\},\ D_3\coloneqq D\setminus \{1,e^{2\pi\sqrt{-1}/3}, e^{-2\pi\sqrt{-1}/3}\} .\]
Then, $\partial D_m=\bigcup_{i=0}^{m-1}\partial_i D_m$ ($m=2,3$) for
$\partial_i D_m\coloneqq \{e^{2\pi \sqrt{-1}\theta} \mid (i-1)/m<\theta < i/m\}$.

Let us review briefly the construction of the $\Z$-graded Floer cohomology groups
\[HF^*(\tilde{L_1},\tilde{L_1}),\ HF^*(\tilde{Q},\tilde{L_1})\]
for any graded Lagrangian submanifold $\tilde{L_1}=(L_1,\gr)$ with coefficients in $\Z/2$. For the detail, the readers may refer to \cite{F, Seidel-gr, Seidel}.
%, but we remark that these Floer cohomology are \textit{not} the wrapped Floer cohomology.

First, let $J$ be an almost complex structure such that $d\lambda_Q(\cdot , J\cdot)$ is a Riemannian metric  on $T^*Q$ and $\frac{1}{2}d |p|^2\circ J = \lambda_Q$ outside $D^*_{r+1}Q$. When $Q=\R^n$, we additionally assume that $\frac{1}{2}d|q|^2\circ  J = -\sum_{i=1}^n q_i dp_i$ for $q=(q_1,\dots ,q_n)\in \R^n$ when $|q|\in \R_{\geq 0}$ is sufficiently large. These conditions are necessary to prove $C^0$-boundedness for $J$-holomorphic curves in $T^*Q$. See \cite[\S 2.10.3]{GPS}.

We take a Hamiltonian $H$ on $T^*Q$ such that $L_1$ and $\varphi_H(L_1)$ intersect transversely and that it has the form $w|p|$ outside $D_r^*Q$, where $w\in \R_{>0}$ is smaller than the action of any Reeb chord of $\Lambda$. In other words,
\[ 0< w< \inf \{T \in \R_{>0} \mid \text{There exists }x \in \Lambda \text{ such that }\Phi^T(x)\in \Lambda\},\]
where $(\Phi^t)_{t\in \R}$ is the geodesic flow on $U^*Q$. 
Similarly,  we take a Hamiltonian $G$ on $T^*Q$ such that $Q$ and $\varphi_G(L_1)$ intersect transversely and that it has the form $w|p|$ outside $D^*_rQ$ for some $w\in \R_{\geq 0}$. 

In both cases, the degree in $\Z$ of every transversal intersection point
\[\begin{array}{cc} L_1\cap \varphi_H(L_1) \to \Z, & Q \cap \varphi_G(L_1)\to \Z \end{array}\]
is defined as in \cite[Section (12b)]{Seidel} depending on the gradings of $\tilde{L_1}$, $\varphi_H(\tilde{L_1})$, $\tilde{Q}$ and $\varphi_G(\tilde{L_1})$.

Then, $HF^*(\tilde{L_1},\tilde{L_1})$ is the cohomology of a $\Z$-graded cochain complex $CF^*(\tilde{L_1},\varphi_{H}(\tilde{L_1}))$ with coefficients in $\Z/2$ spanned by the set $L_1\cap \varphi^1_H(L_1)$.
The differential on the cochain complex is defined so that the image of any $x\in L_1\cap \varphi_H(L_1)$ is determined by the modulo $2$ count of the number of rigid $J$-holomorphic curves $u\colon D_2\to T^*Q$ such that $u(\partial_0 D_2)\subset L_1$, $u(\partial_1 D_2)\subset \varphi_H(L_1)$ and $\lim_{z\to -1} u(z) =x$ . Here, we need to take generic $J$ so that $(H,J)$ is regular, that is, the moduli spaces of those $J$-holomorphic curves are cut out transversely.
%To be precise, and coincides with the standard one on $T^*\R^n \cong \C^n$ when $Q=\R^n$.

Likewise, $HF^*(\tilde{Q},\tilde{L_1})$ is the cohomology of a $\Z$-graded cochain complex $CF^*(\tilde{Q},\varphi_G(\tilde{L_1}))$ with coefficients in $\Z/2$ spanned by the set $Q\cap \varphi_G(L_1) $. 
When $(G,J)$ is regular, the differential on the cochain complex is defined so that the image of any $x\in Q\cap \varphi_G(L_1)$ is determined by the modulo $2$ count of the number of rigid $J$-holomorphic curves $u\colon D_2\to T^*Q$ such that $u(\partial_0 D_2)\subset Q$, $u(\partial_1 D_2)\subset \varphi_G(L_1)$ and $\lim_{z\to -1} u(z) =x$.

If we shift the grading of $L_1$ by $\sigma\in \Z$, then for $\tilde{L_1}[\sigma]\coloneqq (L_1,\gr +\sigma)$, 
\[HF^*(\tilde{Q}, \tilde{L_1}[\sigma])= HF^{*+\sigma}(\tilde{Q}, \tilde{L_1}).\]
See \cite[Section (12e)]{Seidel}.

%when $L_1$ is the conormal bundle of a submanifold. It is easy to extend to a general $L_1$ considered in the present paper.)

Let us also consider the triangle product.
For  Hamiltonians $H'$ and $H''$ on $T^*Q$, we abbreviate $\varphi_{H'}(\tilde{L_1})$ and $\varphi_{H''}(\tilde{L_1})$ by $\tilde{L}'$ and $\tilde{L}''$ respectively, whose underlying Lagrangian submanifolds are denoted by $L'$ and $L''$. If we take $H'$ and $H''$ properly as above, the cochain complexes $CF^*(\tilde{L}',\tilde{L}'')$, $CF^*(\tilde{Q},\tilde{L}')$ and $CF^*(\tilde{Q},\tilde{L}'')$ are defined.
As defined in \cite[Section (12f)]{Seidel}, we have a chain map
\begin{align}\label{Fukaya-composition}
m_2\colon CF^*(\tilde{L}',\tilde{L}'')\otimes CF^*(\tilde{Q},\tilde{L}')\to CF^*(\tilde{Q}, \tilde{L}'').
\end{align}
It is defined by the modulo $2$ count of the number of rigid $J$-holomorphic curves $v\colon D_3 \to T^*Q$ such that $v(\partial_0 D_3)\subset Q$, $v(\partial_1 D_3)\subset L''$ and $v(\partial_2 D_3)\subset L'$.
Let $\mathcal{M}_{Q,L',L''}(x,x_1,x_2)$ denote the moduli space of such $J$-holomorphic curves with $v(e^{2\pi\sqrt{-1}/3})=x$, $v(e^{-2\pi\sqrt{-1}/3})=x_1$ and $v(1)=x_2$.
Then, $m_2$ induces a map on the cohomology
\begin{align*}
(m_2)^*\colon HF^*(\tilde{L_1},\tilde{L_1}) \otimes HF^*(\tilde{Q} , \tilde{L_1})\to HF^*(\tilde{Q}, \tilde{L_1}).
\end{align*}

For any Hamiltonian $H$ on $T^*Q$ with a compact support, 
there are canonical isomorphisms
\[ \begin{array}{cc} HF^*(\tilde{L_1},\tilde{L_1}) \to HF^*(\varphi_H(\tilde{L_1}),  \varphi_H(\tilde{L_1})), & HF^*(\tilde{Q}, \tilde{L_1}) \to HF^*(\tilde{Q}, \varphi_H(\tilde{L_1})) \end{array}\]
such that the following diagram commutes:
\begin{align}\label{diagram-m2}
\begin{split}
\xymatrix{
HF^*(\tilde{L_1},\tilde{L_1}) \otimes HF^*(\tilde{Q}, \tilde{L_1}) \ar[r]^-{(m_2)^*} \ar[d] &  HF^*(\tilde{Q}, \tilde{L_1}) \ar[d] \\
HF^*(\varphi_H(\tilde{L_1}), \varphi_H(\tilde{L_1})) \otimes HF^*(\tilde{Q}, \varphi_H(\tilde{L_1})) \ar[r]^-{(m_2)^*} &  HF^*(\tilde{Q}, \varphi_H(\tilde{L_1})) .
}
\end{split}
\end{align}
See \cite[Section (12f)]{Seidel} for the isotopy invariance of $(m_2)^*$.

%\begin{lem} Let $L$ be a real vector bundle over $K$ with a fiber metric. We choose $f_i \colon K\to \R$ ($i=1,2$) be a Morse function on $K$. We define $h_i \colon L\to \R\colon (q,p)\mapsto f(q) + w_i |p|^2$ ($w_1,w_2\neq 0$, $w_1\neq w_2$). $u\colon D\to T^*L$\end{lem}

\subsubsection{Floer cohomology of clean Lagrangian intersections and the triangle product}

The next proposition shows that 
if the clean intersection $K$ is connected, then $(m_2)^*$ is determined by the cup product on $K$.
Let $i_K\colon K\to L_1$ denote the inclusion map.
\begin{prop}\label{prop-Floer}
Suppose that $K$ is connected. Then, there exist $\sigma\in \Z$ and isomorphisms of graded $\Z/2$-vector spaces
\begin{align*}
\begin{array}{cc}
I_0\colon HF^*(\tilde{L_1},\tilde{L_1}) \to H^*(L_1;\Z/2),&  I_1, I_2 \colon HF^*(\tilde{Q},\tilde{L_1}) \to H^{*+\sigma}(K;\Z/2)
\end{array}
\end{align*}
such that $I_2\circ (m_2)^*\circ ( I_0\otimes I_1)^{-1} $ coincides with the map
\begin{align*}
H^*(L_1;\Z/2)\otimes H^{*+\sigma}(K;\Z/2) \to H^{*+\sigma}(K;\Z/2)\colon \xi \otimes \eta \mapsto (i_K)^*(\xi) \cup \eta.
\end{align*}
\end{prop}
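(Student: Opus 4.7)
The plan is to adapt Po\'{z}niak's Morse--Bott analysis of clean intersections and then upgrade the identification to include the triangle product $m_2$. By Lemma \ref{lem-neighborhood}, after applying a Hamiltonian diffeomorphism $\psi \in \Ham_c(D^*_r Q)$, we may assume $L_1$ coincides with $L_K$ inside $D^*_{\epsilon}Q$; the Floer cohomologies and $(m_2)^*$ are unaffected because they are Hamiltonian invariants. Now fix $C^2$-small Morse functions $f_1$ on $L_1$ and $f_2$ on $K$. Using a Weinstein neighborhood of $L_1$, extend $f_1$ to a Hamiltonian $H$ so that $L_1 \cap \varphi_H(L_1)$ equals the critical set of $f_1$, transversely. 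Using the local normal form near $K$ supplied by Lemma \ref{lem-linear}, extend $f_2$ to a Hamiltonian $G$ supported in $D^*_\epsilon Q$ so that $Q \cap \varphi_G(L_1)$ equals the critical set of $f_2$, transversely. Outside a compact set, take both $H$ and $G$ to have the required form $w|p|$ with $w > 0$ arbitrarily small.

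For the cohomological isomorphisms, Floer's classical Morse-theoretic trick identifies, in the $C^2$-small limit, $J$-holomorphic strips on $(L_1, \varphi_H(L_1))$ with gradient trajectories of $f_1$ on $L_1$; this gives the desired $I_0 \colon HF^*(L_1, L_1) \to H^*(L_1; \Z/2)$. For $I_1$ and $I_2$, one applies Po\'{z}niak's neighborhood theorem at the clean intersection $K$: it produces a local holomorphic coordinate system in which the Floer strips on $(Q, \varphi_G(L_1))$ degenerate to gradient trajectories of $f_2$ on $K$, yielding $HF^*(Q, L_1) \cong H^*(K; \Z/2)$. Connectedness of $K$ ensures that the associated Morse--Bott spectral sequence has no nontrivial cross terms, and the $C^0$-bounds on Floer trajectories in the non-compact manifold $T^*Q$ come from our asymptotic choices of $J$, $H$ and $G$, as in \cite[\S 2.10]{GPS}. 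The two isomorphisms $I_1$, $I_2$ are obtained from the same recipe applied to the two Hamiltonians underlying $L'$ and $L''$.

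For $(m_2)^*$, I would analyze $\mathcal{M}_{Q, L', L''}(x, x_1, x_2)$ in the same $C^2$-small, Morse--Bott regime. A rigid pseudo-holomorphic triangle should degenerate to a $Y$-shaped tree with three half-infinite gradient flow lines meeting at a single point $p \in K$: one flow line of $f_1$ on $L_1$ emanating from $x_1$, one of the Morse function on $K$ associated to $H'$ emanating from $x_2$, and one of the Morse function on $K$ associated to $H''$ flowing into $x$. Counting such trees gives $(i_K^*(\xi) \cup \eta)(x)$, where the pullback $i_K^*$ appears precisely because the $L_1$-edge of the tree must be evaluated at the matching point $p \in K \subset L_1$, and the cup product is the standard Fukaya--Oh Morse-tree formula on $K$. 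The main obstacle is the rigorous triangle-to-tree gluing: the Weinstein model near $L_1$ and Po\'{z}niak's clean-intersection model near $K$ must mesh along $K$, and one needs a uniform compactness statement ensuring that no triangle breaks along the other pairs of boundary components. Once this identification of moduli spaces is established, the algebraic identity asserted in the proposition follows from the standard Morse-tree calculation of the cup product.
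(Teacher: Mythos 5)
Your overall strategy — identify the three Floer complexes with Morse complexes in a $C^2$-small perturbative regime and then reduce the triangle count to a Morse-theoretic product on $K$ — is the same as the paper's, and the reductions via Lemma \ref{lem-neighborhood} and Po\'{z}niak's local identification are set up correctly. However, the heart of the proposition is precisely the step you acknowledge as ``the main obstacle,'' and your sketch has a genuine gap there.

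First, with your choice of perturbation data (a Morse function $f_1$ on $L_1$, a separate Morse function $f_2$ on $K$), the generators of $CF^*(L', L'')$ include critical points that live on $L_1$ away from $K$, and you give no mechanism for ruling out rigid triangles $u \in \mathcal{M}_{Q, L', L''}(x, x_1, x_2)$ with $x$ one of those generators. Without such a mechanism, the chain map $m_2$ would pick up extra contributions that have nothing to do with $i_K^*$. The paper handles this by choosing \emph{both} small Morse functions $f_1, f_2$ on $K$, extending them to Morse functions $h_1, h_2$ on $L_1$ subject to a quantitative constraint ($2 \sup|f_2| + \sup|f_1| < \min\{r, \epsilon^2/4\}$ and $(2h_2 - h_1)(q,p) \geq 2\sup|f_2| + \sup|f_1|$ for $|p| \geq \epsilon$), and then running an energy estimate: for any triangle whose $L' \cap L''$-corner $x_0$ lies outside $D^*_\epsilon Q \cap L_K$, the symplectic area is forced to be strictly negative, so the moduli space is empty. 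This energy argument is the only reason the restriction $i_K^*$ appears cleanly; your explanation (``the $L_1$-edge must be evaluated at a matching point in $K$'') describes the desired conclusion, not a proof of it.

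Second, the degeneration to a ``Y-shaped tree of gradient flow lines'' is not what the paper establishes, and I don't think it is what you want to establish either. The paper instead localizes the actual $J$-holomorphic triangles: first (adapting Po\'{z}niak's Lemma 3.4.12 from strips to triangles) it shows that for small $w$ the image of every triangle lies in $\Psi(D^*_{\epsilon'}N)$ near $K$; then it stabilizes $L_K$ by a complementary bundle to embed $T^*N$ in $T^*K \times \C^d$, and the $\C^d$-component of each triangle is shown to be a constant by a standard boundary-condition argument. After that, the triangles are honest holomorphic triangles in $T^*K$ for Hamiltonian perturbations of $K$, and the Fukaya–Oh theorem applies directly. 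Your proposal would still need to carry out both of these steps — the localization to a neighborhood of $K$ in $L_1$, and the further reduction into $T^*K$ — before any Morse-tree interpretation is available, and the energy argument above is a prerequisite for the localization step. In short: the ideas are pointed in the right direction, but the specific structure of the perturbation (both $f_1, f_2$ on $K$, with the $C^h_0/C^l_0$ split and the energy inequality) and the two-stage localization are the real content, and they are missing.
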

\begin{proof}
Outside $D_r^*Q$,
we use the following identifications through the diffeomorphisms (\ref{diffeo-symplectization})
\[
\begin{array}{cc} \R_{\geq a_+} \times U^*Q \cong T^*Q\setminus D_r^*Q ,&  \R_{\geq a_+}\times \Lambda \cong L_1\setminus D_r^*Q , \end{array}\]
for $ e^{a_+}=r$.
By Lemma \ref{lem-neighborhood}, we may assume that $L_1 \cap D_{\epsilon}^*Q = L_K \cap D_{\epsilon}^*Q$ for $\epsilon\in (0, r]$. 
%Let $N$ denote this open neighborhood of $K$ in $L_1$.

The existence of the isomorphisms is proved similarly to the case of closed symplectic manifolds \cite{F, P}.
We choose two Morse functions $f_i$ ($i=1,2$) on $K$ such that
\[2  \sup |f_2| + \sup |f_1| < \min \{r, \epsilon^2/4 \} ,\]
and $2f_2-f_1$ is also a Morse function on $K$.
For $i=1,2$, let $h_i \colon L_1\to \R$ be a Morse function on $L_1$ such that
\begin{align}\label{ineq-Morse}
\begin{split}
&h_i(q,p) = \begin{cases} |p| & \text{ if }|p|\geq r , \\ f_i(q) +|p|^2/2 & \text{ if } 0\leq |p|<\epsilon, \end{cases}\\
&(2h_2-h_1)(q,p)\geq 2 \sup |f_2| +\sup |f_1|\  \text{ if }|p|\geq \epsilon .
\end{split}
\end{align}
We also choose a generic Riemannian metric $g_K$ on $K$ such that the gradient vector fields for $f_1,f_2$ and $2f_2-f_1$ satisfy the Morse-Smale condition. Let $g$ be a Riemannian metric on $L_1$ such that $\rest{g}{K}=g_K$, every fiber of $L_K$ is orthogonal to $K$, and $K$ is a totally geodesic submanifold of $L_1$ (see \cite[Subsection 3.4.2]{P}).
We also require that $g= da \otimes da + g_{\Lambda}$ on $\R_{\geq a_+}\times \Lambda$ for some Riemannian metric $g_{\Lambda}$ on $\Lambda$ and that the gradient vector field for $2h_2-h_1$ satisfies the Morse-Smale condition.

We take a $1$-jet neighborhood of $\Lambda$ in $U^*Q$ via a map $\Psi_{\Lambda}$ from a neighborhood of the zero section in $J^1\Lambda= T^*\Lambda\times\R$ to $U^*Q$.
We then take a Weinstein neighborhood of $L_1$ in $T^*Q$ via a map $\Psi \colon D^*_{\epsilon'}L_1\to T^*Q$ for some $\epsilon'>0$ so that outside $D^*_rQ$,
\[\Psi ((a,z), (x,\xi))= (a, \Psi_{\Lambda}((x,\xi), z)) \in \R_{\geq a_+}\times U^*Q\]
for every $((a,z),(x,\xi)) \in T^*\R_{\geq a_+} \times T^*\Lambda$.
By Lemma \ref{lem-neighborhood}, we may also assume that $\Psi^{-1}(Q)$ coincides with the conormal bundle of $K$ in $D^*_{\epsilon'} L_1$.

%Note that $\Psi^*\lambda_Q- \lambda_{L_1}$ is exact and vanishes on $ L'_K \cap D^*_{\epsilon'}L_1$.
Let $\pi_{L_1}\colon D^*_{\epsilon'}L_1\to L_1$ be the bundle projection.
For $i=1,2$, we take a time-independent Hamiltonian $H_i\colon T^*Q\to \R$ so that $H_i(q,p)=|p|$ outside $D_r^*Q$ and $H_i\circ \Psi= h_i \circ \pi_{L_1}$.
We take an almost complex structure $J$ on $T^*Q$ to define Floer cohomology. We additionally require that on $\Psi^{-1}(D^*_rQ)$, $\Psi^*J$ is equal to an almost complex structure on $T^*L_1$ specified from the Riemannian metric $g$ as described in \cite[Subsection 3.4.2]{P}.

Let $C_i$ ($i=1,2$) and $C^l_0$ be the set of critical points of $f_i$ and $2 f_2-f_1$ respectively. 
In addition, let $C^h_0$ be the set of critical points of $2h_2-h_1$ outside $D^*_{\epsilon}Q\cap L_K$.
If $w$ is sufficiently small,
$Q\cap \varphi_{wH_1}(L_1) = C_1$, $Q\cap \varphi_{2wH_2}(L_1) = C_2$ and $L_1\cap \varphi_{w (2H_2-  H_1)}(L_1)= C^l_0 \sqcup C^h_0$.
Therefore,
\begin{align*}
&CF^*(\tilde{Q},\varphi_{w H_1} (\tilde{L_1})) = \bigoplus_{x\in C_1} (\Z/2)x, \ CF^*(\tilde{Q},\varphi_{2w H_2} (\tilde{L_1})) = \bigoplus_{x\in C_2} (\Z/2)x, \\
&CF^*(\varphi_{w H_1}(\tilde{L_1}), \varphi_{2w H_2}(\tilde{L_1})) \cong CF^*( \tilde{L_1},\varphi_{w (2H_2-H_1)}(\tilde{L_1}))=  \bigoplus_{x\in C^l_0\sqcup C^h_0} (\Z/2)x. 
\end{align*}
From the assumption that $K$ is connected, \cite[Theorem 3.4.11]{P} shows the following: If $w>0$ is sufficiently small, then $(wH_1,J)$ and $(2wH_2,J)$ are regular and their Floer cochain complexes coincide with the Morse cochain complexes for $(f_1,g_K)$ and $(f_2,g_K)$ on $K$ respectively up to shift of degrees. 
%In both cases, we need to shift the degrees by some integers $\sigma_1$ and $\sigma_2$ respectively.

Likewise,  if $w>0$ is sufficiently small, $(w(2H_2-H_1),J)$ is regular and its Floer cochain complex coincides with the Morse cochain complex for $(2h_2-h_1,g)$ on $L_1$ up to shift of degrees. 
%Let $\sigma_0\in \Z$ be the difference of degrees.

%
Since these Morse cohomology groups are isomorphic to the singular cohomology of $L_1$ and $K$, we obtain isomorphisms of graded $\Z/2$-vector spaces
\begin{align*}
I_i\colon HF^*(\tilde{Q},\tilde{L_1}) \to H^{*+\sigma_i}(K;\Z/2), \ 
I_0\colon HF^*(\tilde{L_1},\tilde{L_1}) \to H^{*+\sigma_0}(L_1;\Z/2).
\end{align*}
from $(wH_i,J)$ ($i=1,2$) and $(w(2H_2-H_1),J)$. Here, $\sigma_0,\sigma_1,\sigma_2\in \Z$ are the differences of degrees.
Since $I_1\circ I_2^{-1} \colon H^{*+\sigma_1}(K) \to H^{*+\sigma_2}(K)$ is an isomorphism between graded $\Z/2$-vector spaces and $K$ is a closed manifold, we have $\sigma_1=\sigma_2$.
We set $\sigma\coloneqq \sigma_1=\sigma_2$.
In addition, $\sigma_0=0$ since $I_0$ agrees with the \textit{PSS isomorphism} over $\Z/2$ in \cite[Section (12e)]{Seidel}.
%By a standard arguments on Floer cohomology and Morse cohomology, we can prove that $I_1=I_2$, so we denote it by $I$.

We also note that the projection
\[ \operatorname{pr} \colon CF^*(\varphi_{wH_1}(\tilde{L_1}),\varphi_{2wH_2} (\tilde{L_1})) \to \bigoplus_{x\in C^l_0} (\Z/2)x \]
is a chain map to the Morse cochain complex for $(2f_2-f_1,g_K)$. This is because the inequality
\[(2h_2-h_1)(x)\geq (2h_2-h_1)(x')= (2f_2-f_1)(x')\]
holds by (\ref{ineq-Morse}) for every $x\in C^h_0$ and $x'\in C^l_0$. Moreover, it is a standard fact that $\operatorname{pr}$ induces $(i_K)^*\colon H^*(L_1;\Z/2)\to H^*(K;\Z/2)$.

Next, we compute the triangle product $(m_2)^*$. By its definition, it comes from (\ref{Fukaya-composition})
for $L'\coloneqq \varphi_{wH_1}(L_1)$ and $L''\coloneqq \varphi_{2wH_2}(L_2)$.
%Putting aside the transversality, let us observe the pseudo-holomorphic curves.
 %
We choose arbitrarily $x\in L'\cap L''$, $x_1\in Q\cap L'=C_1$ and $x_2\in Q\cap L''=C_2$. We denote $x_0 \coloneqq \varphi_{wH_1}^{-1}(x) \in C^l_0\cup C^h_0$. For every $u\in \mathcal{M}_{(Q,L',L'')} (x,x_1,x_2)$, 
\begin{align*}
0\leq \int_{D_3}u^*(d\lambda_Q) 
= \int_{\partial D_3} (\Psi^{-1}\circ \rest{u}{\partial D_3})^*\lambda_{L_1} 
= (wh_1(x_0)- wh_1(x_1)) + ( 2wh_2(x_2)- 2wh_2(x_0)).
\end{align*}
If $x_0\in C^h_0$, the conditions (\ref{ineq-Morse}) on $f_i$ and $h_i$ for $i=1,2$ imply that
\[(h_1(x_0)- h_1(x_1)) + ( 2h_2(x_2)- 2h_2(x_0)) = (2f_2(x_2)-f_1(x_1)) - (2h_2-h_1)(x_0) <0, \]
hence $\mathcal{M}_{Q,L',L''}(x,x_1,x_2)$ is the empty set.

Therefore, we only need to observe the case $x_0\in C^l_0$. Let us set $N\coloneqq L_1\cap D^*_{\epsilon}Q$.
As in \cite[Lemma 3.4.12]{P}, we can prove the following: There exists $w_0>0$ such that
if $0<w<w_0$, then for every $u\in \mathcal{M}_{Q,L',L''}(x,x_1,x_2)$, $\Im u$ is contained in $\Psi( D^*_{\epsilon'} N )$. %which is a neighborhood of $K$ in $T^*Q$. 
It should be remarked that \cite[Lemma 3.4.12]{P} is a result about pseudo-holomorphic curves on $D_2$, but we can easily modify its proof to those on $D_3$.

The Riemannian metrics $g_K$ and $g$ give isomorphisms $T^*K\cong TK$ and $T^*N \cong TN$, and thus
$T^*K$ is symplectically embedded in $T^*N$.
We shall prove the following claim: For every $u\in \mathcal{M}_{Q,L',L''}(x,x_1,x_2)$, $\Psi^{-1}(\Im u)$ is contained in $T^*K$.
To prove this, we may assume that the conormal bundle $L_K$ is trivial. If not, we choose another vector bundle $V$ over $K$ so that $L_K\oplus V\cong K\times \R^d$, and consider $T^*N$ as a symplectic submanifold of $T^*K\times \C^d$. Then $\Psi^{-1}\circ u$ can be seen as a pseudo-holomorphic curve in $T^*K\times \C^d$ such that
\begin{align*}
&u(\partial_0 D_3)\subset K\times \sqrt{-1}\R^d,\\
&u(\partial_1 D_3)\subset \Graph d (-2wf_2)\times (1+2w\sqrt{-1})\R^d, \\
&u(\partial_2 D_3)\subset   \Graph d (-wf_1)\times (1+w\sqrt{-1})\R^d.
\end{align*}
Here, we let $\Graph \eta$ denote the graph in $T^*K$ for any $1$-form $\eta$ on $K$. 
The $\C^d$-component of $\Psi^{-1}\circ u$ is a holomorphic curve in $\C^d$ such that $\partial_i D_3$ for $i=0$, $1$ and $2$ are mapped to $\sqrt{-1}\R^d$, $(1+w\sqrt{-1})\R^d$ and $ (1+2w\sqrt{-1})\R^d$ respectively. Clearly, this is a constant map to $0\in \C^d$.
Therefore, $\Psi^{-1}\circ u$ lies in $T^*K\times \{0\}$.

Let us abbreviate $\Graph d(-wf_1)$ by $K'$ and $\Graph d(-2wf_2)$ by $K''$.
We have $\tilde{K}=(K,\gr_K)$ from Example \ref{ex-gr}. Fix the gradings of $K'$ and $K''$ by $\tilde{K}' \coloneqq \varphi_{wf_1\circ \pi_K}(\tilde{K})$ and $\tilde{K}'' \coloneqq \varphi_{2w f_2\circ \pi_K} (\tilde{K})$ respectively, where $\pi_K\colon T^*K\to K$ is the bundle projection.
For an almost complex structure on $T^*K$ specified from $g_K$ as in  \cite[Subsection 3.2.4]{P},
we define Floer cochain complexes $CF^*(\tilde{K},\tilde{K}')$, $CF^*(\tilde{K},\tilde{K}'')$ and $CF^*(\tilde{K}',\tilde{K}'')$. They coincide with Morse cochain complexes for $(f_1,g_K)$, $(f_2,g_K)$ and $(2f_2-f_1,g_K)$ respectively. Similar to $\mathcal{M}_{Q,L',L''}(x,x_1,x_2)$, we define $\mathcal{M}_{(K,K',K'')}(x,x_1,x_2)$ to be the moduli space of pseudo-holomorphic curves on $D_3$ in $T^*K$ with the boundary conditions given by $K$, $K'$ and $K''$.
By the claim we have shown above, if $w$ is sufficiently small, then
\[\mathcal{M}_{K,K',K''}(x,x_1,x_2)\to \mathcal{M}_{Q,L',L''}(x,x_1,x_2) \colon v'\mapsto \Psi\circ v'\]
is a bijection.

We now conclude that the following diagram commutes:
\[\xymatrix{
CF^*(\tilde{L}', \tilde{L}'')  \otimes  CF^*(\tilde{Q}, \tilde{L}')  \ar[d]_-{\operatorname{pr} \otimes \id }   \ar[r]^-{m_2} & CF^*(\tilde{Q},\tilde{L}'') \ar@{=}[d] \\
CF^*(\tilde{K}',\tilde{K}'')  \otimes  CF^{*+\sigma}(\tilde{K}, \tilde{K}') \ar[r]^-{m'_2} & CF^{*+\sigma}(\tilde{K},\tilde{K}'') .
}\]
The lower map $m'_2$ is the chain map such that
the coefficient of $x_2$ in $m'_2(x\otimes x_1)$ is the number of points of $\mathcal{M}_{K,K',K''}(x,x_1,x_2)$ modulo $2$. It is well-known that $m'_2$ induces the cup product on $HF^*(\tilde{K},\tilde{K})\cong H^*(K;\Z/2)$. For the proof, see \cite[Part I]{FO}. Since $\operatorname{pr}$ induces $(i_K)^*$ on the cohomology, the proposition is proved.
\end{proof}

\begin{rem}\label{rem-shift-degree}
In the case where $L_1\cap D^*_{\epsilon}Q=L_K$ and $L_1$ is connected, we have a unique grading $\gr$ of $L_1$ such that $\gr (x) = \gr_{L_K}(x)$ for every $x\in K=Q \cap L_K$.
For the grading $\gr_{L_K}$, see Example \ref{ex-gr}. 
For $\tilde{L_1}=(L_1,\gr)$ with this grading, let $\sigma_K\in \Z$ be the shift of degree such that the isomorphisms
\[I_1 , I_2 \colon HF^*(\tilde{Q}, \tilde{L_1})\to H^{*+\sigma_K}(K)\]
preserve the $\Z$-gradings. From the proof of Proposition \ref{prop-Floer}, $\sigma_K$ is determined by local data around $K$. Namely, if $N$ is a tubular neighborhood of $K$ in $Q$, then $\sigma_K$ is computed by replacing the triple $(T^*Q,\tilde{Q},\tilde{L_1})$ by $(T^*N,\tilde{N},\tilde{L}_K)$. Though, in fact,  we can compute that $\sigma_K=0$, this property of $\sigma_K$ is sufficient for our purpose.
\end{rem}

\begin{rem}\label{rem-Z-coefficient}
It is standard to prove that $I_1=I_2$ by taking a homotopy between $f_1$ and $f_2$. In addition,
the author expects that the above results can be extended to $\Z$-coefficients in the following case:  $Q$ and $L_1$ admit brane structures (see \cite[Section (12a)]{Seidel}) and the normal bundle of $K$ in $Q$ is trivial. 
%Then, a neighborhood of $K$ in $T^*Q$ is identified with $U_{\delta} \coloneqq D^*_{\delta}K\times \{z\in \C^d\mid |z|<\delta\}$ for a small $\delta>0$ so that $Q$ and $L_1$ coincides near $K$ with $(K\times \R^d)\cap U_{\delta}$ and $(K\times \sqrt{-1}\R^d)\cap U_{\delta}$ respectively. As Lagrangian submanifolds of $\C^d$, $\R^d$ and $\sqrt{-1}\R^d$ admit brane structures with trivial Pin structures. We then associate to $K$ two brane structures: One is determined so that the product with $\R^d$ coincides in $U_{\delta}$ with $Q$ as Lagrangian branes. 
%The other one is determined so that the product with $\sqrt{-1}\R^d$ coincides in $U_{\delta}$ with $L_1$ as Lagrangian branes. The author expects that Proposition \ref{prop-Floer} can be extended, by taking the orientations of the moduli spaces into account, to a result in $\Z$-coefficients if the two brane structures on $K$ agree up to the shift operation of \cite[Section (11k)]{Seidel}. 
However, the Proposition \ref{prop-Floer} for $\Z/2$-coefficient cohomology is sufficient to prove the main results, so we do not pursue such extensions in this paper.
\end{rem}

\section{SFT for Lagrangian cobordisms}\label{sec-DGA}

In this section, assuming several conditions, we introduce the Chekanov-Eliashberg DGA of a Legendrian submanifold $\Lambda$ in a contact manifold with coefficients in the group ring $\Z[H_1(\Lambda)]$. We refer to \cite{DR, EES-R, EES-ori, EES}.
We then define a DGA map which is associated to an exact Lagrangian cobordism between two Legendrain submanifolds.
%This DGA map has been studied in \cite{E, EHK} and other papers over $\Z/2$ and in \cite{K} over $\Z$. 
%To define the DGA map over the integral group ring, we will discuss the orientations of the moduli spaces.
Some technical details, especially on the orientations of moduli spaces, are left to Appendix \ref{sec-proof}.
%\begin{rem} The lift of the DGA maps over $\Z/2$ to the ones over $\Z$ was done in \cite{K} by proving the coherence of orientations of the moduli spaces.
%We reprove this result using the conventions of signs of \cite{EES-ori} (see Subsection \ref{subsec-convention}), which are different from those of \cite{K}. This is because we will later apply results about the DGAs in \cite{EENS}, which refers to \cite{EES-ori} to fix orientations of moduli spaces in \cite[Section 6]{EENS}. \end{rem}

\subsection{Setup}\label{subsec-setup}

Let $(P,\lambda)$ be a $2n$-dimensional Liouville manifold with a Liouville form $\lambda$.
We equip $P\times \R$ with a contact structure by a contact form  $dz - \lambda$, where $z$ is the coordinate on $\R$.
%The contact manifold $(P\times \R, dz-\lambda)$, where $z$ is the coordinate of $\R$, is called the contactization of $(P,\lambda)$.
To simplify the discussion on the grading of the DGA, we assume that $2c_1(TP)=0$ and $H_1(P)=0$. These conditions are satisfied in the case of $P=T^*S^2$ in Section \ref{sec-knot}.
Let $\pi_P\colon P\times \R\to P$ denote the projection.
%(The first condition is necessary to define $\Z$-graded DGAs. The second condition can be relaxed to that $H_1(P)$ is free, in which case the grading depends on the choice of chapping paths explained later.)
%
Suppose that $\Lambda$ is a compact Legendrian submanifold of $P\times \R$ satisfying the following topological conditions:
\begin{itemize}
\item $\Lambda$ is a connected and oriented manifold. In addition, it admits a spin structure.
\item  
The Maslov class of the immersed Lagrangian submanifold $\pi_P(\Lambda)$ of $P$ vanishes. More concretely, it is described as follows:
For every loop $\gamma \colon S^1 \to \Lambda$,
we choose a map $\tilde{\gamma}\colon \Sigma \to P$ on an oriented surface $\Sigma$ whose restriction on the boundary gives $\pi_P\circ \gamma$, and take a symplectic trivialization $\Psi \colon \tilde{\gamma}^* TP \to \C^n$. Then, the Maslov index of the loop $(\Psi ( (\pi_P)_*(T_{\gamma(t)} \Lambda)))_{t\in S^1}$ of Lagrangian subspaces in $\C^n$ is $0$.
\end{itemize}
Here, the Maslov index of the loop does not depend on the choice of $\tilde{\gamma}$ and $\Psi$ since $2c_1(TP)=0$. See \cite[Subsection 2.2]{EES}.
We also assume that
$\rest{\pi_P}{\Lambda} \colon \Lambda \to P$ is an immersion such that the self intersection of $\pi_P(\Lambda)$ consists only of transverse double points. This condition is satisfied for generic $\Lambda$.

Let us give several definitions related to symplectic field theory in $\R\times (P\times \R)$. We refer to \cite{DR}, but see Remark \ref{rem-difference-of-convention} below about some differences of conventions.
We denote the set of Reeb chords of $\Lambda$ by
\[\mathcal{R}(\Lambda)\coloneqq \{a \colon [0,T] \to P\times \R\mid T>0,\ \textstyle{\frac{d a}{dt}} (t)= (\partial_z)_{a(t)} \text{ for every }t\in [0,T]\text{ and }a(0),a(T)\in \Lambda\}.\]
For every $(a\colon [0,T]\to P\times \R) \in \mathcal{R}(\Lambda)$, we fix a path $\gamma_a \colon [0,1] \to \Lambda$ from $a(0)$ to $a(T)$, which we call a \textit{capping path} of $a$.
Then, we can define for every $a\in \mathcal{R}(\Lambda)$ a loop of Lagrangian subspaces in $\C^{n}$ as follows: %\cite[Subsection 2.2]{EES}. 
We choose a symplectic trivialization of $TP$ along the loop $\pi_P\circ \gamma$ which extends to an oriented surface bounding $\pi_P\circ \gamma$. Let $\Psi\colon (\pi_P\circ \gamma)^* TP \to \C^n$ denote this trivialization. Then, we have a path of Lagrangian subspaces $(\Psi((\pi_P)_*(T_{\gamma(t)}\Lambda)))_{t\in [0,1]}$ in $\C^{n}$. We close it up by the positive rotation as described in \cite[Subsection 2.2]{EES-R}. 
When the Maslov index of this loop is $\nu(a)\in \Z$, we define $|a|\coloneqq \nu(a) -1$.
The independence of $|a|$ of the choice of $\Psi$ and $\gamma_a$ follows from the assumption that $2c_1(TP)=0$ and that the Maslov class of $\pi_P(\Lambda)$ vanishes.

Next,
we choose an almost complex structure $J$ on $P$ which is compatible with $d\lambda$. Namely, $d\lambda(\cdot , J\cdot)$ is a Riemannian metric on $P$. Moreover, in the sense of the definitions in \cite[Subsection 2.3]{EES}, we assume that $J$ is \textit{adapted} to $\Lambda$ and $\Lambda$ is \textit{admissible}.
We associate an almost complex structure $\tilde{J}$ on the symplectization $\R\times (P\times \R)$ as follows:
On the contact distribution $\xi \coloneqq \ker (dz- \lambda)$, $\rest{d\pi}{\xi} \colon \xi \to TP$ is a bundle isomorphism, so we define $\tilde{J}$ on $T_{(a,(p,z))}(\R\times (P\times \R))= \R\partial_a \oplus \xi_{(p,z)} \oplus \R\partial_z$ by
%$\tilde{J} (k,(v,l)) \coloneqq (-l,(0, k)) + (0, (\rest{d\pi}{\xi})^{-1} (Jv))$.
\begin{align}\label{alm-cpx-str}
\tilde{J} = \begin{pmatrix} 0 & 0 & 1 \\ 0 & (\rest{d\pi}{\xi})^*J & 0 \\ -1 & 0 & 0 \end{pmatrix}.
\end{align}
Then, $\tilde{J}$ is compatible with $d(e^a(-dz+\lambda))$ and maps $\partial_z$ to $\partial_a$. Moreover,
it is invariant by the following translation for every $s\in \R$:
\[\tau_s\colon \R\times (P\times \R)\to  \R\times (P\times \R)\colon (a,(p,z)) \mapsto (a+s, (p,z)).\]

\begin{rem}\label{rem-difference-of-convention}
In \cite{DR}, $P\times \R$ is equipped with the contact form $dz+\lambda$, so there are differences in some definitions from the present paper. More concretely, the capping path in \cite[Subsection 4.1]{DR} for each Reeb chord $a\colon [0,T] \to P\times \R$ of $\Lambda$ is a path in $\Lambda$ from $a(T)$ to $a(0)$ and the almost complex structure $\tilde{J}$ on $\R\times (P\times \R)$ in \cite[Subsection 3.4]{DR} maps $\partial_a $ to $\partial_z$.
\end{rem}

For the unit disk $D$ and $m\in \Z_{\geq 0}$, fix $p_0,p_1,\dots ,p_m\in \partial D$ in counterclockwise order and let $D_{m+1}\coloneqq D\setminus \{p_0,p_1,\dots ,p_m\}$.
Let $\mathcal{C}_{m+1}$ be the space of conformal structures on $D_{m+1}$ which can be smoothly extended to the disk $D$.
Given $\kappa\in \mathcal{C}_{m+1}$, we define $j_{\kappa}$ to be the associated complex structure on $D_{m+1}$. There are holomorphic strip coordinates
\[\psi_0\colon [0,\infty) \times [0,1]\to D_{m+1},\ \psi_k\colon (-\infty,0]\times [0,1]\to D_{m+1} \]
near $p_0$ and $p_k$ for $k=1,\dots ,m$ respectively. Here, the domains of $\psi_0$ and $\psi_k$ are identified with subspaces of $\C$ by a map $\R\times [0,1] \to \C \colon (s,t)\mapsto s+ \sqrt{-1} t$.
For $m=0,1$, $\mathcal{C}_{m+1}$ consists of one point, and we define $\mathcal{A}_{m+1}$ to be the group of automorphisms on $D_{m+1}$.
We also define $\overline{\partial D_{m+1}}$ to be a compactification of $\partial D_{m+1}$ obtained by adding $\psi_0(\{+\infty\}\times\{0,1\})$ and $\psi_k(\{-\infty\}\times\{0,1\})$ for $k=1,\dots ,m$. The orientation of $\overline{\partial D_{m+1}}$ is induced by the orientation of $\partial D$ as the boundary of $D$.

\subsection{Pseudo-holomorphic curves with boundary in an exact Lagrangian cobordism}\label{subsec-J-hol}

We start with the definition of an exact Lagrangian cobordism in the symplectization of $P\times \R$.
\begin{defi}\label{def-cobordism}
Let $L$ be an exact Lagrangian submanifold of $(\R\times (P\times \R) , e^a (-dz+\lambda))$.
For two compact Legendrian submanifolds $\Lambda_+$ and $\Lambda_-$ of $P\times \R$, $L$ is called an \textit{exact Lagrangian cobordism} from $\Lambda_-$ to $\Lambda_+$, if there exists $a_-,a_+\in \R$ with $a_-<a_+$ such that the following hold:
\begin{enumerate}
\item $L\cap ( [a_-,a_+]\times (P\times \R))$ is compact.
\item $L\cap (\R_{\geq a_+}\times (P\times \R)) =\R_{\geq a_+}\times \Lambda_+ $ and $L\cap (\R_{\leq a_-} \times (P\times \R)) = \R_{\leq a_-} \times \Lambda_- $.
\item There exists a $C^{\infty}$ function $f\colon L\to \R$ such that $\rest{e^a(-dz+\lambda)}{L}=df $ and $f$ is constant on  $\R_{\leq a_-} \times \Lambda_-$.
\end{enumerate}

\end{defi}
%We note that when $\Lambda_+=\Lambda_-=\Lambda$, $L=\R\times \Lambda$ is a trivial exact Lagrangian cobordism.

%\begin{rem} The necessity of the third condition to define the DGA maps is noted in \cite{C} with examples. \end{rem}

Let us introduce some notations.
For a compact subset $\bar{L}\coloneqq L \cap ([a_-,a_+]\times (P\times \R))$ of $L$, we define a strong deformation retract  $r_{\bar{L}}\colon L\to \bar{L}$ so that $r_{\bar{L}}(a,x)=(a_+, x)$ if $a\geq a_+$ and $r_{\bar{L}}(a,x)=(a_-, x)$ if $a\leq a_-$.
We also define embeddings
\begin{align*}
i_{+}\colon \Lambda_{+}\to L \colon x\to (a_{+},x), \ 
i_{-}\colon  \Lambda_{-}\to L \colon x\to (a_{-},x) .
\end{align*}

With these preparations,
we define a moduli space of $\tilde{J}$-holomorphic curves in $\R\times (P\times \R)$
for every $B\in H_1(L)$ and for Reeb chords $(a\colon [0,T]\to P\times \R)\in \mathcal{R}(\Lambda_+)$ and $(b_k\colon [0,T_k]\to P\times \R) \in \mathcal{R}(\Lambda_-)$ for $k=1,\dots ,m$.
Referring to  \cite[Section 4.2.2]{DR}, we first define a space
\begin{align}\label{moduli-hat}
\hat{\mathcal{M}}_{L,B}(a;b_1,\dots ,b_m)
\end{align}
which consists of pairs $(u,\kappa)$ of $\kappa\in \mathcal{C}_{m+1}$ and a pseudo-holomorphic curve
\[u\colon D_{m+1} \to \R\times (P\times \R) \]
with respect to $\tilde{J}$ and $j_{\kappa}$
satisfying the following conditions:
\begin{itemize}
\item $u(\partial D_{m+1} ) \subset L$.
\item There exist $s_0, s_1,\dots ,s_m\in \R$ such that
$u\circ \psi_0(s,t)$ is asymptotic to $(Ts +s_0 , a (T(1- t)))$ if $s\to \infty$ and
% $\lim_{s\to +\infty} (\tau_{-Ts}\circ u\circ \psi_0)(s,t) = (s_0, a(Tt))$ holds.
$u\circ \psi_k(s,t)$ is asymptotic to $(T_ks +s_k , b_k(T_k(1- t)))$ for $k=1,\dots ,m$  if $s\to -\infty$. 
%$\lim_{s\to -\infty} (\tau_{-T_ks}\circ u\circ \psi_k)(s,t) = (s_k, a(T_kt))$ holds.
\item We extend 
$r_{\bar{L}}\circ \rest{u}{\partial D_{m+1}}$ continuously to $\overline{\partial D_{m+1}}$. Let $\partial u$ denote this $1$-chain in $L$. Then,
\[ \left[ \partial u +(i_-\circ  \gamma_{b_1} + \dots +i_-\circ \gamma_{b_m}) - i_+\circ \gamma_{a} ) \right] = B \in H_1(L).\]
\end{itemize}
When $m=0,1$, the group $\mathcal{A}_{m+1}$ acts on this space by $(u,\kappa)\cdot \varphi\coloneqq (u\circ \varphi, \kappa)$ for all $\varphi\in \mathcal{A}_{m+1}$. The moduli space of $\tilde{J}$-holomorphic curves up to conformal equivalence is defined by
\begin{align}\label{moduli-cobordism}
\mathcal{M}_{L,B}(a;b_1,\dots ,b_m) \coloneqq \begin{cases}
 \hat{\mathcal{M}}_{L,B}(a;b_1,\dots ,b_m) & \text{ if }m\geq 2, \\
 \hat{\mathcal{M}}_{L,B}(a;b_1,\dots ,b_m)/\mathcal{A}_{m+1} & \text{ if }m=0,1.
\end{cases}
\end{align}

\subsection{Chekanov-Eliashberg DGA over $\Z[H_1(\Lambda)]$}\label{subsec-CE-DGA}

We consider the moduli space (\ref{moduli-cobordism}) when $\Lambda_+=\Lambda_-=\Lambda$ and $L=\R\times \Lambda$. For each $a\in \mathcal{R}(\Lambda)$, we take the same capping path $\gamma_a$ in $\Lambda_+$ and $\Lambda_-$.
Since $\tau_s^*\tilde{J}=\tilde{J}$ and $\tau_s(\R\times \Lambda)=\R\times \Lambda$ for all $s\in \R$, there is an action of $\R$ on the moduli space defined by $s\cdot (u,\kappa)\coloneqq (\tau_s\circ u,\kappa)$.
For every $A\in H_1(\Lambda)$ and $a,b_1,\dots ,b_m\in \mathcal{R}(\Lambda)$, we define the quotient space
\begin{align}\label{moduli-Legendrian}
\bar{\mathcal{M}}_{\Lambda,A} (a; b_1,\dots ,b_m) \coloneqq \R\backslash \mathcal{M}_{\R\times \Lambda, (i_+)_*A}(a;b_1,\dots ,b_m). 
\end{align}
This moduli space satisfies the following:
$\bar{\mathcal{M}}_{\Lambda,A} (a; b_1,\dots ,b_m) $ is transversely cut out for generic $J$, and its dimension as a smooth manifold is
\[|a|-(|b_1|+\dots +|b_m|)-1.\]
By the Gromov compactness, the union  $\coprod_{A\in H_1(\Lambda)} \bar{\mathcal{M}}_{\Lambda, A}(a;b_1,\dots ,b_m)$ is a compact $0$-dimensional manifold if $|a|-1=|b_1|+\dots +|b_m|$.
For more details, see Appendix \ref{subsec-moduli-P}.

Moreover, if we choose a spin structure $\mathfrak{s}$ on $\Lambda$
and an orientation of the determinant line of the capping operator for each Reeb chord (see Appendix \ref{subsec-cap}), we can specify an orientation of $\coprod_{A\in H_1(\Lambda)} \bar{\mathcal{M}}_{\Lambda, A}(a; b_1,\dots ,b_m)$ for every $a,b_1,\dots ,b_m\in \mathcal{R}(\Lambda)$.
See Definition \ref{def-ori-barM}.
%(We will review this process in Subsection \ref{subsec-compare}, referring to \cite{EES-ori}.)

Now, 
we define $\mathcal{A}_*(\Lambda)$ as the $\Z$-graded free associative non-commutative algebra over $\Z[H_1(\Lambda)]$ generated by $\mathcal{R}(\Lambda)$ so that the degree of every $a\in \mathcal{R}(\Lambda)$ is $|a|$.
In addition, we define the graded derivation
$\partial_{\mathfrak{s}} \colon \mathcal{A}_*(\Lambda) \mapsto \mathcal{A}_{*-1}(\Lambda)$
over $\Z[H_1(\Lambda)]$
so that for every $a\in \mathcal{R}(\Lambda)$
\begin{align}\label{diff-DGA}
 \partial_{\mathfrak{s}} (a) \coloneqq \sum_{|a|-1=|b_1|+\dots +|b_m|} \sum_{A\in H_1(\Lambda)} (-1)^{(n-1)(|a|+1)}\left( \#_{\sign}  \bar{\mathcal{M}}_{\Lambda,A}(a;b_1,\dots ,b_m) \right) \cdot e^A \cdot b_1\cdots b_m, 
\end{align}
and extend it by the Leibniz rule. Here, $\#_{\sign}$ is the number of the points counted with the signs induced by the orientation, and $\{e^A\mid A\in \Z[H_1(\Lambda)]\}$ generates the group ring.
From \cite[Theorem 4.1]{EES-ori}
%, which is a consequence of the Gromov compactness and the coherence of the orientations, 
and our choice of orientations of the moduli spaces in Definition \ref{def-ori-barM}, we get the following result. %See also  \cite[Lemma 2.5]{EES}.
\begin{prop}\label{prop-DGA}
The pair $(\mathcal{A}_*(\Lambda),\partial_{\mathfrak{s}})$ is a differential graded algebra over $\Z[H_1(\Lambda)]$, that is, $\partial_{\mathfrak{s}}\circ \partial_{\mathfrak{s}}=0$.
\end{prop}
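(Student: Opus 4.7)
The plan is to follow the standard strategy for showing $\partial^2=0$ in Legendrian contact homology, as carried out in \cite[\S 2]{EES} and \cite[\S 4]{DR}: identify the boundary of the compactification of $1$-dimensional moduli spaces with the terms of $\partial_\Lambda \circ \partial_\Lambda$, and show they cancel in pairs modulo sign conventions.

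First I would fix $a\in \mathcal{R}(\Lambda)$ and consider, for each choice of $b_1,\dots,b_m \in \mathcal{R}(\Lambda)$ and $A\in H_1(\Lambda)$ with $|a|-2 = |b_1|+\dots+|b_m|$, the moduli space $\bar{\mathcal{M}}_{\Lambda,A}(a;b_1,\dots,b_m)$, which is a smooth $1$-manifold by the transversality result from \cite[Lemma 4.5]{EES} via the diffeomorphism $\Pi_P$. By Gromov-SFT compactness (as in \cite[Theorem 2.1]{DR}), its natural compactification $\overline{\bar{\mathcal{M}}}_{\Lambda,A}(a;b_1,\dots,b_m)$ is a compact $1$-manifold with boundary, and each boundary point corresponds to a $2$-level SFT building: a rigid $\tilde{J}$-holomorphic disk with positive puncture at $a$ and negative punctures at $c, b_{i_1},\dots,b_{i_k}$ glued at $c$ to a rigid disk with positive puncture at $c$ and negative punctures at $b_{j_1},\dots,b_{j_l}$ filling in the slots. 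Since $H_1(P)=0$ and $H_1(\R\times\Lambda)\cong H_1(\Lambda)$, the homology classes add up consistently and the sum over $A$ splits through an intermediate class.

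Next I would pair boundary components with terms of $\partial_\Lambda \circ \partial_\Lambda (a)$. Expanding
\[\partial_\Lambda\circ \partial_\Lambda (a) = \sum_{c,\,(A_1,A_2),\, \mathbf{b},\,\mathbf{b}'} (\text{signs}) \cdot e^{A_1+A_2}\cdot b_1\cdots b_{i-1}\cdot b'_1 \cdots b'_{l} \cdot b_{i+1}\cdots b_m,\]
each summand is in bijection with a pair of rigid curves, and hence with a pair of boundary points of some $1$-dimensional $\overline{\bar{\mathcal{M}}}_{\Lambda,A}(a;\cdot)$. Since the signed count of boundary points of any compact oriented $1$-manifold is zero, it remains to check that the signs induced by the orientations of moduli spaces agree with the signs produced by the Leibniz rule together with the prefactor $(-1)^{(n-1)(|a|+1)}$.

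The hard part will be this sign verification. I expect to invoke \cite[Lemma 2.5]{EES} directly: that lemma is precisely the statement that, under the orientation conventions of \cite{EES-ori} (which I am adopting and which give the specific prefactor $(-1)^{(n-1)(|a|+1)}$ in the definition of $\partial_\Lambda$), the sign associated by coherent orientations to a two-level broken configuration equals the product of the individual signs times the Koszul sign coming from the Leibniz rule applied to $\partial_\Lambda(b_1\cdots b_m)$. Once this is in place, the two boundary points corresponding to the two ways of reading a fixed broken curve (once as a contribution to the outer $\partial_\Lambda$ applied to $a$, once as a contribution produced by the Leibniz rule inside $\partial_\Lambda$) contribute opposite signs in $\partial_\Lambda\circ\partial_\Lambda(a)$, so all terms cancel and $\partial_\Lambda\circ\partial_\Lambda=0$. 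Since the capping-path and spin-structure data enter only through the fixed orientations of the determinant lines of the capping operators, the argument is purely local at the breaking Reeb chord $c$ and the verification reduces to the linear gluing sign computation in \cite{EES-ori}.
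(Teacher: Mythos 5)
Your proposal is correct and follows essentially the same route as the paper, which itself gives no independent argument but simply invokes \cite[Lemma~2.5]{EES} together with the chosen orientation conventions; you unpack exactly what that citation is doing (compactify the $1$-dimensional moduli spaces, identify their boundary with two-level SFT buildings, and match the resulting signed boundary count of zero against the Leibniz expansion of $\partial_\Lambda\circ\partial_\Lambda$ via \cite[Lemma~2.5]{EES}).

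One small conceptual slip in your last paragraph: each two-level broken configuration is a \emph{single} boundary point of the compactified $1$-manifold and contributes exactly \emph{one} term to $\partial_\Lambda\circ\partial_\Lambda(a)$; there is no ``second way of reading'' the same broken curve. The vanishing comes purely from the fact you already stated correctly a paragraph earlier, namely that the total signed count of boundary points of a compact oriented $1$-manifold is zero, so the coefficient of each monomial $e^A\, b_1\cdots b_m$ vanishes. The pairing of terms is between the two ends of each interval component of the moduli space, not between two readings of one end. This does not affect the validity of the argument, since the correct mechanism is present earlier in your write-up.
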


$(\mathcal{A}_*(\Lambda), \partial_{\mathfrak{s}})$ is called the \textit{Chekanov-Eliashberg DGA} of $\Lambda$.
To construct this DGA, we have chosen auxiliary data: an almost complex structure $J$ on $P$, a spin structure $\mathfrak{s}$ on $\Lambda$ and for every $a\in \mathcal{R}(\Lambda)$, a capping path $\gamma_a$ and an orientation of the determinant line of the capping operator.
We consider the stable tame isomorphism class of this DGA. This notion is defined by an equivalence relation generated by stabilizations and tame automorphisms over $\Z[H_1(\Lambda)]$. See, for instance, \cite[Subsection 2.6]{EES-non}.
\begin{thm}[{\cite[Theorem 4.12]{EES-ori}}]
The stable tame isomorphism class of $(\mathcal{A}_*(\Lambda),\partial_{\mathfrak{s}})$ as a DGA over $\Z[H_1(\Lambda)]$ is independent of the choice of auxiliary data except the spin structures, and invariant by Legendrian isotopies of $\Lambda$ preserving the spin structures.
\end{thm}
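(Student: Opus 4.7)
The plan is to separate the statement according to the three groups of auxiliary data. First, the choice of capping paths $\gamma_a$ and of orientations on the capping operators at Reeb chords is purely algebraic: if $\gamma'_a$ is another path from $a(0)$ to $a(T)$ in $\Lambda$, then $\gamma_a-\gamma'_a$ represents a class $A_a\in H_1(\Lambda)$, and rereading the definition of $\partial_\Lambda$ shows that the new differential is obtained from the old one by the tame conjugation $a\mapsto e^{-A_a} a$. Reversing the capping orientation at a single $a\in\mathcal{R}(\Lambda)$ flips the sign of every rigid holomorphic disk having $a$ as a puncture, which is realized by the tame automorphism $a\mapsto -a$. Both modifications are tame automorphisms over $\Z[H_1(\Lambda)]$, hence give the same stable tame isomorphism class a priori.

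For the independence of $J$, I would mimic the SFT strategy already used in this paper. Given two regular choices $J_0$ and $J_1$, pick a generic homotopy $\tilde J_s$ on $\R\times(P\times\R)$ that is cylindrical outside a compact set and agrees with the cylindrical lift of $J_0$ for $a\gg 0$ and of $J_1$ for $a\ll 0$. The trivial cylinder $\R\times\Lambda$ is then an exact Lagrangian cobordism from $\Lambda$ to itself, so the cobordism DGA map constructed in Subsection \ref{subsec-app} produces a chain map
\[\Phi_{01}\colon (\mathcal{A}_*(\Lambda),\partial_{\Lambda,J_0})\longrightarrow (\mathcal{A}_*(\Lambda),\partial_{\Lambda,J_1}).\]
Running the homotopy backward gives $\Phi_{10}$, and a standard two-parameter family argument (generic homotopy of homotopies, followed by gluing of broken curves with signs controlled as in Section \ref{sec-proof}) yields DGA homotopies $\Phi_{10}\circ\Phi_{01}\simeq\id$ and $\Phi_{01}\circ\Phi_{10}\simeq\id$. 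A classical algebraic result on semifree DGAs then upgrades such a homotopy equivalence to a stable tame isomorphism.

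Invariance under a Legendrian isotopy $(\Lambda_t)_{t\in[0,1]}$ with fixed spin structure reduces to the previous case once the isotopy is realized as the trace of an ambient contact isotopy $\varphi_t$ of $P\times\R$; after rounding the corners one obtains an exact Lagrangian cobordism $L_{\{\Lambda_t\}}\subset \R\times(P\times\R)$ from $\Lambda_0$ to $\Lambda_1$, with spin structures propagated in the obvious way. The reverse isotopy supplies a cobordism in the opposite direction, whose concatenation with $L_{\{\Lambda_t\}}$ is Hamiltonian isotopic to the cylinder $\R\times \Lambda_0$; thus the two cobordism maps are mutually inverse up to DGA homotopy, and the same algebraic upgrade as above yields a stable tame isomorphism.

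I expect the main obstacle to be the careful verification of signs in one- and two-parameter families of moduli spaces: one must show that the cobordism counts assemble into a genuine DGA chain map and that the candidate chain homotopies satisfy the required relations in $\Z$-coefficients, not merely modulo $2$. This is essentially the orientation analysis that the paper postpones to Section \ref{sec-proof}; once it is in place, everything above reduces to formal bookkeeping on semifree DGAs over $\Z[H_1(\Lambda)]$.
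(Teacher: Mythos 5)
This theorem is imported verbatim from \cite[Proposition 2.6]{EES} and is not re-proved in this paper, so your sketch should be compared against the proof given in \cite{EES} rather than against anything here. That proof proceeds by a Chekanov-style bifurcation analysis of a generic one-parameter family of Legendrian submanifolds (and of the auxiliary data): births and deaths of Reeb chords produce stabilizations, handle slides in one-parametric moduli spaces produce tame automorphisms, and concatenating finitely many such elementary steps \emph{directly} yields a stable tame isomorphism. Your cobordism-theoretic route, in the spirit of \cite{EHK} and \cite{K}, is a genuinely different strategy.

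The critical gap is your final sentence about the ``algebraic upgrade.'' After producing a DGA map $\Phi_{01}$ with a homotopy inverse $\Phi_{10}$, you invoke a ``classical algebraic result on semifree DGAs'' to promote the resulting homotopy equivalence to a stable tame isomorphism. No such general result is available, and it is particularly delicate over the group ring $\Z[H_1(\Lambda)]$, which is not a field. Stable tame isomorphism is strictly stronger data than DGA homotopy equivalence: each stabilization and each tame automorphism is a homotopy equivalence, so one implication is easy, but there is no converse. This is precisely why the literature (Chekanov, \cite{EES}) establishes Legendrian-isotopy invariance by bifurcation analysis rather than by cobordism maps: the cobordism method, as written, would only give invariance up to DGA homotopy — hence of the homology, of augmentation sets, etc. — which is weaker than the statement you are trying to prove. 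Any attempt to supply the missing algebraic upgrade by appealing to the invariance theorem for Legendrian contact DGAs would be circular.

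A secondary issue: the DGA cobordism map $\Phi_L$ of Theorem \ref{thm-DGA-map} is defined only for the translation-invariant $\tilde{J}$ lifted from a single $J$ on $P$. To compare $J_0$ and $J_1$ you correctly note one needs a non-cylindrical interpolating family $\tilde J_s$ on the symplectization, but then the present framework does not ``produce'' the chain map for you — the transversality, Gromov compactness, and orientation analyses of Subsection \ref{subsec-DGA-map} and Section \ref{sec-proof} would all have to be redone for this broader class of almost complex structures. By contrast, your treatment of the capping paths and capping-operator orientations as tame automorphisms is correct and agrees with the standard argument.
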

%(See also \cite[Proposition 2.6]{EES}.)

It remains to discuss the change of spin structures. Let $\Spin (\Lambda)$ denote the set of spin structures on $\Lambda$ up to isomorphisms.
We fix $\mathfrak{s}\in \Spin (\Lambda)$ to define the DGA $(\mathcal{A}_*(\Lambda), \partial_{\mathfrak{s}})$.
It is well-known that there is a bijection
\[ \mathrm{Spin}(\Lambda) \to H^1(\Lambda;\Z/2)\colon \mathfrak{s}' \mapsto d(\mathfrak{s}, \mathfrak{s}').\]
See \cite[Subsection 4.4.1]{EES-ori}. We define a ring isomorphism
\begin{align}\label{spin-change}
\psi (\mathfrak{s},\mathfrak{s}') \colon \Z[H_1(\Lambda)] \to \Z[ H_1(\Lambda)] \colon e^A\mapsto (-1)^{\la d(\mathfrak{s},\mathfrak{s}'),A \ra} e^A.
\end{align}
\begin{prop}[{\cite[Theorem 4.29]{EES-ori}}]\label{prop-change-spin}
Let $\partial_{\mathfrak{s}'}$ be the differential on $\mathcal{A}_*(\Lambda)$ defined by choosing $\mathfrak{s}'\in \mathrm{Spin}(\Lambda)$. If we define an isomorphism of graded rings
\[\Psi \colon \mathcal{A}_*(\Lambda) \to \mathcal{A}_*(\Lambda) \colon \begin{cases} x \mapsto \psi(\mathfrak{s},\mathfrak{s}') (x) & \text{ if } x\in \Z[H_1(\Lambda)], \\
a \mapsto a & \text{ if }a\in \mathcal{R}(\Lambda), \end{cases}\]
then $\partial_{\mathfrak{s}'} \circ \Psi = \Psi \circ \partial_{\mathfrak{s}}$ holds.
\end{prop}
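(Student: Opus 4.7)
The plan is to reduce the commutation $\partial'_\Lambda\circ\Psi=\Psi\circ\partial_\Lambda$ to a disk-by-disk comparison of orientation signs, and then to show that the sign change on each rigid disk is governed exactly by the evaluation of $d(\mathfrak{s},\mathfrak{s}')$ on the homology class $A\in H_1(\Lambda)$ that the disk records.

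First I would observe that $\Psi$ is a graded ring automorphism and that both $\partial_\Lambda$ and $\partial'_\Lambda$ are $\Z[H_1(\Lambda)]$-linear graded derivations, so $\partial'_\Lambda\circ\Psi$ and $\Psi\circ\partial_\Lambda$ are both $\Psi$-twisted derivations agreeing on the coefficient ring (both annihilate $\Z[H_1(\Lambda)]$). Hence it suffices to check $\partial'_\Lambda(a)=\Psi(\partial_\Lambda(a))$ for each Reeb chord generator $a\in\mathcal{R}(\Lambda)$. Since the two differentials count the same rigid $J$-holomorphic disks $u\in\bar{\mathcal{M}}_{\Lambda,A}(a;b_1,\dots,b_m)$ and share the identical overall prefactor $(-1)^{(n-1)(|a|+1)}$, matters reduce to showing that for every such $u$ representing the class $A$, the local orientation signs $\epsilon_{\mathfrak{s}}(u),\epsilon_{\mathfrak{s}'}(u)\in\{\pm1\}$ induced by the two spin structures satisfy
\[ \epsilon_{\mathfrak{s}'}(u) = (-1)^{\la d(\mathfrak{s},\mathfrak{s}'),A\ra}\,\epsilon_{\mathfrak{s}}(u). \]

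The second and most substantive step is to establish this disk-wise sign identity. Following the conventions of \cite{EES-ori} (to be reviewed in Subsection \ref{subsec-compare}), the sign $\epsilon_{\mathfrak{s}}(u)$ arises from an orientation of the determinant line of the linearized $\bar{\partial}$-operator at $u$, built by gluing that operator with the preassigned orientations of the capping operators at each Reeb-chord puncture; the spin structure $\mathfrak{s}$ enters through a stable trivialization of $u^*T\Lambda$ along $\partial D_{m+1}$. Replacing $\mathfrak{s}$ by $\mathfrak{s}'$ alters this trivialization by a loop of orthogonal frames whose homotopy class on any loop $\gamma\subset\Lambda$ is detected by $\la d(\mathfrak{s},\mathfrak{s}'),[\gamma]\ra\in\Z/2$. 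The relevant loop here is obtained by closing the boundary of $u$ with the fixed capping paths at the punctures, namely $\partial u+\gamma_a-\gamma_{b_1}-\dots-\gamma_{b_m}$, which represents exactly $A\in H_1(\Lambda)$. Standard gluing arguments for the determinant line then show that the orientation flips by $(-1)^{\la d(\mathfrak{s},\mathfrak{s}'),A\ra}$, giving the displayed identity.

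Combining these steps, the coefficient of $e^A\cdot b_1\cdots b_m$ in $\partial'_\Lambda(a)$ differs from the corresponding coefficient in $\partial_\Lambda(a)$ by the multiplicative factor $(-1)^{\la d(\mathfrak{s},\mathfrak{s}'),A\ra}$, which is precisely how $\psi(\mathfrak{s},\mathfrak{s}')$ acts on $e^A$; this proves $\partial'_\Lambda(a)=\Psi(\partial_\Lambda(a))$. The main obstacle will be the third step: verifying rigorously that the sign change is exhausted by the evaluation of $d(\mathfrak{s},\mathfrak{s}')$ on $A$, independently of the specific disk $u$, of the capping paths, and of the chosen orientations of the capping operators. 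This is the content of \cite[Theorem 4.9]{EES-ori}, and in practice it reduces to tracking carefully how the trivializations near the punctures piece together with the boundary trivialization on $\partial D_{m+1}$ under a global change of spin structure.
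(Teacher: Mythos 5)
The paper does not prove this proposition: it is stated as a direct citation of \cite[Theorem~4.9]{EES-ori}, and no argument is given in the text. Your sketch correctly reproduces the standard strategy behind that cited result --- reduce to generators by the Leibniz rule, compare the two orientation signs on each rigid disk, and observe that the spin structure enters the orientation scheme only through the stable trivialization of $u^*T\Lambda$ along the closed-up boundary loop, whose homotopy class in $\pi_1$ of the frame bundle is detected by pairing $d(\mathfrak{s},\mathfrak{s}')$ with $A\in H_1(\Lambda)$. You also honestly flag that the substantive verification (independence of the disk, of the capping paths, and of the capping-operator orientations) is precisely the content of the cited theorem. Since the paper itself delegates the proof to \cite{EES-ori}, your attempt is consistent with the paper's treatment; it is an outline of the cited theorem's proof rather than a new argument, which is an entirely appropriate stance for a statement the paper records as a quoted result.
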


%\begin{rem}\label{rem-spin-change}
%When discussing a spin structure on $\Lambda$, there is an underlying orientation. Let $-\Lambda$ denote the manifold with the opposite orientation.
%There exists a bijection $\Spin (\Lambda) \to \Spin(-\Lambda)\colon \mathfrak{s} \mapsto \bar{\mathfrak{s}}$ described as follows: Let us fix a triangulation of $\Lambda$. We denote its $k$-skeleton by $\Lambda^{(k)}$. As in \cite[Subsection 3.4.2]{EES-ori}, to a spin structure on $\Lambda$, we associate a homotopy class of orientation-preserving trivializations of $\rest{(T \Lambda \oplus \R^2)}{\Lambda^{(1)}}$ which extends to $\Lambda^{(2)}$. Here, this trivialization is induced by taking a section on $\Lambda^{(1)}$ of a principal $\Spin (n+2)$-bundle for a spin structure of $T\Lambda\oplus \R^2$ induced by $\mathfrak{s}$ on $T\Lambda$. By arguments using obstruction theory, this correspondence is a bijection.
%If $\mathfrak{s}\in \Spin (\Lambda)$ corresponds to an orientation-preserving trivialization $\varphi \colon \rest{(T \Lambda \oplus \R^2)}{\Lambda^{(1)}} \to \R^{n+2}$, we define $\bar{\mathfrak{s}}\in \Spin (-\Lambda)$ which corresponds to an orientation-preserving trivialization $g\circ \varphi \colon \rest{(T (-\Lambda) \oplus \R^2)}{\Lambda^{(1)}} \to \R^{n+2}$ for some $g \in \mathrm{GL}_{n+2}(\R)$ with $\det g<0$, where $T(-\Lambda)$ has the orientation opposite to $T\Lambda$. Then, the orientations of moduli spaces for $\mathfrak{s}$ are the same as the orientation for $\bar{\mathfrak{s}}$. See Remark \ref{rem-opposite-ori}.
%\end{rem}

\subsection{DGA map for Lagrangian cobordism}\label{subsec-DGA-map}

We return to a general exact Lagrangian cobordism $L$ from $\Lambda_-$ to $\Lambda_+$.
Hereafter, we assume that $L$ is oriented and admits a spin structure and that its Maslov class vanishes. As in the former section, both $\Lambda_+$ and $\Lambda_-$ are assumed to be connected. Then, the third condition of the Definition \ref{def-cobordism} is satisfied automatically.

We fix a spin structure $\mathfrak{s}_0$ on $L$. Then, $\Lambda_+$ and $\Lambda_-$ are equipped with spin structures $\mathfrak{s}_{+}$ and $\mathfrak{s}_-$ respectively as the boundary of $\bar{L}$. See Remark \ref{rem-spin-boundary} for a precise description.
By fixing the capping path and the orientation of the determinant line of the capping operator for every Reeb chord, we specify the orientations of moduli spaces and define the Chekanov-Eliashberg DGAs $(\mathcal{A}_*(\Lambda_+),\partial_{\mathfrak{s}_{+}})$ and $(\mathcal{A}_*(\Lambda_-),\partial_{\mathfrak{s}_{-}})$.
\begin{rem}\label{rem-spin-boundary}
From the orientation on $L$,
$\Lambda_{\pm}$ is oriented so that $ T_{(a_{\pm},x)}L = T_{a_{\pm}}\R \oplus T_x\Lambda_{\pm} $ as oriented vector spaces. Here, a positive vector in $T_{a_{\pm}}\R$ is $\partial_a$.
Moreover, $\mathfrak{s}_0$ defines a spin structure on  $\rest{T L}{\{a_{\pm}\}\times \Lambda_{\pm}} \cong \underline{ \R} \oplus T\Lambda_{\pm}$ as a double cover of the oriented orthonormal frame bundle $\SO(\underline{ \R} \oplus T\Lambda_{\pm})$ of $\underline{ \R} \oplus T\Lambda_{\pm}$. Here, we fix some metric on $T\Lambda_{\pm}$.
The pullback of this covering space on the oriented orthonormal frame bundle $\SO(T\Lambda_{\pm})$ of $T\Lambda_{\pm}$, which can be seen as a subbundle of $\SO(\underline{ \R} \oplus T\Lambda_{\pm})$, 
defines a spin structure $\mathfrak{s}_{\pm}$ on $T\Lambda_{\pm}$. 
\end{rem}

Let us consider a base change of $\mathcal{A}_*(\Lambda_-)$ by the ring map $(i_-)_*\colon \Z[H_1(\Lambda_-)]\to \Z[H_1(L)]$. Then, we obtain a DGA over $\Z[H_1(L)]$
\begin{align}\label{base-change}
(\mathcal{A}_*(\Lambda_-)\otimes_{\Z[H_1(\Lambda_-)]} \Z[H_1(L)], \partial_{\mathfrak{s}_-}\otimes \id_{ \Z[H_1(L)]} ).
\end{align}
Furthermore, by the ring map $(i_+)_*\colon \Z[H_1(\Lambda_+)]\to \Z[H_1(L)]$, we can think of it as a DGA over $\Z[H_1(\Lambda_+)]$.

%Suppose that the moduli space (\ref{moduli-cobordism})has a fixed orientation for every $B\in H_1(L)$, $a \in \mathcal{R}(\Lambda_+)$ and $b_1,\dots ,b_m\in \mathcal{R}(\Lambda_-)$.
%

We consider the moduli space (\ref{moduli-cobordism}).
By \cite[Lemma 3.8]{EHK}, this moduli space 
%and the moduli spaces (\ref{moduli-Legendrian}) for $\Lambda=\Lambda_+$ and $\Lambda_-$ 
is transversely cut out for generic $J$, and its dimension as a smooth manifold is 
\[|a|- (|b_1|+ \dots + |b_m|).\]
By the Gromov compactness, the union  $\coprod_{B\in H_1(L)} \mathcal{M}_{L, B}(a;b_1,\dots ,b_m)$ is a compact $0$-dimensional manifold if $|a|=|b_1|+\dots +|b_m|$.

The orientation of this moduli space is fixed as in Appendix \ref{subsec-ori-Lag}. 
Then, the following result holds. 
\begin{thm}\label{thm-DGA-map}
We define a homomorphism of graded $\Z[H_1(\Lambda_+)]$-algebras
\[\Phi_L \colon \mathcal{A}_*(\Lambda_+)  \to \mathcal{A}_*(\Lambda_-) \otimes_{\Z[H_1(\Lambda_-)]} \Z[H_1(L)]\]
so that for every $a\in \mathcal{R}(\Lambda_+)$,
\[\Phi_L (a) \coloneqq \sum_{|a|=|b_1|+\dots +|b_m|} \sum_{B\in H_1(L)} (-1)^{(n-1)(|a|+1) +m} \left( \#_{\sign} \mathcal{M}_{L,B}(a; b_1,\dots ,b_m) \right) \cdot (b_1\cdots b_m) \otimes e^B. \]
Then, $\Phi_L$ is a DGA map over $\Z[H_1(\Lambda_+)] $, that is, $ \Phi_L\circ \partial_{\mathfrak{s}_+} = (\partial_{\mathfrak{s}_-} \otimes \id_{\Z[H_1(L)]} )\circ \Phi_L $.
\end{thm}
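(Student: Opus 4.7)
The plan is to verify the chain map identity $\Phi_L\circ \partial_{\Lambda_+} = (\partial_{\Lambda_-}\otimes \id)\circ \Phi_L$ only on each generator $a\in \mathcal{R}(\Lambda_+)$; this suffices because both sides are $\Z[H_1(\Lambda_+)]$-algebra maps combined with the graded Leibniz rule for the derivations. The identity will follow from the standard SFT boundary argument: for every $m\geq 0$, $b_1,\ldots,b_m\in \mathcal{R}(\Lambda_-)$ with $|a|=|b_1|+\cdots+|b_m|+1$ and every $B\in H_1(L)$, the compactified moduli space $\bar{\mathcal{M}}_{L,B}(a;b_1,\ldots,b_m)$ is a compact oriented $1$-manifold whose boundary consists exactly of the strata (\ref{boundary-1}) and (\ref{boundary-2}). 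Since the signed count of the boundary of such a manifold vanishes, summing over $(b_1,\ldots,b_m)$ and $B$ will give the desired coefficient-wise equality.

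First I would match the Type 1 strata (\ref{boundary-1}) with the coefficient of $b_1\cdots b_m\otimes e^B$ in $\Phi_L(\partial_{\Lambda_+}a)$: expanding $\partial_{\Lambda_+}(a)$ as a signed sum of monomials $e^{A_+}c_1\cdots c_l$ and then using multiplicativity of $\Phi_L$ together with the formula for $\Phi_L(c_i)$, the combinatorial data reproduces exactly the indexing of (\ref{boundary-1}), and, using $\sum_i m_i=m$, the total sign simplifies to $(-1)^{(n-1)(|a|+1)}\cdot(-1)^{\tau_1}$. Next I would match the Type 2 strata (\ref{boundary-2}) with the coefficient in $(\partial_{\Lambda_-}\otimes\id)\Phi_L(a)$: the Leibniz rule applied to the monomial $b'_1\cdots b'_{m'}$ in $\Phi_L(a)$ picks one letter $b'_j=c$ to differentiate, producing a Koszul sign $(-1)^{|b_1|+\cdots+|b_{j-1}|}$, and $\partial_{\Lambda_-}(c)$ contributes the factor $(-1)^{(n-1)(|c|+1)}$; combined with the prefactor $(-1)^{(n-1)(|a|+1)+(m-k+j)}$ of $\Phi_L(a)$, the total sign becomes $(-1)^{(n-1)(|a|+1)}\cdot(-1)^{\tau_2}\cdot(-1)$. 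The extra factor of $-1$ is precisely what is needed so that the vanishing identity
\[
\sum(-1)^{\tau_1}\,\#\bar{\mathcal{M}}_{\Lambda_+}\!\cdot\!\prod\#\mathcal{M}_L \;+\; \sum(-1)^{\tau_2}\,\#\mathcal{M}_L\cdot\#\bar{\mathcal{M}}_{\Lambda_-}=0
\]
translates into the desired coefficient-wise equality between the two sides of the theorem.

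The main obstacle is not geometric: compactness of the relevant $1$-dimensional moduli spaces, transversality for generic $J$, and the description of the two kinds of boundary strata are all quoted from \cite{E, EHK}, as is the dimension formula $|a|-\sum|b_i|$. The only nontrivial ingredient is the coherence of orientations, packaged as Proposition \ref{prop-DGA-ori} and deferred to Section \ref{sec-proof}. Once that is granted, what remains is the sign bookkeeping sketched above, and the whole point of the prefactors $(-1)^{(n-1)(|a|+1)}$ inserted in the definitions of $\partial_{\Lambda_\pm}$, and of the extra $(-1)^m$ inserted in $\Phi_L$, is to make $\tau_1$ and $\tau_2$ match the Leibniz and Koszul signs arising on the algebraic side. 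The only real risk therefore lies in executing this parity verification correctly; it is routine but mechanical, and is the step on which I would focus attention.
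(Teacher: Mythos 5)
Your proposal is correct and follows essentially the same route as the paper: verify the chain map identity on generators, expand both sides combinatorially (matching Type 1 strata to $\Phi_L\circ\partial_{\Lambda_+}$ and Type 2 strata to $(\partial_{\Lambda_-}\otimes\id)\circ\Phi_L$), then use Proposition \ref{prop-DGA-ori} to convert product-orientation counts to boundary-orientation counts and invoke the vanishing of the signed boundary of a compact oriented $1$-manifold. The signs you claim for the two types, $(-1)^{(n-1)(|a|+1)}(-1)^{\tau_1}$ and $(-1)^{(n-1)(|a|+1)}(-1)^{\tau_2}(-1)$, agree with the paper's computation of $\mu_1,\mu_2$ and the identity $\nu_1=\nu_2=(n-1)(|a|+1)$.
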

An analogous result in $\Z$-coefficients is proved in \cite[Theorem 2.5]{K}.
We give a proof Theorem \ref{thm-DGA-map} in Appendix \ref{subsec-proof-DGA} by using the conventions of signs in \cite{EES-ori}, which we summarize in Appendix \ref{subsec-convention}. Note that these conventions are different from those of \cite{K}. We use the conventions of \cite{EES-ori} because we will later apply results about the DGAs in \cite{EENS}, which refers to \cite{EES-ori} to fix orientations of moduli spaces in \cite[Section 6]{EENS}.

%\begin{rem} In fact, the following result about compositions holds by \cite[Theorem 2.6]{K}: Suppose that $L_1$ (resp. $L_2$) is an exact Lagrangian cobordism from $\Lambda_1$ to $\Lambda_2$ (resp. from $\Lambda_2$ to $\Lambda_3$). Let $L_1\#_{\rho} L_2$ be an exact Lagrangian cobordism from $\Lambda_1$ to $\Lambda_3$ obtained by gluing the positive end of $L_1$ and the negative end of $L_2$. Here $\rho \gg 0$ is a gluing parameter.
%In this situation, for sufficiently large $\rho$, $\Phi_{L_1}\circ \Phi_{L_2} = \Phi_{L_1\#_{\rho}L_2}$ holds. This result implies that $\Phi_{\R\times \Lambda}=\id_{\mathcal{A}_*(\Lambda)}$, because $\Phi_{\R\times \Lambda}(a)= (-1)^{\tau(a)}a$ for some $\tau(a)\in \Z/2$, and \[ a= (-1)^{2\tau(a)}a= \Phi_{\R\times \Lambda}\circ \Phi_{\R\times \Lambda}(a)=\Phi_{\R\times \Lambda}(a) = (-1)^{\tau(a)}a .\] \end{rem}

\subsection{Application to clean Lagrangian intersection in $T^*\R^{n}$}\label{subsec-app}

We equip $\R^{n}$ with the standard metric $\la\cdot, \cdot \ra \coloneqq \sum_{i=1}^n dx_i\otimes dx_i$ and consider the contact manifold with a contact form $(U^*\R^{n},  \alpha_{\R^{n}})$.
Note that for the unit sphere $S^{n-1}$ in $\R^{n}$, there exists a diffeomorphism
\begin{align}\label{diffeo-identify}
 U^*\R^{n} \to T^*S^{n-1}\times \R \colon (q,p) \mapsto ( (p, q-\la q,p \ra p), \la q,p \ra)
\end{align}
such that the pullback of  $dz-\lambda_{S^{n-1}}$ coincides with $\alpha_{\R^{n}}$.
For $n\geq 3$, the Liouville manifold $(P,\lambda) = (T^*S^{n-1}, \lambda_{S^{n-1}})$ satisfies the condition that $2c_1(TP)=0$ and $H_1(P)=0$, which we have assumed in Subsection \ref{subsec-setup}. We define the Chekanov-Eliashberg DGAs of Legendrian submanifolds of $U^*\R^n$ by using the identification (\ref{diffeo-identify}).

We return to the situation of Subsection \ref{subsec-clean} for $Q=\R^{n}$.
Let $L_1$ be an exact Lagrangian submanifold of $T^*\R^{n}$ which bounds a compact connected Legendrian submanifold $\Lambda$ of $U^*\R^{n}$.
We assume that $L_1$ is spin and its Maslov class vanishes.
If the zero section $\R^n$ and $L_1$ have a clean intersection $K=\R^n \cap L_1$,
we define an exact Lagrangian submanifold $L$ in $(\R\times U^*\R^{n}, e^a\alpha_{\R^n})$ as (\ref{induced-cobordism}) which is diffeomorphic to $L_1\setminus K$ by $\psi^{-1} \circ F$. Here, $\Lambda$ and $\Lambda_K$ are endowed with spin structures as the boundaries of $L\cap ([a_-,a_+]\times U^*\R^n)$. We also define
\begin{align}\label{boundary-embedding}
\begin{split}
& i'_+\colon \Lambda \to L_1\setminus K\colon x\mapsto \psi^{-1}\circ F(a_+,x), \\
& i'_-\colon \Lambda_K \to L_1\setminus K \colon x \mapsto  \psi^{-1}\circ F(a_-,x).
\end{split}
\end{align}

If $K$ is connected, then $L$ satisfies the conditions in Definition \ref{def-cobordism}, especially the third one, to be an exact Lagrangian cobordism form $\Lambda_K$ to $\Lambda$.
Therefore, Theorem \ref{thm-DGA-map} is applied to show that a DGA map is induced by $L$.

\begin{thm}\label{thm-clean-intersection}
%Let $L_0$ be an exact Lagrangian submanifold of $T^*\R^{n+1}$ satisfying the above condition.
Suppose that $L_1$ satisfies the above conditions. If the clean intersection $K=\R^n \cap L_1$ is connected, then there exists a DGA map over $\Z[H_1(\Lambda)]$
\[ \Phi_{L_1} \colon \mathcal{A}_*(\Lambda) \to \mathcal{A}_*(\Lambda_{K}) \otimes_{\Z[H_1(\Lambda_K)]} \Z[H_1(L_1\setminus K)] .\]
Here, the target object is considered as a DGA over $\Z [H_1(\Lambda)]$ in the same way as (\ref{base-change}) by
\[ (i'_+)_*\colon \Z[H_1(\Lambda)] \to \Z[H_1(L_1\setminus K)],\ (i'_-)_*\colon \Z[H_1(\Lambda_K)] \to \Z[H_1(L_1\setminus K)]. \]
\end{thm}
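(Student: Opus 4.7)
The plan is to reduce the statement to a direct application of Theorem \ref{thm-DGA-map}. The geometric input is already in place: by Lemma \ref{lem-neighborhood} and the construction leading to (\ref{induced-cobordism}), one obtains a compactly supported Hamiltonian diffeomorphism $\psi$ on $D^*_r\R^n$ and an exact Lagrangian submanifold $L\coloneqq F^{-1}(\psi(L_1)\setminus K)$ of the symplectization $\R\times U^*\R^n$ which is an exact Lagrangian cobordism from $\Lambda_K$ to $\Lambda$. Since $K$ is connected, $\Lambda_K$ is connected, and therefore the locally constant primitive $f\colon L\to \R$ appearing in the construction is genuinely constant on the negative end, so Definition \ref{def-cobordism} is satisfied. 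The diffeomorphism $\psi^{-1}\circ F\colon L\to L_1\setminus K$ sends the boundary inclusions $i_\pm$ of Subsection \ref{subsec-DGA-map} to the inclusions $i'_\pm$ defined in (\ref{boundary-embedding}).

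Next I would check the remaining hypotheses of Theorem \ref{thm-DGA-map}. The Legendrian ends $\Lambda$ and $\Lambda_K$ are connected by assumption. Because $L_1$ is spin, the open submanifold $L_1\setminus K\cong L$ inherits a spin structure by restriction. The Maslov class of $L$ as a Lagrangian in $\R\times U^*\R^n$ vanishes: the symplectomorphism $F$ intertwines the Lagrangian Grassmannian bundle of $T(T^*\R^n\setminus \R^n)$ with that of $T(\R\times U^*\R^n)$, so the Maslov index of any loop in $L$ computed in the symplectization equals the Maslov index of its image under $\psi^{-1}\circ F$ computed in $T^*\R^n$, which is zero by hypothesis on $L_1$.

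Given these verifications, Theorem \ref{thm-DGA-map} applied to $L$ yields a DGA map over $\Z[H_1(\Lambda)]$
\[ \Phi_L\colon \mathcal{A}_*(\Lambda)\to \mathcal{A}_*(\Lambda_K)\otimes_{\Z[H_1(\Lambda_K)]}\Z[H_1(L)]. \]
Using the isomorphism $H_1(L)\cong H_1(L_1\setminus K)$ induced by $\psi^{-1}\circ F$, together with the identification $i_\pm\leftrightarrow i'_\pm$ on boundaries, this translates into the desired map $\Phi_{L_1}$, and the two $\Z[H_1(\Lambda)]$-algebra structures on the target described in the statement match the ones coming from the cobordism setup.

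The main obstacle is not analytical but organizational: all the hard content lives in Theorem \ref{thm-DGA-map} and the orientation result Proposition \ref{prop-DGA-ori}. The delicate point is to arrange that the auxiliary data on $\Lambda$ and $\Lambda_K$ used to define the two ambient DGAs (spin structures, capping paths $\gamma_a$, and orientations of the determinant lines of the capping operators) can be taken coherently with the spin structure and orientation data on $L$ used to define $\Phi_L$, so that the counting formula of Theorem \ref{thm-DGA-map} really produces a chain map between the invariants constructed in Subsection \ref{subsec-CE-DGA}.
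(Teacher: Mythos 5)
Your proposal matches the paper's approach exactly: the paper gives no separate proof of Theorem \ref{thm-clean-intersection} but simply sets up the Lagrangian cobordism $L$ via (\ref{induced-cobordism}) and Lemma \ref{lem-neighborhood}, transfers the spin structure and verifies vanishing of the Maslov class via the exact symplectomorphism $\psi^{-1}\circ F$, and invokes Theorem \ref{thm-DGA-map}, translating $H_1(L)$ back to $H_1(L_1\setminus K)$. Your version spells out the bookkeeping (connectedness of $\Lambda_K$, the primitive $f$ being constant on the negative end, identification of $i_\pm$ with $i'_\pm$) that the paper leaves implicit, but the logic is the same.
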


\section{$1$-dimensional clean intersection in $T^*\R^3$}\label{sec-knot}

In this section, we deduce some topological constraints on the clean Lagrangian  intersections from Theorem \ref{thm-clean-intersection} when $n=3$ and $L_1=\varphi(L_{K_0})$ for a knot $K_0$ in $\R^3$ and $\varphi\in \Ham_c(T^*\R^3)$.
We focus on the knot invariants from \cite{N} and their properties.

\subsection{Smooth isotopy of the conormal bundle of a knot}\label{subsec-smooth}
%\subsection{Restrictions on the knot type of clean intersections}
To emphasize on the difference between Hamiltonian isotopies and $C^{\infty}$ isotopies, we first discuss $C^{\infty}$ isotopies of $L_{K_0}$.

\begin{prop}\label{prop-smooth-isotopy}
For an arbitrary pair of knots $(K_0, K)$ in $\R^3$,
there exists a compactly supported $C^{\infty}$ isotopy $(\varphi^t)_{t\in [0,1]}$ of $T^*\R^3$ such that $\R^3$ and $\varphi^1(L_{K_0})$ have a clean intersection along $K$.
\end{prop}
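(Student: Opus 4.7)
The key point is that smooth ambient isotopies of $T^*\R^3 \cong \R^6$ are subject only to topological constraints, and any two embedded circles in $\R^n$ for $n \ge 4$ are smoothly ambient isotopic (codimension $\ge 3$ removes transverse crossings in a generic $1$-parameter family). My plan is to lift a smooth knot isotopy $K_0 \rightsquigarrow K$ inside $T^*\R^3$ to an ambient isotopy that carries the entire local picture of $(T^*\R^3, \R^3, L_{K_0})$ near $K_0$ to the corresponding picture near $K$.

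First, I would choose a smooth isotopy of embeddings $(\gamma^t \colon S^1 \hookrightarrow T^*\R^3)_{t \in [0,1]}$ with $\gamma^0$ parametrizing $K_0$, $\gamma^1$ parametrizing $K$, and $\gamma^t(S^1) \cap \R^3 = \emptyset$ for $t \in (0,1)$; one achieves this by pushing $K_0$ slightly off the zero section in a cotangent direction, performing the isotopy inside $T^*\R^3 \setminus \R^3$, and then descending to $K$. Next, at every point $q \in K' \in \{K_0, K\}$ the tangent space $T_q T^*\R^3$ decomposes canonically as $T_q K' \oplus N_q \oplus N_q^* \oplus (T_q K')^*$, where $N_q$ is the normal of $K'$ inside $\R^3$, so that $T_q \R^3 = T_q K' \oplus N_q$ and $T_q L_{K'} = T_q K' \oplus N_q^*$. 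Since the rank-$5$ normal bundle of $S^1$ in $\R^6$ is trivial and this $(2,2,1)$-flag structure is of the same type at both ends, the triples $(T^*\R^3, \R^3, L_{K_0})$ near $K_0$ and $(T^*\R^3, \R^3, L_K)$ near $K$ share the same local normal form.

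Applying a parametric form of the isotopy extension theorem that transports tubular-neighborhood data (including a smooth family of flags interpolating the two canonical ones) along $\gamma^t$, I obtain a compactly supported ambient isotopy $(\varphi^t)_{t \in [0,1]}$ of $T^*\R^3$ with $\varphi^0 = \mathrm{id}$ such that $\varphi^1$ identifies a small tubular neighborhood $U_0$ of $K_0$ with a small tubular neighborhood $U$ of $K$ and sends $(\R^3 \cap U_0, L_{K_0} \cap U_0, K_0)$ to $(\R^3 \cap U, L_K \cap U, K)$. I further arrange the support of $(\varphi^t)$ to be a thin tube around $\bigcup_t \gamma^t(S^1)$ whose intersection with $\R^3$ lies inside $U_0 \cup U$; this is possible because $\gamma^t(S^1)$ avoids $\R^3$ for $t \in (0,1)$.

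Inside $U$ one then has $\varphi^1(L_{K_0}) \cap U = L_K \cap U$, which meets $\R^3 \cap U$ cleanly along $K$. Outside the support of $(\varphi^t)$, $\varphi^1(L_{K_0}) = L_{K_0}$ is disjoint from $\R^3$ (its only intersection, $K_0$, is contained in the support), and in the support but outside $U_0 \cup U$ the thin tube condition keeps $\varphi^1(L_{K_0})$ away from $\R^3$; hence $\varphi^1(L_{K_0}) \cap \R^3 = K$ as a clean intersection. The delicate step I anticipate is the parametric isotopy extension transporting the flag structure coherently along $\gamma^t$, which reduces to checking that the bundle of such flags over $\bigcup_t \gamma^t(S^1)$ is connected; the remaining ingredients (the codimension $\ge 3$ isotopy and the thin-tube construction avoiding the zero section) are routine.
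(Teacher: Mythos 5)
Your approach is genuinely different from the paper's. The paper reduces to a single crossing change, uses an isotopy of \emph{immersions} $\iota_t\colon S^1\looparrowright\R^3$ (staying in the zero section throughout, passing through one double point at $t=1/2$), and then deforms the conormal-fiber family near the double point in the cotangent direction to make the images embedded; the ambient isotopy is built around this fibered picture. You instead lift $K_0$ entirely off the zero section, perform a global circle isotopy inside $T^*\R^3\setminus\R^3$ (where every knot is unknotted because the codimension is $\geq 3$), and drop onto $K$. Your route avoids the crossing-change induction but pays for it by needing to transport a tubular-neighborhood/flag structure along a sweep that leaves $\R^3$, and by needing tighter control of where $\varphi^1(L_{K_0})$ can re-enter the zero section.

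The real gap is in the last paragraph: you verify that $\varphi^1(L_{K_0})$ misses $\R^3$ outside the support $S$, and that it misses $\R^3$ on $S\setminus(U_0\cup U)$ by the thin-tube condition $S\cap\R^3\subset U_0\cup U$, and that inside $U$ it equals $L_K\cap U$; but you never address the region $\R^3\cap(U_0\setminus U)$. This region lies in $S$ and is therefore \emph{not} fixed by $\varphi^1$. Since $\varphi^1(U_0)=U$, the set $(\varphi^1)^{-1}(\R^3\cap U_0\setminus U)$ is some a priori uncontrolled subset of the tube, and if $L_{K_0}$ meets it, you get extra intersection points of $\varphi^1(L_{K_0})$ with $\R^3$. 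To close this you should additionally arrange $L_{K_0}\cap S\subset U_0$: then $\varphi^1(L_{K_0})\cap S=\varphi^1(L_{K_0}\cap S)\subset\varphi^1(U_0)=U$, so the set $U_0\setminus U$ contributes nothing. Arranging $L_{K_0}\cap S\subset U_0$ is plausible --- perturb $\gamma^t$ for $t\in(0,1)$ so that the swept surface $\Sigma=\bigcup_t\gamma^t(S^1)$ meets $L_{K_0}$ only along $K_0$ (generic since $\dim L_{K_0}+\dim\Sigma<6$, and the initial push-off direction $(T_qK_0)^*$ is transverse to $T_qL_{K_0}$), then shrink $S$ --- but as written this essential step is missing, and without it the conclusion $\varphi^1(L_{K_0})\cap\R^3=K$ does not follow.

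A secondary point: for the ``parametric isotopy extension'' you only need to interpolate the two flag \emph{sections} over $S^1$ (at $t=0$ and $t=1$) by a family of sections over $\Sigma\cong[0,1]\times S^1$; ``the bundle of flags is connected'' is not by itself sufficient, since the obstruction lives in $\pi_1$ of the flag fiber. It does vanish here (the flag at $q$ is determined by $T_qK'\in\mathbb{RP}^2$, and the Gauss loop $S^1\to\mathbb{RP}^2$ of any embedded circle in $\R^3$ lifts to $S^2$ and is therefore nullhomotopic), but you should say that rather than dismissing it. The paper's route never meets this issue because its isotopy of conormal bundles is built fiberwise over curves that remain in $\R^3$.
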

\begin{proof}
It suffices to show this claim for the following pair of knots $(K_0,K)$:
$K_0$ is an arbitrary knot given by a knot diagram. We choose one crossing in the diagram and change one over crossing to an under crossing or vice versa in the diagram of $K_0$. Let $K$ be the knot determined by the diagram after changing the crossing.

For such $K_0$ and $K$, we can find an isotopy of immersions $(\iota_t\colon S^1\looparrowright \R^3)_{t\in [0,1]}$
with $\iota_0(S^1)=K_0$ and $\iota_1(S^1)=K$ such that
$\iota_t$ is an embedding for every $t\neq 1/2$ and
$\iota_{1/2} \colon S^1\to \R^3$ is an immersion with a single double point.
Let $x\in \iota_{1/2}(S^1)$ be the double point with $\iota_{1/2}^{-1}(x)= \{\theta_1,\theta_2\}$.

We choose a smooth family of frames $(\{u_t,v_t\})_{t\in [0,1]}$ of the conormal bundle of $\iota_t$ and consider a map
\[ [0,1]\times ( S^1\times \R^2) \to T^*\R^3 \colon  (t,(s,(a,b)))\mapsto (\iota_t(s), au_t(s)+bv_t(s))\]
%Then $I_t$ is an embedding for $t\neq 1/2$. $\iota_{1/2}$ is an embedding on $(S^1\setminus \{\theta_1,\theta_2\})\times \R^2$ and $\iota_{1/2} (\theta_1\times \R^2)\cap \iota_{1/2}(\theta_2 \times \R^2)$
We smoothly deform this map near $\{t=1/2,\ s=\theta_1\}$, while not changing it near $\{t=0,1\} \cup \{s=\theta_2\}$, to a map
\[ [0,1]\times (S^1\times \R^2)\to  T^*\R^3 \colon (t, (s,w)) \mapsto I_t(s,w)\] 
so that for every $t\in [0,1]$ and $s\in S^1$, $I_t(\{s\}\times \R^2)$ is a hypersurface in $T^*_{\iota_t(s)}\R^3$
embedded as a closed subset, and $I_{1/2} (\{\theta_1\}\times \R^2) \subset T_{x}\R^3\setminus I_{1/2}(\{\theta_2\}\times \R^2)$.
Then, $(I_t)_{t\in [0,1]}$ is an isotopy of embeddings of $S^1\times \R^2$ into $T^*\R^3$ whose images are closed.
Finally, we choose an ambient isotopy $(\varphi^t)_{t\in [0,1]}$ of $T^*\R^3$ with a compact support which coincides with an extension of $(I_t)_{t\in [0,1]}$ in a neighborhood of the zero section, namely, $\varphi^t(L_{K_0})\cap D^*_r\R^3= I_t(L_{K_0})\cap D^*_r\R^3$ for some $r>0$ for every $t\in [0,1]$.
\end{proof}

\subsection{The Chekanov-Eliashberg DGA of the conormal torus of a knot and the framed knot DGA.}

In this subsection, any knot $K$ in $\R^3$ is equipped with an orientation. We often identify $L_K$ with a tubular neighborhood of $K$ in $\R^3$ in the same way as Example \ref{ex-gr} and $\Lambda_K$ is orientated as the boundary of $L_K \cap D^*_1\R^3$
(i.e. if $\nu$ is an outward vector at $q\in \Lambda_K$, $\R\nu \oplus T_q\Lambda_K \cong T_q L_K$ as oriented vector spaces, where $\R \nu$ has $\nu$ as a positive vector).

For an oriented knot $K$, we define its \textit{zero framing} by a section $l\colon K\to \Lambda_K$ such that the linking number of $K$ and $l(K)$ is $0\in \Z$.
The meridian of $K$ is represented by a fiber $(\Lambda_K)_q$ of any $q\in K$ and this circle is oriented so that $((dl)_q(v_1), v_2)$ is a positive basis of $T_{l(q)}\Lambda_K$
if we choose positive vectors $v_1$ in $T_qK$ and $v_2$ in $T_{l(q)}(\Lambda_K)_q$. Then, we fix an orientation of $S^1$ and take a diffeomorphism $m\colon S^1 \to (\Lambda_K)_q$ which preserves orientations.
%For these choices, especially the orientation of $(\Lambda_K)_q$, we refer to \cite[Section 3.3.1]{EENS} about the identification of $S^1_{\lambda}\times S^1_{\mu}$ with $\Lambda=\Lambda_U$ for the unknot $U$ in a standard position. In general, when $K$ is in a tubular neighborhood of $U$ and given as the closure of a braid, the choice of $l$ and $m$ for $K$ are determined naturally from those for $U$ up to homotopy.

We denote $\lambda= e^{[l]}$ and $\mu=e^{[m]}$, then $\Z[H_1(\Lambda_K)] = \Z[\lambda^{\pm},\mu^{\pm}]$.

\subsubsection{Framed knot DGA}
For any framed knot in $\R^3$, a DGA over $\Z[\lambda^{\pm},\mu^{\pm}]$, called the \textit{framed knot DGA}, is defined by \cite[Definition 2.8]{N}. (Before \cite{N}, a DGA over $\Z$ was defined in \cite{N1, N2}.) Note that there are several conventions from \cite[Definition 2.6]{N}, \cite[Section 1.2]{EENS} and \cite[Definition 3.11]{N-intro}, and they are related by replacing $\lambda$ and $\mu$ to their minus or inverse as summarized in \cite[Appendix]{N-intro}.
In this paper, we mainly refer to the last one by \cite{N-intro}, which \cite{c1, c2} refer to.

For an oriented knot $K$, let $(\mathcal{A}^{\mathrm{knot}}_*(K),\partial_K)$ denote the \textit{knot DGA} of an oriented knot $K$. See \cite[Definition 3.11]{N-intro} for the definition of a DGA over $\Z[\lambda^{\pm},\mu^{\pm},U^{\pm}]$ defined combinatorially from a braid representation of a knot $K$ (of a single component). If $U$ is substituted by $1$, then we get $(\mathcal{A}^{\mathrm{knot}}_*(K),\partial_K)$.
%Note that $\mathcal{A}^{\mathrm{knot}}_*(-K)$ is obtained from $\mathcal{A}^{\mathrm{knot}}_*(K)$ by replacing $(\lambda,\mu)$ by $(\lambda^{-1},\mu^{-1})$.

We consider a version of the Chekanov-Eliashberg DGA of $\Lambda_K$ over $\Z[H_1(\Lambda_K)]$ defined in the manner of \cite[Subsection 2.3]{EENS}
for a specific spin structure on $\Lambda_K$, which we denote by $\mathfrak{s}_{K}$.
For the specific choice of $\mathfrak{s}_K$, see Remark \ref{rem-spin} below. 
In fact, this DGA is fully non-commutative, that is, generated by elements of $H_1(\Lambda_K)$ and Reeb chords of $\Lambda_K$, and $A\in H_1(\Lambda_K)$ and Reeb chords do not commute.
We take the quotient of this DGA such that they commute.
As a DGA over $\Z[\lambda^{\pm},\mu^{\pm}]$, it is isomorphic to the DGA $(\mathcal{A}_*(\Lambda_K),\partial_{\mathfrak{s}_K})$ defined in Subsection \ref{subsec-app} via (\ref{diffeo-identify}). For more details, see Remark \ref{rem-isom-EENS}.

\begin{rem}\label{rem-spin}
We refer to \cite[Section 6.2]{EENS}.
Let $U$ be the unknot in $\R^3$. The zero framing gives a trivialization of $T\Lambda_U$ and $\mathfrak{s}_U$ is defined as the trivial spin structure. 
When $K$ is in a tubular neighborhood of $U$ and given as a closure of a braid, $\Lambda_K$ is contained in a $1$-jet neighborhood of $\Lambda_U$. Then $\mathfrak{s}_K$ is defined to be the pullback of $\mathfrak{s}_U$ by the projection in $J^1\Lambda_U$ of $\Lambda_K$ to the zero section $\Lambda_U$.
\end{rem}

\begin{rem}\label{rem-isom-EENS}
For a generic knot $K$ in $\R^3$, every Reeb chord $a\in \mathcal{R}(\Lambda_K)$ is non-degenerate and has degree $|a|\in \{0,1,2\}$, as discussed in \cite[Section 6.1]{CELN}. Let $\psi \colon \mathcal{A}_*(\Lambda_K) \to \mathcal{A}_*(\Lambda_K)$ be an isomorphism of graded $\Z[\lambda^{\pm},\mu^{\pm}]$-algebras determined by $\psi(a) \coloneqq (-1)^{\tau(a)}a$ for every $a\in \mathcal{R}(\Lambda_K)$, where $\tau(a)=0$ if $|a|\in \{0,1\}$ and $\tau(a)=1$ if $|a|=2$. Then, $(\psi^{-1}\circ \partial_{\mathfrak{s}_K}\circ \psi)(a)$ is given by the left-hand-side of (\ref{diff-DGA}) with the sign $(-1)^{(n-1)(|a|+1)}$ omitted (here, $n=2$). This coincides with the definition of the differential of the DGA in \cite[Subsection 2.3.4]{EENS}.
\end{rem}

%Referring to \cite{EENS}, a spin structure is determined as follows: the framing of $K$ gives a diffeomorphism $\Lambda_K\cong (\R/\Z\times \R/\Z)$. This induces  a trivialization$T\Lambda_K \cong  \underline{\R^2}$ and then we choose a trivial spin structure on $\underline{\R^2}$.

\subsubsection{Constraints via Floer cohomology}

We return to consider a Hamiltonian diffeomorphism $\varphi = \varphi_H \in \Ham_c(T^*\R^3)$ for a Hamiltonian $H$ on $T^*\R^3$ with a compact support.
Let $K_0$ be an oriented knot in $\R^3$. Suppose that $\R^3$ and $\varphi(L_{K_0})$ have a clean intersection along $K=\R^3 \cap \varphi(L_{K_0})$. 
As explained in Example \ref{ex-gr}, the orientations of $L_{K_0}$ and $L_K$ are determined by the fixed orientation of $\R^3$. $\varphi(L_{K_0})$ is also oriented so that $\varphi\colon L_{K_0} \to \varphi(L_{K_0})$ preserves orientations.
\begin{lem}\label{lem-knot-ori}
Suppose that $K$ is connected. Then, $K$ is a knot in $\R^3$. Moreover, the embedding map (\ref{inclusion-clean}) for $L_1=\varphi (L_{K_0})$ preserves the orientations.
\end{lem}
\begin{proof}
By Lemma \ref{lem-neighborhood}, we may assume that $\varphi_H(L_{K_0}) \cap D^*_{\epsilon}\R^3= L_K \cap D^*_{\epsilon}\R^3$ for some compactly supported Hamiltonian $H$ on $T^*\R^3$.  Then, (\ref{inclusion-clean}) is the inclusion map
\[L_K \cap D^*_{\epsilon}\R^3 \to \varphi_H(L_{K_0}) .\]
We apply Proposition \ref{prop-Floer} and the invariance of Floer cohomology by $\varphi=\varphi_H$. If $K$ is connected, then for the graded Lagrangian submanifolds $\tilde{\R^3}$ and $\tilde{L}_{K_0}$ of Example \ref{ex-gr}, we have
\begin{align}\label{isom-K}
H^{*+\sigma_{K_0}}(K_0;\Z/2)\cong HF^*(\tilde{\R^3},\tilde{L}_{K_0}) \cong HF^*(\tilde{\R^3},\varphi_H(\tilde{L}_{K_0})) \cong H^{*+\sigma + \sigma_K}(K;\Z/2).
\end{align}
Here, $\sigma_{K},\sigma_{K_0}\in \Z$ are the integers from Remark \ref{rem-shift-degree} and $\sigma\in \Z$ is the difference of gradings of $\varphi_H(\tilde{L}_{K_0})$ and $\gr_{L_K}$ on $L_K \cap D^*_{\epsilon}\R^3$.
The isomorphism (\ref{isom-K}) shows that $\dim K=1$, so $K$ is a knot as well.

From the property of $\sigma_K$ and $\sigma_{K_0}$ discussed in Remark \ref{rem-shift-degree}, $\sigma_K=\sigma_{K_0}$. Indeed, for any tubular neighborhoods $N_0$ and $N$ of $K_0$ and $K$ in $\R^3$ respectively, we have a diffeomorphism of pairs $(N_0,K_0) \cong (N,K)$, by which $(T^*N_0, \tilde{N_0},\tilde{L}_{K_0})$ is identified with $(T^*N, \tilde{N}, \tilde{L}_K)$.
Therefore, (\ref{isom-K}) implies that $H^*(K_0;\Z/2) \cong H^{*+\sigma}(K;\Z/2)$, and thus  $\sigma=0$. This means that the two orientations $\mathfrak{o}_{\tilde{L}_{K}}$ and $\mathfrak{o}_{\varphi_H(\tilde{L}_{K_0})}$ coincide on $L_K \cap D^*_{\epsilon}\R^3$.
By Example \ref{ex-gr}, these two orientations coincide respectively with the orientations of $L_{K_0}$ and $\varphi_H(L_{K_0})$ fixed just before this lemma.
\end{proof}

Let $i_K\colon K \to \varphi(L_{K_0})$ denote the inclusion map as Proposition \ref{prop-Floer}.
\begin{lem}\label{lem-inc-isom}
If $K$ is connected, then
\[(i_K)^*\colon H^*(\varphi(L_{K_0});\Z/2) \to  H^*(K;\Z/2) \]
is an isomorphism.
\end{lem}
\begin{proof}
By Proposition \ref{prop-Floer} and the invariance of Floer cohomology of $\varphi =\varphi_H$,
\begin{align}\label{isom-L_K}
H^*(L_{K_0};\Z/2)\cong HF^*(\tilde{L}_{K_0},\tilde{L}_{K_0}) \cong HF^*(\varphi_H(\tilde{L}_{K_0}), \varphi_H(\tilde{L}_{K_0})) \cong H^*(\varphi(L_{K_0});\Z/2).
\end{align}
Since the diagram (\ref{diagram-m2}) is commutative, the isomorphisms (\ref{isom-K}) and (\ref{isom-L_K}) intertwine the cup products of Proposition \ref{prop-Floer}.
In particular,
\[(i_K)^*\colon H^*(\varphi(L_{K_0});\Z/2) \to  H^*(K;\Z/2) \colon \xi \mapsto (i_K)^*(\xi) \cup 1\]
is an isomorphism as well as the map $(i_{K_0})^*\colon H^*(L_{K_0};\Z/2) \to H^*(K_0;\Z/2)$.
\end{proof}

Via the two embeddings of (\ref{boundary-embedding})
\[ \begin{array}{cc} i'_+\colon \Lambda_{K_0}\to \varphi(L_{K_0})\setminus K,&  i'_- \colon \Lambda_{K}\to \varphi(L_{K_0})\setminus K, \end{array}\]
we think of $\Lambda_{K_0}$ and $\Lambda_{K}$ as embedded tori in $ \varphi(L_{K_0})\setminus K$.
We fix an orientation of $K$.
Then, $H_1(\varphi(L_{K_0})\setminus K)$ is a free $\Z$-module of rank $2$, whose basis is given by $\{[l_0],[m_0]\}$, where
\[l_0\coloneqq i'_+\circ l,\ m_0\coloneqq  i'_-\circ m .\]
Moreover, if we define $a,b\in \Z$ by
\[(i'_-)_*([l])=  a [l_0] + b [m_0] ,\]
 then
\[ (i'_+)_*([m]) =  (-1)^{\sigma} a[m_0] ,\]
where $(-1)^{\sigma}$ is the orientation sign of (\ref{inclusion-clean}). By Lemma \ref{lem-knot-ori}, it follows that $(-1)^{\sigma}=1$.
In addition, since $(i_K)^*$ is an isomorphism over $\Z/2$ by Lemma \ref{lem-inc-isom} and $(i_K)_*([K])= a[K_0]\in H_1(\varphi(L_{K_0});\Z/2)$, it follows that $a\equiv 1 \mod 2$.

In summary, $i'_{\pm}$ induce ring maps between the group rings described as follows: For $\lambda_0\coloneqq e^{[l_0]}$ and $\mu_0\coloneqq e^{[m_0]}$,
\begin{align*}
&R_+ \coloneqq \Z[H_1(\Lambda_+)] =\Z[\lambda^{\pm}, \mu^{\pm}], \ R_-  \coloneqq \Z[H_1(\Lambda_-)] =\Z[\lambda^{\pm}, \mu^{\pm}], \\
&R_0  \coloneqq \Z[H_1(\varphi(L_{K_0}) \setminus K)] = \Z[(\lambda_0)^{\pm}, (\mu_0)^{\pm}], \\
&(i'_+)_*\colon R_+\to R_0 \colon  \lambda \mapsto \lambda_0 ,\ \mu \mapsto  (\mu_0)^{ a}, \\
&(i'_-)_*\colon R_- \to R_0 \colon  \lambda \mapsto (\lambda_0)^a (\mu_0)^b ,\ \mu \mapsto  \mu_0 ,
\end{align*}
where $a,b\in \Z$ and $a \equiv 1 \mod 2$.

\subsubsection{Spin structures}
Let us observe the difference of spin structures. 
Let $\mathfrak{s}'_K$ be the spin structure on $\Lambda_K$ as the positive end of the filling $L_K\cap D^*_1\R^3 \cong S^1\times D^2$ whose spin structure is given by the trivial $\Spin(3)$-bundle. See Remark 3.6 by settng $\Lambda_+=\Lambda_K$ and $\Lambda_-=\emptyset$.
This differs from $\mathfrak{s}_K$ by $d(\mathfrak{s}_K,\mathfrak{s}'_K) \in H^1(\Lambda_K;\Z/2)$ which maps $[l]$ to $0$ and $[m]$ to $1$. 
Likewise, we define the spin structure $s'_{K_0}$ on $\Lambda_{K_0}$.
Then, $\mathfrak{s}'_{K_0}$ extends to a trivial spin structure on $\varphi(L_{K_0})$, and the spin structure on $\Lambda_K$ as the negative end of $\varphi(L_{K_0})\setminus K$ coincides with $\mathfrak{s}'_K$.
Here, note that the underlying orientation of $\mathfrak{s}'_K$ coincides with the orientation induced on $\Lambda_{K}$ as the negative end of $\varphi(L_{K_0})\setminus K$ as described in Remark \ref{rem-spin-boundary}.
This follows from the assertion in Lemma \ref{lem-knot-ori} about orientations.
%or $\overline{{\mathfrak{s}'_K}}$ (see Remark \ref{rem-spin-change}).
In addition, the ring isomorphisms defined by (\ref{spin-change}) are
\begin{align*}
\psi (\mathfrak{s}_{K_0},\mathfrak{s}'_{K_0}) \colon R_+\to R_+\colon \lambda \mapsto \lambda,\ \mu\mapsto -\mu, \\
\psi (\mathfrak{s}_{K},\mathfrak{s}'_{K}) \colon R_-\to R_-\colon \lambda \mapsto \lambda,\ \mu\mapsto -\mu.
\end{align*}

By \cite[Appendix]{N-intro}, $r_{\pm}\colon R_{\pm}\to R_{\pm} \colon \lambda \mapsto -\lambda,\ \mu\mapsto -\mu$ relates the two conventions of \cite[Definition 3.11]{N-intro} and \cite[Section 1.2]{EENS}.
Let us define ring maps
\[\begin{array}{cc} I_+\coloneqq (i'_+)_*\circ \psi (\mathfrak{s}_{K_0},\mathfrak{s}'_{K_0}) \circ r_+, &  I_-\coloneqq (i'_-)_*\circ\psi (\mathfrak{s}_{K},\mathfrak{s}'_{K})\circ r_-. \end{array}\]

\subsubsection{Proof of main theorem}

Let $K_0$ and $K$ be oriented knots in $\R^3$. Let $\varphi \in \Ham_c(T^*\R^3)$. Suppose that $\varphi(L_{K_0})$ and the zero section $\R^3$ have a clean intersection along $K$.
In this setting, we get the following application of Theorem \ref{thm-clean-intersection}.

\begin{prop}\label{prop-kcDGA-map}
There exist integers
\[
a,b\in \Z \text{ with } a\equiv 1 \mod 2,
\]
two ring maps
\begin{align*}
&I_+\colon R_+ \to  R_0  \colon  \lambda \mapsto -\lambda_0 ,\ \mu \mapsto  (\mu_0)^{ a}, \\
&I_-\colon R_- \to R_0 \colon  \lambda \mapsto -(\lambda_0)^a (\mu_0)^b ,\ \mu \mapsto  \mu_0, 
\end{align*}
and a DGA map over $R_+$
\[ \Phi_{\varphi} \colon \mathcal{A}_*^{\mathrm{knot}}(K_0) \to \mathcal{A}_*^{\mathrm{knot}}(K)\otimes_{R_-} R_0. \] 
Here, the target object becomes a DGA over $R_+$ by $I_+$ and $I_-$ in the same way as (\ref{base-change}).
\end{prop}
\begin{proof}
Let $(\mathcal{A}_*(\Lambda_{K_0}),\partial_{\mathfrak{s}'_{K_0}})$ and $(\mathcal{A}_*(\Lambda_K), \partial_{\mathfrak{s}'_{K}})$ be the Chekanov-Eliashberg DGAs  defined with the choice of spin structures $\mathfrak{s}'_{K_0}$ and $\mathfrak{s}'_K$ respectively.
Theorem \ref{thm-clean-intersection} shows that there exists a DGA map over $R_+$
\begin{align}\label{DGA-map-pre}
 (\mathcal{A}_*(\Lambda_{K_0}),\partial_{\mathfrak{s}'_{K_0}}) \to (\mathcal{A}_*(\Lambda_K)\otimes_{R_-} R_0,\partial_{\mathfrak{s}'_K}\otimes \id_{R_0}),
 \end{align}
where the target object becomes a DGA over $R_+$ by $(i'_+)_*$ and $(i'_-)_*$.

For a general oriented knot $K$ in $\R^3$, the knot DGA $(\mathcal{A}_*^{\mathrm{knot}}(K),\partial_K)$ is related to $(\mathcal{A}_*(\Lambda_K), \partial_{\mathfrak{s}'_K})$ as follows: Consider the combinatorial framed knot DGA defined in \cite[Subsection 1.2]{EENS} over $\Z[\lambda^{\pm},\mu^{\pm}]$. We denote by $(\hat{\mathcal{A}}^{\mathrm{knot}}_*(K), \hat{\partial}_K)$ its quotient so that $\lambda$ and $\mu$ commute with all elements. Then, we have the following diagram:
\begin{align}\label{isom-of-DGAs}
\begin{split}
 \xymatrix@C=60pt{
(\mathcal{A}^{\mathrm{knot}}_*(K),\partial_K) \ar[r]^-{(\lambda,\mu)\mapsto (-\lambda,-\mu)}  &  (\hat{\mathcal{A}}^{\mathrm{knot}}_*(K), \hat{\partial}_K) \ar@<0.5ex>[d] \\
 (\mathcal{A}_*(\Lambda_{K}),\partial_{\mathfrak{s}'_K})  & (\mathcal{A}_*(\Lambda_{K}),\partial_{\mathfrak{s}_K})  \ar@<0.5ex>[u] \ar[l]_-{\psi(\mathfrak{s}_K,\mathfrak{s}'_K)} %\ar[l]_{\psi(\mathfrak{s}_K,\mathfrak{s}'_{K})} 
}
\end{split}
\end{align}
Here, the vertical two maps are DGA maps over $\Z[\lambda^{\pm},\mu^{\pm}]$ between DGAs which are stable tame isomorphic to each other by \cite[Corollary 1.2]{EENS}. The horizontal maps are isomorphisms of DGAs over $\Z$ which commute with the isomorphisms between the coefficient rings. For the upper one, we refer to \cite[Appendix]{N-intro}. For the lower one, see Proposition \ref{prop-change-spin} about changing the spin structures.

Return to the knot DGAs of $K_0$ and $K$.
The proposition follows from (\ref{DGA-map-pre}) and the diagram (\ref{isom-of-DGAs}) for $K_0$ and $K$.
\end{proof}

\begin{rem}\label{rem-a-pm} $\R^3$ and $\varphi(L_{K_0})$ admit brane structures with trivial Pin structures (i.e. those given by trivial $\operatorname{Pin}(3)$-bundles). 
%Then, the two brane structures on the knot $K$ defined in Remark \ref{rem-Z-coefficient} have the same trivial Pin structures and differ only by the gradings.
Therefore, as mentioned in Remark \ref{rem-Z-coefficient}, we expect that Proposition \ref{prop-Floer} is extended to $\Z$-coefficients. If it is confirmed, we can prove that $(i_K)^*$ induces an isomorphism over $\Z$, and thus $a=\pm 1$. In particular, $I_{\pm}$ will be proved to be an isomorphism.
\end{rem}

We introduce other knot invariants related to the knot DGA.

The first invariant is a $\Z[\lambda^{\pm},\mu^{\pm}]$-algebra called the \textit{cord algebra} of an oriented knot $K$. This algebra is defined topologically and generated by the set $\pi_1(\R^3\setminus K)$. See \cite[Definition 4.4]{N-intro}.  Let $\mathrm{Cord}(K)$ denote this algebra. One may also refer to \cite[Section 2.2]{CELN} for a fully non-commutative version. By \cite[Theorem 4.7]{N-intro},
% (c.f. \cite[Theorem 2.10]{N} using a different convention), 
$\mathrm{Cord}(K)$ is isomorphic to $\mathcal{A}_0^{\mathrm{knot}}(K)/ \partial_K(\mathcal{A}_1^{\mathrm{knot}}(K))$, that is, the $0$-th degree part of the homology of the knot DGA. Here, we remark that $\mathcal{A}^{\mathrm{knot}}_p(K)=0$ for $p<0$. 
The next corollary is obvious from Proposition \ref{prop-kcDGA-map}.
\begin{cor}
There exists an $R_+$-algebra map from $\mathrm{Cord}(K_0)$ to $\mathrm{Cord}(K) \otimes_{R_-} R_0$ which becomes an $R_+$-algebra by $I_+$ and $I_-$ in the same way as (\ref{base-change}).
\end{cor}

The second invariant is related to the augmentations of DGAs.

\begin{defi}\label{defi-aug}Let $\bold{k}$ be a field.
For an oriented knot $K$, let $\aug(K; \bold{k})$ be the set of ring maps $\epsilon \colon \mathcal{A}^{\mathrm{knot}}_0(K)\to \bold{k}$ such that $\epsilon(1)=1$ and $\epsilon \circ \partial_K(a) =0$ for all $a\in \mathcal{A}^{\mathrm{knot}}_1(K)$. Such a ring map is called an \textit{augmentation} of $\mathcal{A}^{\mathrm{knot}}_*(K)$ over $\bold{k}$.
%Note that its cardinality $\# \aug (K; \bold{k})$ is independent of the orientation of $K$.
We also define a subset of $(\C^*)^2$
\begin{align*}
V_K\coloneqq \{(x ,y) \in (\C^*)^2 \mid \text{There exists }\epsilon \in \aug(K;\C)  \text{ such that }(\epsilon(\lambda), \epsilon(\mu))=(x,y) \}.
\end{align*}
It is called the \textit{augmentation variety} of $K$.
\end{defi}
\begin{rem}
We may refer to \cite[Definition 5.4]{N} using different conventions.
The above definition of $V_K$ is just equal to the intersection of the subspace in $(\C^*)^3$ defined by \cite[Definition 5.1]{N-intro} with $\{(x,y,U)\mid U=1\}$.
In addition, $V_K$ is independent of the orientation of $K$. Indeed, by \cite[Proposition 4.1]{N}, there is a bijection $V_K \to V_{-K}\colon (x,y) \mapsto (x^{-1},y^{-1})$, where  $-K$ denote the knot $K$ equipped with the opposite orientation.
\end{rem}

The main theorem (Theorem \ref{thm-intro-aug}) is deduced from Proposition \ref{prop-kcDGA-map}.

\begin{thm}\label{cor-aug}
For any knots $K_0$ and $K$ in $\R^3$ and
 $\varphi \in \Ham_c(T^*\R^3)$, suppose that $\R^3$ and $\varphi(L_{K_0})$ have a clean intersection along $K$.
Then, there exist $a,b\in \Z$ with $a\equiv 1 \mod 2$ such that the set
\[\{( x' ,y') \in (\C^*)^2 \mid \text{ there exists } (x,y)\in V_{K} \text{ such that } (x')^a = x y^{-b} \text{ and }y'= y^{ a} \}. \]
is contained in $V_{K_0}$.
\end{thm}
\begin{proof}
%The first claim is clear from \cite[Theorem 4.7]{N-intro}. We prove the second claim. 
Suppose that $(x,y) = (\epsilon(\lambda), \epsilon (\mu))$ for $\epsilon \in \aug (K;\C)$ and $x'\in \C^*$ satisfies $(x')^a= xy^{-b}$. Then we define a ring map
\[ \delta\colon R_0\to \C \colon  \lambda_0 \mapsto -x',\ \mu_0 \mapsto y. \]
Since $a$ is an odd integer,
\begin{align*}
\delta\circ I_-(\lambda) &= \delta( -(\lambda_0)^a(\mu_0)^b) = -(-x')^ay^b=x, \\
\delta\circ I_-(\mu) &= \delta(\mu_0) =y ,
\end{align*}
so we have $\delta( I_-(k)) = \epsilon (k)$ for every $k\in R_-$. Therefore, we have a well-defined ring map
\[\tilde{\epsilon}\colon \mathcal{A}^{\mathrm{knot}}_0(K)\otimes_{R_-} R_0 \to \C \colon \xi \otimes \eta \mapsto \epsilon(\xi)\cdot \delta(\eta),\]
for which $\tilde{\epsilon} (1\otimes 1)=1$ and $\tilde{\epsilon}\circ (\partial_K\otimes \id_{R_0})=0$ hold.
Then, $\tilde{\epsilon}\circ \Phi_{\varphi} \in \aug(K_0;\C)$ for the DGA map $\Phi_{\varphi}$ of Proposition \ref{prop-kcDGA-map} and it satisfies
\begin{align*}
&\tilde{\epsilon}\circ \Phi_{\varphi} (\lambda)= \epsilon(1)\cdot \delta(I_+(\lambda)) = \delta(-\lambda_0)=x', \\
&\tilde{\epsilon}\circ \Phi_{\varphi} (\mu)= \epsilon(1)\cdot \delta(I_+(\mu)) = \delta((\mu_0)^{a})=y^{ a}.
\end{align*}
Therefore, $(x',y^{ a})\in V_{K_0}$.
\end{proof}

\subsection{Constraints on knot types by Theorem \ref{cor-aug}}
Throughout this subsection, $K_0,K$ are knots in $\R^3$ and $ \varphi \in \Ham_c(T^*\R^3)$, and we suppose that $\R^3$ and $\varphi(L_{K_0})$ have a clean intersection along $K$.
Theorem \ref{cor-aug} can be used to give strong constraints on the knot types of $K_0$ and $K$.

\begin{prop}\label{cor-unknot}
Suppose that $K_0$ is the unknot. Then, $K$ is also the unknot.
\end{prop}
\begin{proof}
We show that if $K$ is not the unknot, then $V_{K}$ contains $\{x=1\}\cup \{y=1\}$ as a proper subset. (This part is not new. See Remark \ref{rem-unknot} below). This assertion follows from the next four results about a general knot $K$: 
%(one may refer to \cite[Corollary 5.10]{N-intro} using the same convention as the present paper
\begin{itemize}
\item $\{x=1\}\cup \{y=1\} \subset V_{K}$ (see \cite[Proposition 5.6]{N} and \cite[Exercise 3.17]{N-intro}).
\item The Zariski closure of $V_{K}$ in $(\C^*)^2$ contains
\[\{(x,y^2) \mid (1-y^2) A_K(x,y)=0\},\]
where $A_K(x,y)$ is the $A$-polynomial of $K$ (see \cite[Proposition 5.9]{N} and \cite[Theorem 5.9]{N-intro}).
\item $A_K(x,y)$ is not divisible by $(y-1)$ and $(y+1)$ (see \cite[Section 2.8]{CCGLS} and \cite[Proposition 5.9]{N}).
\item If $K$ is not the unknot, then $A_K(x,y)$ is not a power of $(x-1)$. (see \cite[Theorem 1.1]{DG}).
\end{itemize}
In addition, the augmentation variety of the unknot $K_0$ is $\{x=1\}\cup \{y=1\}$. See \cite[Example 3.13]{N-intro}.

By Theorem \ref{cor-aug},
$V_{K_0}=\{x=1\}\cup \{y=1\}$ contains $\{(x',-1)\mid (x')^a=(-1)^{-b}\}$ since $(1,-1)\in V_{K}$ and $a\equiv 1 \mod 2$. This implies that the following holds for every $x'\in \C^*$:
\[ (x')^a=(-1)^{-b} \Rightarrow (x',-1) \in V_{K_0} \cap \{y=-1\} =\{(1,-1)\} \Leftrightarrow x'=1.\]
This means that $a=\pm 1$.
In addition, $V_{K_0}=\{x=1\}\cup \{y=1\}$ contains $(2^{-b},2)$ since $(1,2^a)\in V_{K}$ and $a=\pm 1$.
This implies that $2^{-b}=1$ and thus $b=0$.

Applying Theorem \ref{cor-aug} again, since $a=\pm 1$ and $b=0$, we have
\[\{(x^a,y^{a})\mid (x,y) \in V_{K}\} \subset \{x=1\}\cup \{y=1\} . \]
Therefore, $V_{K} \subset \{x=1\}\cup \{y=1\}$, and thus $V_{K} = \{x=1\}\cup \{y=1\}$. This means that $K$ is the unknot.
\end{proof}

\begin{rem}\label{rem-unknot}
The proof that $\{x=1\}\cup \{y=1\} \neq V_K$ if $K$ is not the unknot is essentially contained in the proof of \cite[Proposition 5.10]{N}, which states that the cord algebra distinguishes the unknot from any other knot. We reviewed the proof for clarity.
\end{rem}

Next, let $3_1$ denote the right-hand trefoil knot and $\overline{3_1}$ denote its mirror. By \cite[Section 5.2]{N} and \cite[Exercise 4.6]{N-intro},
\[V_{3_1} = \{x=1\} \cup \{y=1\} \cup \{xy^3+1=0\},\ V_{\overline{3_1}} = \{x=1\} \cup \{y=1\} \cup \{x+y^{3}=0\}.\]
For any $m,m'\in \Z_{\geq 0}$, we denote by $K_{m,m'}\coloneqq (3_1)^{\# m} \# (\overline{3_1})^{\# m'}$ the connected sum of $m$ $3_1$ knots and $m'$ $\overline{3_1}$ knots. By the formula for connected sums of knots \cite[Proposition 5.8]{N},
\[V_{K_{m,m'}} = \{y=1\}\cup \bigcup_{k=-m'}^m \{x (-y)^{3k}=1\}.\]
\begin{prop}\label{prop-trefoil}
Suppose that $K_0= K_{m_0,m'_0}$ and $K=K_{m,m'}$. Then, $m_0+m'_0\geq m+m'$. Moreover, if $m_0+m'_0=m+m'$, then $m\equiv m_0 \mod 2$.
\end{prop}
\begin{proof}
We apply Theorem \ref{cor-aug}. First, $V_{K_{m_0,m'_0}}$ contains
$\{(x',-1) \mid (x')^a=(-1)^{-b}\}$ since $(1,-1)\in V_{K_{m,m'}}$and $a\equiv 1 \mod 2$.
This implies that the following holds for every $x'\in \C^*$:
\[ (x')^a=(-1)^{-b} \Rightarrow (x',-1)\in V_{K_{m_0,m'_0}} \cap \{y=-1\} = \{(1,-1)\} \Leftrightarrow x'=1.\]
This means that $a=\pm 1$ and $b \equiv 0 \mod 2$. Next, we have the following inclusions for subsets of $(\C^*)^2$: For any $k\in \Z$ with $ -m'\leq k\leq m$, 
\begin{align*}
& \{(x',y')\mid x'(-y')^{3k+b}=1\}  \\
\subset & \{(x',y')\mid x(-y)^{3k}=1 \text{ for } x\coloneqq (x')^ay^b\text{ and } y\coloneqq (y')^a \} \\ 
 \subset & \{(x',y')\mid \text{There exists }(x,y)\in V_{K_{m,m'}}\text{ such that }  (x')^a=x y^{-b}\text{ and } y'= y^a \} \\
\subset & V_{K_{m_0,m'_0}}.
\end{align*}
For the first inclusion, we use $a\equiv 1$ and $b\equiv 0 \mod 2$. For the second inclusion, we use $a=\pm 1$. For the third inclusion, we apply Theorem \ref{cor-aug}. Therefore, we have
\[  \bigcup_{k=-m'}^m \{x (-y)^{3k+b}= 1\} 
\subset  V_{K_{m_0,m'_0}} = \{y=1\}\cup \bigcup_{k=-m'_0}^{m_0} \{x (-y)^{3k}=1\} \]
This implies that $
3m+b\leq 3m_0$ and $ -3m'+b\geq -3m'_0$. In particular, $m_0+m'_0\geq m+m'$. 

Next, suppose that  $m_0+m'_0=m+m'$. Then the equalities $m+\frac{b}{3}=m_0$ and $-m'+\frac{b}{3}=-m'_0$ hold, which means that $\frac{b}{3}$ is an integer and $m\equiv m_0 \mod 2$ since $b\equiv 0 \mod 2$.
\end{proof}

\begin{rem}
By Proposition \ref{prop-trefoil}, if $K_0=3_1$, then $K\neq \overline{3_1}$.
This shows the necessity of the assumption that $\varphi$ has a compact support. Indeed, by the identification via the map $T^*\R^3 \to \C^3\colon  ((q_1,q_2,q_3), (p_1,p_2,p_3)) \mapsto (q_i+\sqrt{-1}p_i)_{i=1,2,3}$, we can define a Hamiltonian isotopy $(\varphi^t)_{t\in [0,1]}$ to be
$\varphi^t \colon T^*\R^3 \to T^*\R^3\colon (z_1,z_2,z_3)\mapsto (e^{\sqrt{-1}\pi t}z_1,z_2,z_3) $. 
Then, for any knot $K$ in $\R^3$, $\varphi^1(L_{K})$ and $\R^3 $ have a clean intersection along the mirror of $K$. 
Clearly, the support of this Hamiltonian isotopy is $T^*\R^3$, which is non-compact.
\end{rem}

Next, given $p,q\in \Z$ which are coprime and satisfy $1\leq p <|q|$, let $T_{(p,q)}$ denote the $(p,q)$-torus knot. Then, we claim that
\[V_{T_{(p,q)}} = \{y=1\}\cup \{xy^{pq-q}=(-1)^{p-1}\} \cup \bigcup_{k=1}^{p-1} \{x^ky^{(k-1)pq}=1\}.\]
We refer to \cite{c1, c2}. The claim follows from results below:
\begin{itemize}
\item 
For a general oriented knot $K$ in $\R^3$, its augmentation variety is equal to $\{y=1\}\cup U_K$, where $U_K$ is a subset of $\C^*\times (\C^*\setminus \{1\})$ determined by the set of all irreducible \textit{knot contact homology (KCH) representations} of $\pi_1(\R\setminus K)$. See \cite[Theorem 1.2]{c2}.
\item By \cite[Theorem 1.3]{c1} when $1\leq p <q$, $U_{T_{(p,q)}}$ is equal to the zero locus of the polynomial
\[(xy^{(p-1)q}+(-1)^p)\prod_{k=1}^{p-1} (x^ky^{(k-1)pq}-1).\]
To be precise, this polynomial from \cite[Theorem 1.3]{c1} is defined so that its zero locus is equal to the Zariski closure of $U_K$ for $K=T_{(p,q)}$. However, its proof in \cite[Subsection 5.1]{c1} determines $U_{T_{(p,q)}}$ itself, and it is a closed subset.
\item When $1\leq p < -q$, $T_{(p,q)}$ is the mirror of $T_{(p,-q)}$, so $V_{T_{(p,q)}}= \{(x,y^{-1})\mid (x,y)\in V_{T_{(p,-q)}}\}$ by \cite[Section 4.1]{N}.
\end{itemize}
\begin{prop}\label{prop-torus}
Suppose that $K_0=T_{(p_0,q_0)}$ and $K=T_{(p,q)}$. Then, $p_0\geq p$. Moreover, if $p_0=p\geq 2$, then $q_0= q$.
\end{prop}
\begin{proof} To prove that $p_0\geq p$, we may assume that $p\geq 3$ since Proposition \ref{cor-unknot} shows that $p=1$ if $p_0=1$.
For any $k\in \Z\setminus \{0\}$, let $\zeta_k\coloneqq e^{\frac{2\pi\sqrt{-1}}{k}}$, $\tau_{a,k}\coloneqq a^{-2} \left( \frac{(k-1)pq}{k}+b\right)$ and
\[ P_{a,k}\coloneqq (2^{-\tau_{a,k}}\zeta_{ak}  ,2) \in (\C^*)^2.\]
\begin{rem} 
The following argument works if we define $P_{a,k}=(R^{-\tau_{a,k}}\zeta_{ak},R)$ for any $R\in \R_{>0}$. Here, we take $R=2$ for simplicity.
\end{rem}
Then, for $k=1,\dots ,p-1$, we have
\begin{align}\label{P_ak-inclusion}
\begin{split}
P_{a,k}\in & \{(x',2)\mid (x')^{a}= x (2^{\frac{1}{a}})^{-b} \text{ for } x\in \C^* \text{ satisfying }  x^{k} (2^{\frac{1}{a}})^{(k-1)pq}=1 \} \\
\subset & \{(x',y^{ a})\mid (x')^a=xy^{-b} \text{ for }(x,y) \in V_{T_{(p,q)}}\}\\
\subset & V_{T_{(p_0,q_0)}} .
\end{split}
\end{align}
The last inclusion comes from Theorem \ref{cor-aug}.

If $|a(p-1)|\geq 3$, then $\zeta_{a(p-1)} \notin \R$ and thus
\[P_{a,p-1} \notin \{y=1\} \cup \{xy^{p_0q_0-q_0}=(-1)^{p_0-1}\}.\]
Therefore, $P_{a,p-1}$ must be contained in $\bigcup_{k=1}^{p_0-1}  \{x^k y^{(k-1)p_0q_0}=1\} $. In particular, there exists $k\in \{1,\dots ,p_0-1\}$ such that $(\zeta_{a(p-1)})^k\in \R_{>0}$, which means that $|a(p-1)|\leq p_0-1$. Then, it follows that $p\leq p_0$.

If $|a(p-1)|=2$, we have $a=\pm 1$ and $p=3$. Assume that $p_0=2$. Then $\tau_{a,2}=  \frac{3q}{2} + b$ and
\[P_{a,2}= (-2^{-(\frac{3q}{2}+b)},2) \in V_{T_{(2,q_0)}}= \{y=1\} \cup \{xy^{q_0}=-1\} \cup \{x=1\}.\]
Thus, $P_{a,2}$ must be contained in $\{xy^{q_0}=-1\}$, which means that $q_0=\frac{3q}{2}+b$. On the other hand, for a similar reason as $P_{a,2}$, we have
\begin{align*}
(2^{-(\frac{3q}{2}+b)},2) \in & \{(x',2)\mid (x')^{a}= x(2^{\frac{1}{a}})^{-b} \text{ for } x\in \C^* \text{ satisfying }  x^{2} (2^{\frac{1}{a}})^{3q}=1 \} \\
\subset & \{(x',y^a)\mid (x')^a=xy^{-b} \text{ for }(x,y) \in V_{T_{(3,q)}}\}\\
\subset & V_{T_{(2,q_0)}} .
\end{align*}
It follows that $(2^{-q_0},2)\in V_{T_{(2,q_0)}}$, but this is a contradiction. Therefore, $p_0\geq 3=p$.

Next, suppose that $p_0=p$. Then, we have $a =\pm 1$ from $|a(p-1)|\leq p_0-1$ and $p_0=p$.
If $p\geq 3$, then $(-1)^{p-1} \zeta_{a(p-1)}\notin \R_{>0}$ and $(\zeta_{a(p-1)})^k \notin \R_{>0}$ for $k=1,\dots ,p-2$. This means that
\[P_{a,p-1} \in \{(x,2) \mid (-1)^{p-1}x\notin \R_{>0}\} \cup \bigcup_{k=1}^{p-2} \{(x,2) \mid x^k\notin \R_{>0}\} ,\]
and thus
\[ P_{a,p-1} \notin \{y=1\}\cup \{ x y^{pq-q} = (-1)^{p-1}\} \cup \bigcup^{p-2}_{k=1} \{ x^k y^{(k-1)pq=1}=1\}.\]
Therefore, $P_{a,p-1}\in V_{T_{(p,q_0)}}$ from (\ref{P_ak-inclusion}) implies that $P_{a,p-1} \in \{x^{p-1}y^{(p-2)pq_0}=1\}$. From the definition of $P_{a,p-1}$, we have $-(p-1)\tau_{a,p-1} + (p-2)pq_0=0$, which is equivalent to
\begin{align}\label{pqb}
 (p-1)b=(p-2)p(q_0- q).
\end{align}
\begin{itemize}
\item 
If $p\geq 5$, then $\zeta_{a(p-2)} \notin \R$ and thus
\begin{align}\label{P-inc}
P_{a,p-2} \notin \{y=1\} \cup \{xy^{pq_0-q_0}=(-1)^{p-1}\}.
\end{align}
Therefore, $P_{a,p-2}\in V_{T_{(p,q_0)}}$ from (\ref{P_ak-inclusion}) implies that $P_{a,p-2} \in \bigcup_{k=1}^{p-1} \{x^ky^{(k-1)pq_0}=1\}$. Moreover, we have $P_{a,p-2}\in \{x^{p-2}y^{(p-3)pq_0}=1\}$ since $(\zeta_{a(p-2)})^{k}\notin \R_{>0}$ for $k=1,\dots ,p-3,p-1$. Therefore, $(p-2)b= (p-3)p(q_0- q)$ holds.
Combining with (\ref{pqb}), we have $q_0=q$.
\item If $p=4$, then $P_{a,p-2}=P_{a,2}=(-2^{-(2q+b)},2)$. Assume that (\ref{P-inc}) does not holds for $p=4$. Then, $P_{a,2}\in \{xy^{3q_0}=-1\}$, which means that $3q_0=2q+b$. Combining with (\ref{pqb}), we have $q_0=-2 q$. Since $p=4$ and $q_0$ are coprime, this is a contradiction.  Therefore, (\ref{P-inc}) holds for $p=4$, and we can prove that $q_0=q$ in the same way as the case of $p\geq 5$.
\item If $p=3$, then $(2^{-(2q+b)},2)\in V_{T_{(3,q_0)}}$ from Theorem \ref{cor-aug} since $(2^{-2aq},2^{ a})\in V_{T_{(3.q)}}$. Therefore,
\[ (2^{-(2q+b)},2) \in  \{xy^{2q_0}=1\}\cup \{x=1\} \cup \{ x^2y^{3q_0}=1\} .\]
Thus, either $ b=2(q_0- q)$, $b=-2q$ or $b=  \frac{3 q_0}{2}-2 q$ holds. Assuming the first case, it is obvious that (\ref{pqb}) means $q_0= q$. Assuming the second or the third case, (\ref{pqb}) means that $-4 q=3(q_0- q)$ or $3q_0-4q=3(q_0-q)$. This is a contradiction since $p=3$ and $q$ are coprime.
\end{itemize}
If $p_0=p=2$, we have 
\[ \{ (-2^{-(q+b)},2), (2^{-b},2)\} \subset V_{(T_{(2,q_0)})} = \{y=1\} \cup  \{xy^{q_0}=-1\} \cup \{x=1\}\]
from Theorem \ref{cor-aug} since $\{(-2^{-a q},2^{ a}) , (1,2^{ a}) \}\subset V_{T_{(2,q)}}$.
This means that $q+b=q_0$ and $ b=0$, and thus $q_0= q$. This completes the proof.
\end{proof}

\begin{rem}
Theorem \ref{cor-aug} would be improved by showing $a=\pm 1$ as mentioned in Remark \ref{rem-a-pm}.
However, one can see that $a=\pm 1$ is obtained in the proof of Proposition \ref{cor-unknot} and \ref{prop-trefoil}. Thus, the outcome is not changed even if Theorem \ref{cor-aug} is improved by $a=\pm 1$.
About Proposition \ref{prop-torus}, it seems that the result is not improved significantly by $a=\pm 1$.
Indeed, we cannot deduce the equality $(p,q)=(p_0,q_0)$ (without assuming $p=p_0$ in advance) even when we assume $a=\pm 1$.

For example, consider the condition that $p\equiv p_0 \mod 2$, $q=rq_0$ for some $r\in \Z_{\geq 1}$ and
$r(p-k)+k$ divides $p_0$ for every $k=1,\dots ,p-1$.
When $q=q_0$ (i.e. $r=1$), it is equivalent to that $p\equiv p_0 \mod 2$ and $p$ divides $p_0$. When $p$ is small and $q\neq q_0$, $((p,q),(p_0,q_0))=((2,3m),(4,m))$ and $((3,7n),(45,n))$, where $m>4$ is an odd integer and $n>45$ is coprime to $45$, satisfiy this condition.
Then, under this condition on $(p,q)$ and $(p_0,q_0)$,
\begin{align*}
&\{xy^{pq-q+b}=(-1)^{p-1}\}\subset \{xy^{p_0q_0-q_0}=(-1)^{p_0-1}\},\\
&\{x^ky^{(k-1)pq +kb}=1\}\subset \{x^{l_k} y^{(l_k-1)p_0q_0}=1\}, \text{ where } l_k\coloneqq \frac{kp_0}{r(p-k)+k}\text{ for }k=1,\dots ,p-1,
\end{align*}
hold for $b=(p_0-1-rp+r)q_0$, and
it is straightforward to check that $V_{K_0}$ for $K_0=T_{(p_0,q_0)}$ and $V_{K}$ for $K=T_{(p,q)}$ satisfy the relation of Theorem \ref{cor-aug} for $a=\pm 1$, $b=(p_0-1-rp+r)q$.  
%Another examples is given by $((p_0,q_0),(p,q))=()$, in which case the relation is satisfied for $a=\pm 1$ and $b=0$.
\end{rem}

\appendix

\section{Appendix: Orientations of moduli spaces and proof of Theorem \ref{thm-DGA-map}}\label{sec-proof}
In this appendix, we fix orientations of moduli spaces and prove  Theorem \ref{thm-DGA-map} about the existence of a DGA map between Chekanov-Eliashberg DGAs over the integral group rings.
It is proved in a parallel way as the result of \cite{K} over $\Z$, but we use different conventions from \cite{EES-ori}.

\subsection{Notation and convention of signs}\label{subsec-convention}
For any finite dimensional $\R$-vector space $V$, let $\topwedge V$ denote the top exterior power $\bigwedge^{\dim V} V$.
We use the same conventions as in \cite[Subsection 3.2.1]{EES-ori} about the following isomorphisms:
\begin{itemize}
%\item How to determine the orientation of the dual space $V^*$ when an orientation of $V$ is given.
\item Given a vector space $V$, we define $\topwedge V \to \topwedge V^*$ up to multiplications by positive scalars by
\begin{align}\label{isom-dual}
v_1\wedge \dots \wedge v_n \mapsto v_1^*\wedge \dots \wedge v_n^*.
\end{align}
Here, $(v_1,\dots ,v_n)$ is a basis of $V$ and $(v_1^*,\dots ,v_n^*)$ is its dual basis.
\item Given an ordered set of vector spaces $\{V_1,\dots ,V_l\}$, we define
\[\topwedge (V_1\oplus \dots \oplus V_l) \to \topwedge V_1 \otimes \dots \otimes \topwedge V_l\]
by
\[v^1_1\wedge \dots \wedge v^1_{n_1}\wedge \dots \wedge  v^l_1\wedge \dots \wedge v^l_{n_l} \mapsto (v^1_1\wedge \dots \wedge v^1_{n_1})\otimes \dots \otimes ( v^l_1\wedge \dots \wedge  v^l_{n_l} ).\]
Here, $(v^k_1,\dots ,v^k_{n_k})$ is a basis of $V_k$ for $k=1,\dots ,l$.
\item Given an exact sequence
\begin{align}\label{original-exact-seq}
\xymatrix{
0\ar[r] & V_1 \ar[r]^-{\alpha} & W_1 \ar[r]^-{\beta} & W_2 \ar[r]^-{\gamma} & V_2 \ar[r] &0, }\end{align}
we define an isomorphism
\begin{align}\label{isom-exact}
\topwedge V_1\otimes \topwedge V_2^* \to \topwedge W_1 \otimes \topwedge W_2^*
\end{align}
by
\begin{align*}
&(v^1_1\wedge \dots \wedge v^m_1) \otimes ((\gamma w^1_2)^*\wedge \dots \wedge (\gamma w^s_2)^*) \\
\mapsto & ((\alpha v^1_1)\wedge \dots \wedge (\alpha v^m_1) \wedge w^{m+1}_1\wedge \dots w^n_1) \\
& \otimes ((\beta w^n_1)^*\wedge \dots \wedge (\beta w^{m+1}_1)^*\wedge (w^1_2)^*\wedge \dots \wedge (w^s_2)^*).
\end{align*}
About the choice of these vectors and the well-definedness of (\ref{isom-exact}), see \cite[Subsection 3.2.1]{EES-ori}.
%where $u_1\dots ,u_k$ is a basis of $V_1$, $\alpha(u_1)\dots ,\alpha(u_k), v_1,\dots ,v_l$ is a basis of $W_1$, and $\beta(v_1),\dots ,\beta(v_l), w_1,\dots ,w_m$ is a basis of $W_2$.
\end{itemize}
For an ordered set of vector spaces $\{V_1,\dots ,V_l\}$, we use a notation by a column
\[ \begin{bmatrix} V_1 \\ \vdots \\ V_l \end{bmatrix} \coloneqq V_1\oplus \cdots \oplus V_l . \]
\begin{rem}\label{rem-cancel}
Compare (\ref{original-exact-seq}) with an exact sequence
\begin{align*}
\xymatrix{ 0\ar[r] &  {\begin{bmatrix} U \\  V_1 \end{bmatrix}} \ar[r]^-{\alpha'} & {\begin{bmatrix} U \\  W_1 \end{bmatrix}} \ar[r]^-{  \beta'} & W_2 \ar[r]^-{\gamma} & V_2 \ar[r] &0 }
\end{align*}
such that $\alpha'(0,v)=(0,\alpha(v))$, $\beta'(0,w)=\beta(w)$, and the restriction of $\alpha'$ to the $U$-components is $\id_U$.
The isomorphism induced by this exact sequence
is the same as the tensor product of (\ref{isom-exact}) and the identity map on $\topwedge U$. Thus, when $\topwedge V_1\otimes \topwedge V_2^*$ and $\topwedge W_1\otimes \topwedge W_2^*$ are oriented, the sign of  (\ref{isom-exact}) is the same as the sign of the isomorphism induced by the above exact sequence.
\end{rem}

In the latter part of this appendix, we frequently use exact sequences defined by linear gluing arguments. We refer to \cite[Subsection 3.3.2]{EES-ori}. Note that the orders of vector spaces in the columns are different from the exact sequences used in \cite[Theorem 4.2]{K}.

Lastly, for every $n\in \Z_{\geq 0}$, we fix an orientation of $\R^n$ by $dx_1\wedge \dots \wedge dx_n$, where $(x_1,\dots ,x_n)$ is the coordinate of $\R^n$. Let  $\mathcal{L}(U(n))$ be the free loop space of $U(n)$.
Then, by \cite[Definition 3.10]{EES-ori}, we define a canonical orientation on the determinant line bundle over $\mathcal{L}(U(n))$. More precisely, the fiber of $A\in \mathcal{L}(U(n))$ is the determinant line of 
a $\Dvar$-operator on the disk $D$ (with no puncture) whose Lagrangian boundary condition is given by $A(z)(\R^n)$ for every $z\in \partial D$. In other words, $A\colon \partial D\to U(n)$ gives a trivialized boundary condition for the $\Dbar$-operator on $D$.
For the terminology of a \textit{trivialized boundary condition} for a $\Dbar$-operator, see \cite[Subsection 3.1]{EES-ori}.

\subsection{Capping operators}\label{subsec-cap}

Let $\Lambda$ be a compact Legendrian submanifold of $(P\times \R, dz -\lambda)$. As in Subsection \ref{subsec-setup}, it is a connected spin manifold and the Maslov class of $\pi_P(\Lambda)$ vanishes. Moreover $\pi_P(\Lambda)\subset P$ consists only of transverse double points. We fix an orientation of $\Lambda$ and its spin structure $\mathfrak{s}$.

In addition, let $J$ be a almost complex structure on $P$ compatible with $d\lambda$.
We assume that $J$ is adapted to $\Lambda$ and that $\Lambda$ is admissible in the sense of the definitions in \cite[Subsection 2.3]{EES}. Then, for each Reeb chord $c \in\mathcal{R}(\Lambda)$, there exists a neighborhood $B_c$ of $\pi_P(c(0))$ on which $J$ is integrable. We take a unitary trivialization $\Psi_c\colon \rest{TP}{B_c}\to \underline{\C}^n$. We also fix a positively oriented framing $X_c$ of $(\gamma_c)^*T\Lambda$.

For every $(c\colon [0,T]\to P \times \R)\in \mathcal{R}(\Lambda)$,
identifying $B_c$ with an open subset of $\C^n$,
we define a pair of \textit{capping operators} in the same way as in \cite[Subsection 3.3]{EES-ori}. Following the notations of \cite[Subsection 4.5.4]{K}, we denote them by $(\Dbar_{l, c,+}, \Dbar_{l, c,-})$.
These are $\Dbar$-operators on $D_1$ with trivialized boundary conditions $R_{c,\pm}\colon \partial D_1\to U(n)\times U(2)\subset U(n+2)$ which are stabilized, and $R_{c,\pm}$ is determined by the pair $((\pi_P)_*(T_{c(0)}\Lambda) ,(\pi_P)_*(T_{c(T)}\Lambda) )$ of Lagrangian subspaces of $T_{\pi_P(c(0))} P \cong \C^n$ and the choice of the frames of $T_{c(0)}\Lambda$ and $T_{c(T)}\Lambda$ fixed by $X_c$.  The correspondence with the notations of \cite[Subsection 3.3.4]{EES-ori} is as follows: $R_{c,+}= \begin{cases} R_{p,o} &  \text{ if }|c| \text{ is odd},\\ R_{p,e}& \text{ if }|c| \text{ is even,}\end{cases} $  $R_{c,-}= \begin{cases} R_{n,o} &  \text{ if }|c| \text{ is odd},\\ R_{n,e}& \text{ if }|c| \text{ is even.}\end{cases} $
From the computations of \cite[Subsection 3.3.6]{EES-ori},
\begin{align*}\begin{array}{ll}
\dim \ker \Dbar_{l,c,+} \equiv 0,  & \dim \coker \Dbar_{l,c,+} \equiv |c|+n-1, \\
\dim \ker \Dbar_{l,c,-} \equiv 1, & \dim \coker \Dbar_{l,c,-} \equiv |c|,
\end{array}
\end{align*}
modulo $2$. By \cite[Lemma 3.4]{EES-ori}, there exists an exact sequence by a linear gluing argument
\begin{align}\label{ori-cap-l}
\xymatrix{
0\ar[r] &\ker \Dbar_{l,c} \ar[r] & {\begin{bmatrix} \ker \Dbar_{l,c,+} \\  \ker \Dbar_{l,c,-} \end{bmatrix}} \ar[r] 
& {\begin{bmatrix} \coker \Dbar_{l,c,+} \\ \coker \Dbar_{l,c,-} \end{bmatrix}} \ar[r] & \coker \Dbar_{l,c} \ar[r] &0.
}
\end{align}
Here, $\Dbar_{l,c}$ is a $\Dbar$-operator on the disk $D$ with a trivialized boundary condition obtained by gluing $R_{c,+}$ and $R_{c,-}$.

We also define $\Dbar_{s,c,+}$ and $\Dbar_{s,c,-}$ as in \cite[Subsection 4.5.4]{K}, which we call the \textit{capping operators} as well. They are $\Dbar$-operators on $D_1$ with the trivialized boundary conditions
\begin{align}\label{stab-R} \tilde{R}_{c,\pm} \colon \partial D_1 \to U(1)\times U(n+2)\subset U(n+3) \colon z\mapsto \begin{pmatrix} 1 & 0 \\ 0 & R_{c,\pm}(z)
\end{pmatrix},
\end{align}
and defined on Sobolev spaces with exponential weight $(\epsilon, \dots , \epsilon)\in \R^{n+3}$ for a small $\epsilon>0$.
By \cite[Proposition 4.11]{K}, there are isomorphisms
\begin{align}\label{isom-ker-coker}
\begin{array}{cc}
\ker \Dbar_{s,c,\pm} \to \ker \Dbar_{l,c,\pm},& \coker \Dbar_{s,c,\pm} \to \coker \Dbar_{l, c,\pm} 
\end{array}
\end{align}
defined by composing with the projection $\C\oplus \C^{n+2}\to \C^{n+2}$.
Thus, we have natural isomorphisms
\begin{align}\label{isom-proj}
\begin{array}{cc}
\pi_+ \colon \det \Dbar_{s,c,+}\to \det \Dbar_{l,c,+},&  \pi_- \colon \det \Dbar_{s,c,-}\to \det \Dbar_{l,c,-}.
\end{array}
\end{align}
In addition,
by gluing the boundary conditions $\tilde{R}_{c,+}$ and $\tilde{R}_{c,-}$ of (\ref{stab-R}), we obtain a $\Dbar$-operator $\Dbar_{s,c} $ on the disk $D$. Then, there exists an exact sequence by a linear gluing argument
\begin{align}\label{ori-cap-s}
\xymatrix{
0\ar[r] &\ker \Dbar_{s,c} \ar[r] & {\begin{bmatrix} \ker \Dbar_{s,c,+} \\ \R \\ \ker \Dbar_{s,c,-} \end{bmatrix}} \ar[r] 
& {\begin{bmatrix} \coker \Dbar_{s,c,+} \\ \coker \Dbar_{s,c,-} \end{bmatrix}} \ar[r] & \coker \Dbar_{s,c} \ar[r] &0.
}
\end{align}
%The vector space $\R$ in the middle of the second column is needed since the weighted Sobolev spaces for both $\Dbar_{c,\pm}$ have the positive weight in the $\R_a$-component.

Let us define the orientations of the determinant lines of capping operators by the following steps:
We first fix an orientation of $\det \Dbar_{s,c,-}$ for every $c\in \mathcal{R}(\Lambda)$. 
Then, the isomorphism (\ref{isom-exact}) induced by (\ref{ori-cap-s}) and the canonical orientation of $\det \Dbar_{s,c}$ determine the orientation of $\det \Dbar_{s,c,+}$.
We assign an orientation to $\det \Dbar_{l,c,-}$ so that the isomorphism $\pi_-$ \textit{reverses} orientations. Finally, the isomorphism (\ref{isom-exact}) induced by (\ref{ori-cap-l}) and the canonical orientation of $\det \Dbar_{l,c}$ determine the orientation of $\det \Dbar_{l,c,+}$.

\begin{lem}\label{lem-cap}
The isomorphism $\pi_+$ reverses the orientations.
\end{lem}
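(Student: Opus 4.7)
The plan is to unwind the two orientation procedures in parallel by exploiting natural direct-sum decompositions of the capping operators and then to track Koszul signs carefully.

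First, I would observe that the trivialized boundary conditions $\id \oplus R_{c,\pm}$ of $\Dbar_{s,c,\pm}$ exhibit a direct-sum decomposition $\Dbar_{s,c,\pm} = \Dbar^{w}_{\id} \oplus \Dbar_{l,c,\pm}$, where $\Dbar^{w}_{\id}$ is the $\Dbar$-operator on the trivial $\C$-bundle over $D_1$ with constant $\R$-boundary and exponential weight $\epsilon>0$ at the puncture. This weighted operator has trivial kernel and cokernel, and the projections $\pi_{\pm}$ of (\ref{isom-proj}) are exactly the identifications induced by this decomposition. Similarly, the glued operator $\Dbar_{s,c}$ on $D$ splits as $\Dbar_{\id} \oplus \Dbar_{l,c}$, where the unweighted $\Dbar_{\id}$ on the trivial $\C$-bundle over $D$ has $\ker \cong \R$ (real constants) and trivial cokernel, and comes with a canonical positive orientation.

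Next, I would invoke the compatibility of the canonical orientation on $\det \Dbar_{s,c}$ coming from the free-loop-space convention (\cite[Definition 3.10]{EES-ori}) with direct-sum decompositions of loops: up to an explicit Koszul sign from rearranging the tensor factors, this canonical orientation equals the product of the canonical orientations on $\det \Dbar_{l,c}$ and $\det \Dbar_{\id}$. Feeding these identifications into (\ref{ori-cap-s}), I would recognize the sequence as the sequence (\ref{ori-cap-l}) augmented by an extra $\R$-summand placed at the bottom of $V_1 = \ker \Dbar_{l,c} \oplus \R$ and in the middle of $W_1 = \ker \Dbar_{l,c,+} \oplus \R \oplus \ker \Dbar_{l,c,-}$, with the $\R \to \R$ map being the identity. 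To apply Remark \ref{rem-cancel}, I would move the $\R$-summand to the first position in both $V_1$ and $W_1$, picking up Koszul signs $(-1)^{\dim \ker \Dbar_{l,c}}$ and $(-1)^{\dim \ker \Dbar_{l,c,+}}$, respectively.

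After applying Remark \ref{rem-cancel}, the rearranged isomorphism is identified with the isomorphism from (\ref{ori-cap-l}) tensored with $\id_{\R}$. Combining all contributions (the sign from splitting $\det \Dbar_{s,c}$ as a direct sum, the rearrangement signs just mentioned, and the Koszul signs from reorganising the RHS of (\ref{isom-exact}) into the product form $\det \Dbar_{l,c,+} \otimes \R \otimes \det \Dbar_{l,c,-}$), the net discrepancy between the two constructions reduces to an expression in the mod-$2$ dimensions of the kernels and cokernels of $\Dbar_{l,c,\pm}$. Plugging in the parities $\dim \ker \Dbar_{l,c,+} \equiv 0$ and $\dim \ker \Dbar_{l,c,-} \equiv 1$ from Subsection \ref{subsec-cap} (together with the corresponding cokernel parities), the net sign collapses to $-1$. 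This is precisely the sign compensated by the convention of equipping $\det \Dbar_{l,c,-}$ with the orientation \emph{opposite} to the one induced by $\pi_{-}$, so the orientation on $\det \Dbar_{l,c,+}$ prescribed by (\ref{ori-cap-l}) coincides with the $\pi_{+}$-pushforward of the orientation on $\det \Dbar_{s,c,+}$, proving the lemma.

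The main obstacle is the bookkeeping of Koszul signs: one has to juggle the sign from re-ordering the ker/coker pairs when splitting $\det \Dbar_{s,c} \cong \det \Dbar_{l,c} \otimes \det \Dbar_{\id}$, the different positions of the auxiliary $\R$-summand in $V_1$ and $W_1$, and the parity data of the capping operators, and verify that all of these conspire to produce exactly the sign $-1$ needed to match the opposite-orientation convention on $\det \Dbar_{l,c,-}$.
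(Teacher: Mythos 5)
Your argument takes essentially the same route as the paper: strip off the trivial $\C$-factor from $\Dbar_{s,c,\pm}$ and the glued $\Dbar_{s,c}$, recognize (\ref{ori-cap-s}) as the sequence (\ref{ori-cap-l}) augmented by extra $\R$-summands, move $\R$ into the position required by Remark \ref{rem-cancel}, and observe that the remaining Koszul sign is exactly cancelled by the opposite-orientation convention on $\det\Dbar_{l,c,-}$. The only substantive difference is in the bookkeeping conventions: the paper places $\R$ at the top of $\ker\Dbar_{s,c}\cong\R\oplus\ker\Dbar_{l,c}$, so only one move (in the second column) is needed, whereas you place it at the bottom and therefore move it in both columns, introducing an extra factor $(-1)^{\dim\ker\Dbar_{l,c}}$ that you must then cancel against the Koszul sign from re-splitting $\det\Dbar_{s,c}$. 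Be aware, though, that the paper's own proof cites $\dim\ker\Dbar_{l,c,+}\equiv 1 \pmod 2$ when moving $\R$, which conflicts with the parity $\dim\ker\Dbar_{l,c,+}\equiv 0$ listed at the start of Subsection \ref{subsec-cap} (and used consistently in the proof in Subsection \ref{subsec-compare}); your outline uses the listed parity $\equiv 0$. This internal discrepancy in the paper makes it impossible to certify, from a sketch at this level of detail, that your claimed net sign of $-1$ comes out correctly without actually writing down every Koszul factor; you would need to carry out the full expansion of (\ref{isom-exact}) applied to both sequences and verify the coker contributions as well as the kernel ones, which you only gesture at. Still, the strategy you propose is the paper's strategy, and the conclusion matches.
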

\begin{proof}While the conventions are different,
the proof goes in a similar way as in \cite[Lemma 4.13]{K}. There exist natural isomorphisms
$\ker \Dbar_{s, c}\to \R\oplus \ker \Dbar_{l,c}$ and $\coker \Dbar_{s,c}\to \coker \Dbar_{l,c}$, where $\R$ corresponds to the subspace in $\ker \Dbar_{s,c}$ spanned by the constant function to $(1,0)\in \C\oplus \C^{n+2}$.
They define an isomorphism $\det \Dbar_{s,c} \to \R\otimes \det \Dbar_{l,c}$ and this preserves the canonical orientations.
Together with the isomorphisms (\ref{isom-ker-coker}), we obtain from (\ref{ori-cap-s}) an exact sequence
\[
\xymatrix{
0\ar[r] & {\begin{bmatrix}\R \\ \ker \Dbar_{l,c} \end{bmatrix}} \ar[r] & {\begin{bmatrix} \ker \Dbar_{l,c,+} \\ \R \\  \ker \Dbar_{l,c,-} \end{bmatrix}} \ar[r] 
& {\begin{bmatrix} \coker \Dbar_{l,c,+} \\ \coker \Dbar_{l,c,-} \end{bmatrix}} \ar[r] & \coker \Dbar_{l,c} \ar[r] &0
}
\]
Since $\dim \ker \Dbar_{l,c,+} \equiv 0$ modulo $2$, it does not cost a sign to change the order in the second column by moving $\R$ to the top. In addition, the restriction of the second map to the $\R$-components is $\R\to \R \colon v\mapsto v$. As Remark \ref{rem-cancel}, the exact sequence after changing the order induces an isomorphism which has the same sign as the one induced by (\ref{ori-cap-l}), which preserves the orientations by definition. 
Since $\pi_-$ reverses orientations, $\pi_+$ also reverses orientations.
\end{proof}

\subsection{Orientation of $\mathcal{M}_{L,B}(a; b_1,\dots ,b_m)$}\label{subsec-ori-Lag}

Let $L$ be an exact Lagrangian cobordism from $\Lambda_-$ to $\Lambda_+$ considered in Subsection \ref{subsec-DGA-map}.
Let $\mathfrak{s}_0$ be a spin structure on $L$.
%We fix $B\in H_1(L)$, $a\in \mathcal{R}(\Lambda_+)$ and $b_1,\dots ,b_m\in \mathcal{R}(\Lambda_-)$. 
For short, we use the notation $\hat{\mathcal{M}}$ for the space (\ref{moduli-hat})
and $\mathcal{M}$ for the moduli space (\ref{moduli-cobordism}).

For every $(u,\kappa) \in \hat{\mathcal{M}}$, let $\bar{u}\colon D_{m+1}\to P$ be the $P$-component of $u$. We take a unitary trivialization of $\bar{u}^* TP$ which coincides with the pullback of $\Psi_a$ (resp. $\Psi_{b_k}$) near $p_0$ (resp. $p_k$). This defines a unitary trivialization $\Psi_u \colon u^* T(\R\times (P\times \R))  \to \C\oplus \C^n$ so that $\partial_a$ is mapped to $(1,0)$. Note that from the definition (\ref{alm-cpx-str}) of $\tilde{J}$, $\partial_z$ is mapped to $(-\sqrt{-1},0)$.
%This is a bundle over $\partial D_{m+1}$ of Lagrangian subspaces of $\C\oplus \C^{n}$.

%We take $\eta$ so that $A$ is asymptotic to $\id \oplus \bar{A}$ for $\bar{A}\in U(n)$ on each stripe end of $D_{m+1}$.

Except the differences of the conventions, the process to define the orientation of $\hat{\mathcal{M}}$ is the same as in \cite[Subsection 4.6]{K}.
$\hat{\mathcal{M}}$ is the zero set of a section $\Gamma$ of a Banach bundle over a Banach manifold.
The linearization of $\Gamma$ at $(u,\kappa)\in \hat{\mathcal{M}}$ has the form $(d\Gamma)_{(u,\kappa)} = d_u\Gamma  \oplus d_{\kappa}\Gamma $, where
\begin{align*}
 d_u\Gamma & \colon H_{2,\boldsymbol{\epsilon}} [\lambda_u] (D_{m+1}; \C^{n+1}) \oplus \R^{m+1} \to H_{1,\boldsymbol{\epsilon}} [0] (D_{m+1}; T^{*0,1}D_{m+1} \otimes \C^{n+1}) , \\
d_{\kappa}\Gamma & \colon T_{\kappa}\mathcal{C}_{m+1} \to H_{1,\boldsymbol{\epsilon}} [0] (D_{m+1}; T^{*0,1}D_{m+1} \otimes \C^{n+1}),
\end{align*}
and  $d_u\Gamma$ is a linear $\Dbar$-operator with the boundary condition
\[ \lambda_u \coloneqq \Psi_u ( (\rest{u}{\partial D_{m+1}})^*TL),\]
which is a bundle of Lagrangian subspaces of $\C^{n+1}$ over $\partial D_{m+1}$.
Here, for each $i\in \{0,\dots ,m\}$, the vector $(0,\dots ,0,1,0,\dots ,0) \in \R^{m+1}$, whose $(i+1)$-th component is $1$, corresponds to a $C^{\infty}$ function $f_i\colon \colon D_{m+1} \to \C^{n+1}$ which is supported in a neighborhood of $p_i$ and constant to $(1,0) \in \C\oplus \C^n$ near $p_i$.
For the definition of the weighted Sobolev spaces, see \cite[Subsection 3.1.1]{EES-ori}.
Here, we take $\boldsymbol{\epsilon}=(\epsilon, \dots ,\epsilon)\in  \R^{n+1}$ for $\epsilon>0$ and this defines the weight of Sobolev spaces. We choose $\epsilon>0$ to be sufficiently small so that $d_u\Gamma$ is a Fredholm operator.
Any element $(f, (a_0,\dots ,a_m))\in H_{2,\boldsymbol{\epsilon}}[\lambda_u](D_{m+1};\C^{n+1}) \oplus \R^{m+1}$ is identified with a function $f+\sum_{i=0}^{m}a_if_i$ on $D_{m+1}$.

%and \cite[Subsection 3.4]{K}
We take $\alpha_{\pm}\colon \Lambda_{\pm}\to \R$ satisfying the conditions as in \cite[Subsection 3.4.2]{EES-ori} near the end points of every Reeb chords. Let $\alpha'\colon L\to \R$ be a smooth function such that $\alpha'(a,p)=\alpha_+(p)$ if $a\geq a_+$ and $\alpha'(a,p)=\alpha_-(p)$ if $a\leq a_-$.
We then stabilize $\lambda_u$ to
\[\tilde{\lambda}_u\coloneqq \lambda_u \oplus (\rest{u}{\partial D_{m+1}})^*l,\]
where $l_q$ for every $q\in L$ is  a Lagrangian subspace of $\C^2=\C e_1\oplus \C e_2$ with a framing $(f_1,f_2)$ defined by
\[ f_1\coloneqq  e^{\sqrt{-1}\alpha'(q) /2} e_1,\ f_2\coloneqq e^{\sqrt{-1}\alpha'(q)}e_2 . \]

By closing up $r_{\bar{L}}\circ \rest{u}{\partial D_{m+1}}$ by the capping paths, we obtain a loop $\gamma_u \colon S^1\to \bar{L}$. Along any capping path, $(\gamma_u)^* (TL \oplus l)$ has a trivialization determined by the $X_c$ and $(f_1,f_2)$.
We extend it to an isometric trivialization on $S^1$ which lifts to a trivialization along $\gamma_u$ of the principal spin bundle for $\mathfrak{s}_0$.
We restrict this trivialization to the domain of $r_{\bar{L}}\circ \rest{u}{\partial D_{m+1}}$ and pull it back by $r_{\bar{L}}$, then we obtain a trivialization $\eta_u \colon  (\rest{u}{\partial D_{m+1}})^*(TL\oplus l) \to \R^{n+3}$ preserving the fiber metrics.
Its $\C$-linear extension gives
\[\eta_u^{\C}\colon (\rest{u}{\partial D_{m+1}})^*(T(\R\times (P\times \R))\oplus \C^2 \to \C^{n+3}.\]
%
%We take a triangulation of $\Lambda_+$ and $\Lambda_-$ as in \cite[Subsection 3.4.2]{EES-ori}.They are extended to a triangulation of $\bar{L}$ which has $\Lambda_+\sqcup\Lambda_-$ as the boundary. A spin structure on $L$ determines a trivialization of $TL$ on the $1$-skeleton $\Delta^{(1)}$ up to homotopy. and pull it back to $(r_{\bar{L}})^{-1}(\Delta^{(1)})$. Pick a homotopy from $\rest{u}{\partial D_{m+1}}$ to a map $\partial D_{m+1} \to (r_{\bar{L}})^{-1}(\Delta^{(1)})$ with end points at infinity fixed. Then, a trivialization $\eta_u \colon \tilde{\lambda}_u \to \R^{n+3}$ is induced. 
%
This defines $A_u \colon \partial D_{m+1} \to U(n+3)$ by
\[A_u (z)\coloneqq ((\Psi_u)_z \oplus \id_{\C^2})\circ (\eta^{\C}_u)_z^{-1}\]
for every $z\in \partial D_{m+1}$.
%We may take $\eta_u$ so that $A_u \circ \psi_0(s,t)$ (resp. $A_u\circ \psi_k(s,t)$) for $t\in \{0,1\}$ are constant in $s$ if $s\gg 0$ (resp. $s\ll 0$).
Now we define
\[\Dbar_u\colon H_{2,\boldsymbol{\epsilon}} [\tilde{\lambda}_u] (D_{m+1}; \C^{n+3}) \oplus \R^{m+1} \to H_{1,\boldsymbol{\epsilon}} [0] (D_{m+1}; T^{*0,1}D_{m+1} \otimes \C^{n+3}) \]
to be the $\Dbar$-operator with a trivialized boundary condition $A_u$. 
Here the exponential weight is $\boldsymbol{\epsilon}=(\epsilon, \dots , \epsilon)$ for a small $\epsilon>0$. In addition, $\ker d_u\Gamma \cong \ker \Dbar_u$ and $\coker d_u\Gamma \cong \coker \Dbar_u$ canonically (i.e. the kernels (resp. the cokernels) of $d_u\Gamma$ and $\Dbar_u$ after the stabilization are canonically isomorphic) since the $\Dbar$-operator on $D_{m+1}$ with a boundary condition $(\rest{u}{\partial D_{m+1}})^*l$ has a trivial kernel and a trivial cokernel as noted in \cite[Subsection 3.3.5]{EES-ori}.

%\begin{rem}In \cite{EES-ori}, the stabilization by taking a direct sum with $\underline{\R}^2$ is necessary to deal with a family of Legendrian embeddings. Though we only consider a single Lagrangian or Legendrian submanifold here, we take a stabilization to adopt to the arguments of \cite{EES-ori, K}.\end{rem}

If $\Gamma$ is transverse to the zero section, then $(d\Gamma)_{(u,\kappa)}$ is surjective for every $(u,\kappa)\in \hat{\mathcal{M}}$, and thus we get an exact sequence
\begin{align}\label{seq-Gamma}
\xymatrix{
0 \ar[r] & \ker \Dbar_u \ar[r] & \ker (d\Gamma)_{(u,\kappa)} \ar[r] & T_{\kappa}\mathcal{C}_{m+1} \ar[r] & \coker \Dbar_u \ar[r] &0.
}
\end{align}
This induces an isomorphism (\ref{isom-exact}) to be
\begin{align}\label{isom-moduli}
\begin{split}
\det \Dbar_u \to & \  \topwedge\ker (d\Gamma)_{(u,\kappa)} \otimes \topwedge (T_{\kappa}\mathcal{C}_{m+1})^* \\
&= \topwedge T_{(u,\kappa)} \hat{\mathcal{M}}  \otimes \topwedge (T_{\kappa}\mathcal{C}_{m+1})^* .
\end{split}
\end{align}

By a linear gluing argument, we obtain an exact sequence
\begin{align}\label{ori-u}\xymatrix{
0 \ar[r] & \ker \Dbar_{\hat{u}} \ar[r] &
{\begin{bmatrix}\ker \Dbar_{s,b_1,-} \\ \vdots \\  \ker \Dbar_{s,b_m,-} \\ \ker \Dbar_{s,a,+} \\ \ker \Dbar_u \end{bmatrix}} \ar[r]
&{\begin{bmatrix}\coker \Dbar_{s,b_1,-} \\ \vdots \\  \coker \Dbar_{s,b_m,-} \\ \coker \Dbar_{s,a,+} \\ \coker \Dbar_u \end{bmatrix}} \ar[r]& \coker  \Dbar_{\hat{u}}\ar[r] &0.
}\end{align}
Here, $\Dbar_{\hat{u}}$ is a $\Dbar$-operator on the disk $D$ whose trivialized boundary condition $A_{\hat{u}}$ is given by gluing $\tilde{R}_{b_k,-}$ ($k=1,\dots ,m$), $\tilde{R}_{a,+}$ of (\ref{stab-R}) and $A_{u}$. (If we reverse the order of the vector spaces in the middle two columns, we obtain the same sequence as in \cite[(19)]{K}.)
By the isomorphism (\ref{isom-exact}) induced by (\ref{ori-u}), the canonical orientation of $\det \Dbar_{\hat{u}}$ and the orientations of capping operators determine an orientation of $\det \Dbar_u$.

We use the orientation of $T_{\kappa}\mathcal{C}_m$ defined in \cite[Subsection 3.4.1]{EES-ori}.
Then, the orientation of $T_{(u,\kappa)}\hat{\mathcal{M}}$ is defined so that the isomorphism (\ref{isom-moduli}) preserves the orientations.

When $m\geq 2$, $\hat{\mathcal{M}}=\mathcal{M}$. When $m=0,1$, we define the orientation of $\mathcal{M}=\hat{\mathcal{M}}/\mathcal{A}_{m+1}$ as follows: For any $(u,\kappa)\in \hat{\mathcal{M}}$, we take a local slice at $(u,\kappa)$ of the action of $\mathcal{A}_{m+1}$ and identify it with a neighborhood of $[(u,\kappa)]$ in $\mathcal{M}$. Then, we have an isomorphism 
\[ T_{[(u,\kappa)]} \mathcal{M} \times T_1 \mathcal{A}_{m+1} \to T_{(u,\kappa)} \hat{\mathcal{M}}\]
We use the orientation of $T_1\mathcal{A}_{m+1}$ defined in \cite[Subsection 3.4.1]{EES-ori}.
The orientation of $T_{[(u,\kappa)]} \mathcal{M}$ is defined so that this isomorphism preserves the orientations.

%\begin{rem}\label{rem-opposite-ori}
%Fix $g\in O(n)\setminus SO(n)$. Let $\tilde{g}\coloneqq \id_{\R}\oplus g \oplus \id_{\R^2}\in O(n+3)\setminus SO(n+3)$. The trivialized boundary condition $A_{\hat{u}}\colon \partial D \to U(n+3)$ for $\Dbar_{\hat{u}}$ depends on a spin structure $\mathfrak{s}_0$. For $-L$ with the opposite orientation, we change the framing $X_c$ for every $c\in \mathcal{R}(\Lambda_{\pm})$ by $g$. Then, for the spin structure $\bar{\mathfrak{s}}_0$ on $-L$ defined in Remark \ref{rem-spin}, we may choose $ \tilde{g} (\id_{\R}\oplus R_{c,\pm})$ and $\tilde{g} A_{u}$ as trivialized boundary conditions for $\Dbar_{s,c,\pm}$ and $\Dbar_u$ respectively, and the trivialized boundary condition for $\Dbar_{\hat{u}}$ becomes $\tilde{g}A_{\hat{u}}$. This is homotopically the same as $A_{\hat{u}}$, so the orientation of $T_{u}\hat{\mathcal{M}}$ for $\bar{\mathfrak{s}}_0$ is the same as the one for $\mathfrak{s}_0$.
%\end{rem}

\subsection{Moduli space $\mathcal{M}_{\Lambda,A}^P(a;b_1,\dots ,b_m)$}\label{subsec-moduli-P}

Referring to \cite[Subsection 2.3]{EES-R}, let us introduce the moduli space
\[\mathcal{M}^P_{\Lambda,A}(a; b_1,\dots ,b_m)\]
of $J$-holomorphic curves in $P$ up to conformal equivalence.
(See also \cite[Subsection 2.3]{EES} and \cite[Subsection 2.3.4]{EENS}.)
This consists of triples $(v,f,\kappa)$ such that
$\kappa\in \mathcal{C}_{m+1}$ and $(v,f)$ is a pair of a pseudo-holomorphic map $v \colon D_{m+1}\to P$ with respect to $J$ and $j_{\kappa}$, and a smooth function $f \colon \partial D_{m+1}\to \R$ satisfying:
\begin{itemize} 
\item $(v(z), f(z))\in \Lambda$ for all $z\in \partial D_{m+1}$. In particular, $v(\partial D_{m+1})\subset \pi_P(\Lambda)$.
\item 
$\lim_{s \to \infty}v \circ \psi_0(s,t) = \pi_P(a(0))$ and $\lim_{s\to -\infty} v\circ \psi_k(s,t) = \pi_P(b_k(0))$ for $k\in \{1,\dots ,m\}$ $C^{\infty}$-uniformly on $t \in [0,1]$. Moreover,
for $t\in \{0,1\}$, $\lim_{s\to \infty }(v,f)\circ \psi_0(s,t) = a(T(1-t))$ and $\lim_{s\to -\infty} (v,f)\circ \psi_k(s,t)=b_k(T_k(1-t))$.
\item We extend $(\rest{v}{\partial D_{m+1}},f)$ continuously on $\overline{\partial D_{m+1}}$. Let $\partial (v,f)$ denote this $1$-chain in $\Lambda$. Then
\[ [\partial (v,f) +(\gamma_{b_1} + \dots +\gamma_{b_m})-\gamma_a] = A \in H_1(\Lambda) . \]
\end{itemize}
As (\ref{moduli-cobordism}), when $m=0,1$, we need to take the quotient by the action of $\mathcal{A}_{m+1}$.

From the definition of $\tilde{J}$, there exists a map
\[\Pi_P\colon \bar{\mathcal{M}}_{\Lambda, A}(a; b_1,\dots ,b_m) \to \mathcal{M}^P_{\Lambda, A}(a; b_1,\dots ,b_m) , \]
which maps $[(u,\kappa)]$ to $(v,\rest{f}{\partial D_{m+1}},\kappa)$ determined by $u(z)=(a(z),(v(z),f(z)))\in \R\times (P\times \R)$ for all $z\in D_{m+1}$.

By \cite[Lemma 4.5]{EES}, $\mathcal{M}^P_{\Lambda,A}(a;b_1,\dots ,b_m)$ is transversely cut out for generic $J$. By \cite[Proposition 2.3]{EES}, its dimension as a smooth manifold is
\[|a|-(|b_1|+\dots +|b_m|)-1,\]
and the union  $\coprod_{A\in H_1(\Lambda)} \mathcal{M}^P_{\Lambda, A}(a;b_1,\dots ,b_m)$ is a compact $0$-dimensional manifold if $|a|-1=|b_1|+\dots +|b_m|$.
Moreover, for $\tilde{J}$ of (\ref{alm-cpx-str}) associated to such $J$,
$\bar{\mathcal{M}}_{\Lambda,A}(a;b_1,\dots ,b_m)$ is also cut out transversely and $\Pi_P$ is a diffeomorphism. See \cite[Theorem 2.1, Lemma 8.2]{DR}.

\subsection{Orientation of $\mathcal{M}^P_{\Lambda,A}(a;b_1,\dots ,b_m)$}\label{subsec-compare}

We consider the case where $\Lambda_+=\Lambda_-=\Lambda$ and $L=\R\times \Lambda$. 
We fix a spin structure $\mathfrak{s}$ on $\Lambda$.
Let us use the notation $\bar{\mathcal{M}}$ for $\bar{\mathcal{M}}_{\Lambda,A}(a;b_1,\dots ,b_m)$ and $\mathcal{M}^P$ for $\mathcal{M}^P_{\Lambda,A}(a;b_1,\dots ,b_m)$.
Referring to \cite{EES-ori}, we review how to define the orientation of $\mathcal{M}^P$. We omit the case of $m=0,1$ (see \cite[Subsection 3.4.3 (a)]{EES-ori}).

Similar to $d_u\Gamma$ in the previous section, we have a linearized $\Dbar$-operator at $(v,f,\kappa)\in \mathcal{M}^P$. Its boundary condition is stabilized to define a $\Dbar$-operator $\Dbar_v$. See \cite[Subsection 3.4.2]{EES-ori}.
By a linear gluing argument, there exists an exact sequence
\begin{align}\label{ori-pi-u}\xymatrix{
0 \ar[r] & \ker \Dbar_{\hat{v}} \ar[r] &
{\begin{bmatrix}\ker \Dbar_{l,b_1,-} \\ \vdots \\  \ker \Dbar_{l,b_m,-} \\ \ker \Dbar_{l,a,+} \\ \ker \Dbar_{v} \end{bmatrix}} \ar[r]
&{\begin{bmatrix}\coker \Dbar_{l,b_1,-} \\ \vdots \\  \coker \Dbar_{l,b_m,-} \\ \coker \Dbar_{l,a,+} \\ \coker \Dbar_{v} \end{bmatrix}} \ar[r]& \coker  \Dbar_{\hat{v}}\ar[r] &0.
}\end{align}
Here, $\Dbar_{\hat{v}}$ is an $\Dbar$-operator on the disk $D$ obtained by gluing the boundary conditions for $\Dbar_v$, $\Dbar_{l,b_k,-}$ ($k=1,\dots ,m$) and $\Dbar_{l,a,+}$.
By the isomorphism (\ref{isom-exact}) induced by (\ref{ori-pi-u}),
the canonical orientation of $\det \Dbar_{\hat{v}}$ and
the orientations of the capping operators determine an orientation of $\det \Dbar_v$.

Similar to (\ref{isom-moduli}), we have an isomorphism
\begin{align}\label{isom-moduli-P}
\det \Dbar_v \to \topwedge T_{(v,f,\kappa)} \mathcal{M}^P \otimes \topwedge (T_{\kappa}\mathcal{C}_{m+1})^* .
\end{align}
The orientation of $T_{(v,f,\kappa)} \mathcal{M}^P$ is defined so that this isomorphism preserves orientations.

%As is discussed in \cite[Subsection 4.4]{EES}, if we fix an orientation of the determinant line of the capping operator for each Reeb chord, we can specify an orientation of $\coprod_{A\in H_1(\Lambda)} \mathcal{M}^P_{\Lambda, A}(a; b_1,\dots ,b_m)$ for every $a,b_1,\dots ,b_m\in \mathcal{R}(\Lambda)$.

\begin{defi}\label{def-ori-barM}
We define the orientation of $\bar{\mathcal{M}}_{\Lambda, A}(a; b_1,\dots ,b_m)$ so that the diffeomorphism $\Pi_P\colon \bar{\mathcal{M}} \to \mathcal{M}^P$  preserves the orientations.
\end{defi}
Then, Proposition \ref{prop-DGA} which states that $\partial_{\mathfrak{s}}\circ \partial_{\mathfrak{s}}=0$ follows directly from \cite[Theorem 4.1]{EES-ori}.

On the other hand, by taking a slice at $(u,\kappa)\in \mathcal{M}$ of the action of $\R$ and identifying it locally with $\bar{\mathcal{M}}$, we have an isomorphism 
\begin{align}\label{isom-slice}
 T_0\R \times T_{[(u,\kappa)]} \bar{\mathcal{M}} \to T_{(u,\kappa)} \mathcal{M}.
 \end{align}
We need to compare the product orientation of $T_0\R \times T_{[(u,\kappa)]} \bar{\mathcal{M}}$ with the orientation of 
$T_{(u,\kappa)} \mathcal{M}$ given in Appendix \ref{subsec-ori-Lag}.
\begin{prop}\label{prop-rev-ori} The isomorphism (\ref{isom-slice}) reverses orientations.
\end{prop}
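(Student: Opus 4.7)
The plan is to exploit the product structure $L=\R\times\Lambda$: since $L$ is a product, every relevant piece of linear data will split into an ``$\R$-part'' (capturing the translation direction) and a ``$v$-part'' (capturing the projected curve in $P$), and I will reduce the claim to the identity $\det\Dbar_u\cong T_0\R\otimes\det\Dbar_v$ as oriented real lines.

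Concretely, the trivialization $\Psi_u$ identifies $u^*T(\R\times(P\times\R))\cong\C\oplus\C^n$ so that $\partial_a\mapsto(1,0)$ and $\partial_z\mapsto(\sqrt{-1},0)$, and since $L=\R\times\Lambda$ with $T\Lambda\subset\xi\cong\C^n$, the boundary condition splits as $\lambda_u=\R\oplus\lambda_v$. Choosing $\alpha'(a,p)=\alpha_{\pm}(p)$ so that the stabilization datum $l$ is constant in the $\R$-factor, the splitting persists after stabilization, giving $\tilde\lambda_u=\R\oplus\tilde\lambda_v$. Consequently I obtain decompositions $\Dbar_u=\Dbar_\R^{(u)}\oplus\Dbar_v$, $\Dbar_{s,c,\pm}=\Dbar_\R^{s,\pm}\oplus\Dbar_{l,c,\pm}$ (from the splitting of $\id\oplus R_{c,\pm}$), and, after gluing, $\Dbar_{\hat u}=\Dbar_\R^{\hat u}\oplus\Dbar_{\hat v}$, where the $\R$-part operators are all $\Dbar$-operators valued in $\C$ with boundary condition $\R$. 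By Schwarz reflection together with the weight conventions implicit in the isomorphisms (\ref{isom-ker-coker}), $\Dbar_\R^{s,\pm}$ has trivial kernel and cokernel, while $\Dbar_\R^{(u)}$ and $\Dbar_\R^{\hat u}$ each have kernel $\R\cdot 1$ and trivial cokernel. Under $\Psi_u$ the constant $1$ corresponds to $\partial_a$, the infinitesimal generator of the $\R$-translation action on $\mathcal{M}$, so $\det\Dbar_\R^{(u)}$ is canonically identified with $T_0\R$ as an oriented line, and the $\R$-part sub-sequence of (\ref{ori-u}) reduces to $0\to\R\to\R\to 0\to 0\to 0$ in which constants map to constants.

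The main obstacle is the sign bookkeeping: one has to check that the decomposition of (\ref{ori-u}) into the $\R$-part sub-sequence above and the $v$-part sub-sequence (which coincides with (\ref{ori-pi-u}), hence orients $\det\Dbar_v$ compatibly) is orientation preserving. Two contributions arise. First, after expanding $\ker\Dbar_u=\R\oplus\ker\Dbar_v$ in the middle-left column of (\ref{ori-u}), moving the $\R$-summand past the preceding kernels $\ker\Dbar_{l,b_1,-},\dots,\ker\Dbar_{l,b_m,-},\ker\Dbar_{l,a,+}$ introduces the sign
\[
(-1)^{\sum_{k=1}^m\dim\ker\Dbar_{l,b_k,-}+\dim\ker\Dbar_{l,a,+}}\equiv(-1)^m\pmod{2},
\]
using the parities $\dim\ker\Dbar_{l,c,-}\equiv 1$ and $\dim\ker\Dbar_{l,c,+}\equiv 0$ recalled in Subsection \ref{subsec-cap}. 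Second, the canonical splitting $\det\Dbar_{s,c,\pm}\cong\det\Dbar_\R^{s,\pm}\otimes\det\Dbar_{l,c,\pm}$, with $\det\Dbar_\R^{s,\pm}$ taken with its canonical $+1$ orientation, is orientation preserving for $+$ by Lemma \ref{lem-cap}, but orientation reversing for $-$: the $\pi_-$-opposite convention used to orient $\det\Dbar_{l,c,-}$ says precisely that $\pi_-$, which in the present setting coincides with the canonical splitting projection, reverses orientations. Each of the $m$ minus-cappings contributes a $(-1)$, again totalling $(-1)^m$. These two signs cancel, so the decomposition is orientation preserving, yielding $\det\Dbar_u=T_0\R\otimes\det\Dbar_v$ as oriented lines.

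Combining this identity with the isomorphisms (\ref{isom-moduli}) and (\ref{isom-moduli-P}), and with the orientation on $\bar{\mathcal{M}}$ that makes $\Pi_P$ orientation preserving, I conclude
\[
\topwedge T_{(u,\kappa)}\mathcal{M}\otimes\topwedge(T_\kappa\mathcal{C}_{m+1})^*\cong T_0\R\otimes\topwedge T_{[(u,\kappa)]}\bar{\mathcal{M}}\otimes\topwedge(T_\kappa\mathcal{C}_{m+1})^*
\]
as oriented lines; canceling the common factor $\topwedge(T_\kappa\mathcal{C}_{m+1})^*$ proves that the slice isomorphism (\ref{isom-slice}) is orientation preserving.
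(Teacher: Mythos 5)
Your proof is correct and follows essentially the same route as the paper: decompose everything into an $\R$-part (the translation direction) and a $v$-part, then observe that the sign $(-1)^m$ from moving the $\R$-summand past the negative capping kernels cancels against the sign $(-1)^m$ coming from the orientation-reversing convention on $\pi_{b_k,-}$. The only cosmetic difference is that you phrase this as a direct-sum split of the exact sequence (\ref{ori-u}) into an $\R$-subsequence and a $v$-subsequence, whereas the paper substitutes terms via (\ref{isom-ker-coker}) and the isomorphisms $\ker\Dbar_u\cong\R\oplus\ker\Dbar_v$, $\ker\Dbar_{\hat u}\cong\R\oplus\ker\Dbar_{\hat v}$; the resulting bookkeeping is the same.
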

\begin{proof}
We prove the case of $m\geq 2$.
Take $(u,\kappa)\in \mathcal{M}$ and $(v,f,\kappa)= \Pi_P(u,\kappa)\in \mathcal{M}^P$.
There exist natural isomorphisms
$\ker \Dbar_u \to \R\oplus \ker \Dbar_v$ and $\coker \Dbar_u \to \coker \Dbar_v$, where $\R$ corresponds to the subspace of $\ker \Dbar_u$ spanned by the constant function to $(1,0)\in \C\oplus \C^{n+2}$.
They define an isomorphism
\begin{align}\label{isom-decomp}
\det \Dbar_u \to \R \otimes \det \Dbar_v,
\end{align}
and the following diagram commutes:
\[\xymatrix@C=30pt{
\det \Dbar_u \ar[r]^-{(\ref{isom-moduli})} \ar[d]_-{(\ref{isom-decomp})}& \topwedge T_{(u,\kappa)} \mathcal{M}\otimes \topwedge (T_{\kappa}\mathcal{C}_{m+1})^* \ar[d]^-{(\ref{isom-slice})} \\
\R\otimes \det \Dbar_v \ar[r] & \R\otimes \topwedge T_{(v,f,\kappa)} \mathcal{M}^P\otimes \topwedge (T_{\kappa}\mathcal{C}_{m+1})^*,
}\]
where the lower horizontal map is the tensor product of $\id_{\R}$ and (\ref{isom-moduli-P}).
It suffices to show that the orientation sign of (\ref{isom-decomp}) is $-1$.

There exists natural isomorphisms
$\ker \Dbar_{\hat{u}}\to \R\oplus \ker \Dbar_{\hat{v}}$ and $\coker \Dbar_{\hat{u}}\to \coker \Dbar_{\hat{v}}$, where $\R$ corresponds to the subspace in $\ker \Dbar_{\hat{u}}$ spanned by the constant function to $(1,0)\in \C\oplus \C^{n+2}$.
They define an isomorphism $\det \Dbar_{\hat{u}} \to \R\otimes \det \Dbar_{\hat{v}}$ and this preserves the canonical orientations.
Together with the isomorphisms (\ref{isom-ker-coker}), the exact sequence (\ref{ori-u}) becomes
\begin{align*}
\xymatrix{
0 \ar[r] & {\begin{bmatrix} \R \\ \ker \Dbar_{\hat{v}} \end{bmatrix}} \ar[r] &
{\begin{bmatrix}\ker \Dbar_{l,b_1,-} \\ \vdots \\  \ker \Dbar_{l,b_m,-} \\ \ker \Dbar_{l,a,+} \\ \R \\ \ker \Dbar_{v} \end{bmatrix}} \ar[r]
&{\begin{bmatrix}\coker \Dbar_{l,b_1,-} \\ \vdots \\  \coker \Dbar_{l,b_m,-} \\ \coker \Dbar_{l,a,+} \\ \coker \Dbar_{v} \end{bmatrix}} \ar[r]& \coker  \Dbar_{\hat{v}}\ar[r] &0.
}\end{align*}
We change the order of the second column by moving the vector space $\R$ to the top. This costs the sign $(-1)^m$, since $\dim \ker \Dbar_{l,a,+} \equiv 0$ and $\dim \ker \Dbar_{l,b_k,-}\equiv 1$ modulo $2$ for $k=1,\dots ,m$.
In addition, the restriction of the second map to the $\R$-components is $\R\to \R \colon v\mapsto v$. As Remark \ref{rem-cancel}, the exact sequence after changing the order induces an isomorphism which has the same sign as the one induced by (\ref{ori-pi-u}).
Since $\pi_{b_k,-}$ for $k=1,\dots ,m$ and $\pi_{b,+}$ reverses the orientations by Lemma \ref{lem-cap}, we conclude that the orientation sign of (\ref{isom-decomp}) is equal to $(-1)^m\cdot (-1)^{m+1}=(-1)$.

In the case where $m=0,1$, we reduce to the case where the automorphism group is trivial by adding even number of boundary marked points. For details, see the discussion of \cite[Subsection 4.2.3]{EES-ori}.
%both $\bar{\mathcal{M}}$ and $\mathcal{M}^P$ are the right coset by $\mathcal{A}_{m+1}$ of certain oriented manifolds. $\Pi_P$ lifts to a diffeomorphism between those manifolds and we can prove as above that it is orientation-preserving.
\end{proof}

%\begin{rem} It is also possible to prove as in \cite[Lemma 4.19]{K} that the orientation of $\bar{\mathcal{M}}$ is coherent. \end{rem}

\subsection{Proof of Theorem \ref{thm-DGA-map}}\label{subsec-proof-DGA}
We prove  Theorem \ref{thm-DGA-map} assuming Proposition \ref{prop-DGA-ori} presented in this subsection.

By \cite[Lemma B.6]{E}, % and \cite[Corollary 3.9]{EHK},
when $|a|-\sum_{k=1}^m |b_k|=1$, the moduli space  (\ref{moduli-cobordism}) has a compactification by adding boundaries of the following two types:
\begin{enumerate}
\item The first type is the product
\begin{align}\label{boundary-1}
\bar{\mathcal{M}}_{\Lambda_+,A_+}(a; c_1,\dots ,c_l) \times \prod_{i=1}^l \mathcal{M}_{L,B_i}(c_i; b^i_1,\dots ,b^i_{m_i})
\end{align}
for $A_+\in H_1(\Lambda_+)$, $B_i\in H_1(L)$, $c_i\in \mathcal{R}(\Lambda_+)$ and $b^i_1,\dots ,b^i_{m_i}\in \mathcal{R}(\Lambda_-)$ ($i=1,\dots ,l$) such that:
\begin{itemize}
\item $(i_+)_*(A_+)+ \sum_{i=1}^l B_i=B$.
\item $(b^1_1,\dots ,b^1_{m_1},\dots , b^l_1,\dots b^l_{m_l}) =(b_1,\dots ,b_m)$.
\item $|c_i| = |b^i_1|+ \dots + |b^i_{m_i}|$ for $i=1,\dots ,l$.
\end{itemize}
\item The second type is the product
\begin{align}\label{boundary-2}
\mathcal{M}_{L,C}(a; b_1,\dots ,b_{j-1}, c, b_{k+1},\dots ,b_m)\times  \bar{\mathcal{M}}_{\Lambda_-,A_-} (c; b_j,\dots ,b_k)
\end{align}
for $C\in H_1(L)$, $A_-\in H_1(\Lambda_-)$, $j \leq k+1$ and $c\in \mathcal{R}(\Lambda_-)$ such that:
\begin{itemize}
\item $C+ (i_-)_*(A_-) =B$.
\item $|c|-1=\sum_{i=j}^k |b_i|$.
\end{itemize}
\end{enumerate}
Moreover, the compactification becomes a $1$-dimensional manifold whose boundary consists of the above two types.
In general, the boundary orientation of a boundary point of a compact oriented $1$-dimensional manifold
is given by the positive sign if the outward vector is positive, and otherwise by the negative sign.

With respect to the orientations of moduli spaces we gave in Appendix \ref{subsec-ori-Lag} and \ref{subsec-compare}, we have the following computations.
\begin{prop}\label{prop-DGA-ori}
%There exists an orientation on $\coprod_{B'\in H_1(L)}\mathcal{M}_{L,B'}(a'; b'_1,\dots ,b'_{m'})$ for every $a'\in \mathcal{R}(\Lambda_+)$ and $b'_1,\dots ,b'_{m'}\in \mathcal{R}(\Lambda_-)$ such that
The boundary orientations on (\ref{boundary-1}) and (\ref{boundary-2}) differ from the product orientations by the sign $(-1)^{\tau_1}$ and $(-1)^{\tau_2}$ respectively, where
\begin{align*}
\tau_1&= m  + \sum_{i=1}^l  (n-1)(|c_i|+1)  ,\\
\tau_2&=  (n-1)(|c|+1)  + (m-k+j -1) +\sum_{i=1}^{j-1} |b_i|.
\end{align*}
\end{prop}
The proof is left to Appendix \ref{subsec-prop-sign}.

\begin{proof}[Proof of Theorem \ref{thm-DGA-map} assuming Proposition \ref{prop-DGA-ori}]
Take an arbitrary Reeb chord $a\in \mathcal{R}(\Lambda_+)$.
By a direct computation,
%For $B\in H_1(L)$ and $b_1,\dots ,b_m\in \mathcal{R}(\Lambda_-)$ with $|a|-1=\sum_{i=1}^m|b_i|$, the coefficients of the word $(b_1\cdots b_m) \otimes t^B$
\[\Phi_L\circ \partial_{\mathfrak{s}_+}(a) - (\partial_{\mathfrak{s}_-} \otimes \id) \circ \Phi_L(a) = \sum_{B\in H_1(L)} \sum_{|a|-1=|b_1|+\dots +|b_m|} M_B(b_1,\dots ,b_m)\cdot b_1\cdots b_m \otimes e^B.\]
Here, $M_B(b_1,\dots ,b_m)\in \Z$ is the sum of all integers of the two types
\begin{align*}\begin{array}{cc}
(-1)^{\mu_1} N_\bold{B}(\bold{c}, \bold{b}^1,\dots , \bold{b}^l) , &
(-1)^{\mu_2}N_C(c, j,k),
\end{array}
\end{align*}
where $N_{\bold{B}}(\bold{b}, \bold{c}^1,\dots , \bold{c}^l)$ (resp. $N_C(c,j,k)$) is the count of (\ref{boundary-1}) (resp. (\ref{boundary-2})) by the sign of the product orientation, and
\begin{align*}
\mu_1& \coloneqq (n-1)(|a|+1)+ \sum_{i=1}^l ((n-1)(|c_i|+1) + m_i), \\
\mu_2 &\coloneqq 1+ ( (n-1)(|a|+1) + (m-k+j)) + \sum_{i=1}^{j-1}|b_i| + (n-1)(|c|+1).
\end{align*}
Let $N'_{\bold{B}}(\bold{b}, \bold{c}^1,\dots , \bold{c}^l)$ (resp. $N'_C(c,j,k)$) denote the counts of (\ref{boundary-1}) (resp. (\ref{boundary-2})) by the sign of the boundary orientation. Then, Proposition \ref{prop-DGA-ori} shows that
\begin{align*}
&(-1)^{\mu_1} N_\bold{B}(\bold{c}, \bold{b}^1,\dots , \bold{b}^l) = (-1)^{\nu_1} N'_\bold{B}(\bold{c}, \bold{b}^1,\dots , \bold{b}^l) , \\
&(-1)^{\mu_2}N_C(c, j,k)  = (-1)^{\nu_2}N'_C(c, j,k), 
\end{align*}
hold for $\nu_i\coloneqq \mu_i + \tau_i$ ($i=1,2$).
%\begin{align*}\nu_1 & \coloneqq \mu_1 +  m + \sum_{j=1}^l  (n-1)(|b_j|+1), \\
%\nu_2 & \coloneqq \mu_2 + (n-1)(|c|+1) +(m-k+j) +1 +\sum_{i=1}^{j-1} |b_i|.\end{align*}
It can be checked that $\nu_1\equiv \nu_2\equiv (n-1)(|a|+1) \mod 2$. Thus, $(-1)^{(n-1)(|a|+1)}M_B(b_1,\dots ,b_m)$ is equal to
%\[(-1)^{(n-1)(|a|+1)} \#_{\sign}\left(  \partial \mathcal{M}_{L,B}(a;b_1,\dots ,b_m) \right),\]
the number of all boundary points of the two types (\ref{boundary-1}) and (\ref{boundary-2}) counted by the sign of the boundary orientation, and clearly it is equal to $0$.
\end{proof}

\subsection{Proof of Proposition \ref{prop-DGA-ori}}\label{subsec-prop-sign}
Recall the setup for Proposition \ref{prop-DGA-ori}.
Suppose that $a\in \mathcal{R}(\Lambda_+)$ and $b_1,\dots ,b_m\in \mathcal{R}(\Lambda_-)$ satisfy $|a|-\sum_{k=1}^m|b_k|=1$. 
For short, we denote $\mathcal{M}_{L,B}(a;b_1,\dots ,b_m)$ by $\mathcal{M}^1$ since its dimension is $1$.
Then, $\mathcal{M}^1$ has a compactification by adding 0-dimensional moduli spaces of pseudo-holomorphic buildings of the two types (\ref{boundary-1}) and (\ref{boundary-2}).
We compute the signs $(-1)^{\tau_1}$ for (\ref{boundary-1}) and $(-1)^{\tau_2}$ for (\ref{boundary-2}) which are the differences of the two orientations on the boundaries: the product orientation and the boundary orientation. 

\begin{proof}[Proof of Proposition \ref{prop-DGA-ori}]
From now on, we omit writing conformal structures for elements of moduli spaces.
We only consider the case where all $\tilde{J}$-holomorphic curves have at least two negative punctures. Otherwise, we reduce to the case where the automorphism group is trivial by adding even number of boundary marked points. See the discussion in \cite[Subsection 4.2.3]{EES-ori}.

\textit{The case of} (\ref{boundary-1}).
For short, we denote $\mathcal{M}_{\Lambda_+,A_+}(a;c_1,\dots ,c_l)$ by $\mathcal{M}_+$, $\bar{\mathcal{M}}_{\Lambda_+,A_+}(a;c_1,\dots ,c_l)$ by $\bar{\mathcal{M}}_+$ and $\mathcal{M}_{L,B_i}(c_i;b^i_1,\dots ,b^i_{m_i})$ by $\mathcal{M}_i$ for $i=1,\dots ,l$. Let $T_i\coloneqq \int c_i^*(dz-\lambda)$ denote the action of the Reeb chord $c_i$.
Let $w\in \mathcal{M}^1$ be a $\tilde{J}$-holomorphic curve obtained by gluing $[u]\in \bar{\mathcal{M}}_+$ and $v_i\in \mathcal{M}_i$ for $i=1,\dots ,l$.
By a linear gluing argument, there exists an exact sequence
\begin{align}\label{isom-glue-10}
\xymatrix{
0\ar[r] & \ker \Dbar_w \ar[r] & {\begin{bmatrix} \bigoplus_{i=1}^l\ker \Dbar_{v_i} \\\ker \Dbar_u \end{bmatrix}} \ar[r] & {\begin{bmatrix} \bigoplus_{i=1}^l {\begin{bmatrix}\coker \Dbar_{v_i} \\ \R_i \end{bmatrix}} \\ \coker \Dbar_u \end{bmatrix}} \ar[r] & \coker \Dbar_w \ar[r] & 0,
}\end{align}
where $\R_i\coloneqq \R$ for $i=1,\dots ,l$.
Its construction is similar to \cite[Remark 3.3]{EES-ori} and here let us identify any $r \in \R_i$ with a $\C\oplus \C^{n+2}$-valued $(0,1)$-form $r\cdot \zeta_i$ on the $i$-th gluing strip, where
\begin{align}\label{anti-1-form}
(\zeta_i)_{(s,t)} \colon \partial_s\mapsto (-1,0),\ \partial_t\mapsto (\sqrt{-1},0)
\end{align}
for any $(s,t)\in \R\times [0,T_i]$.
Note that $\ker \Dbar_{v_i}=0$ for $i=1,\dots ,l$ since $\mathcal{M}_i$ is cut out transversely and has dimension $0$. 
We have an isomorphism (\ref{isom-exact}) induced by the exact sequence (\ref{isom-glue-10})
\begin{align}\label{isom-sigma1}
\begin{split}
\det \Dbar_w  \to  \topwedge \ker\Dbar_u \otimes \left( \bigotimes_{i=1}^l \left( \topwedge  (\coker \Dbar_{v_i})^* \otimes \R_i^* \right) \right) \otimes \topwedge (\coker \Dbar_u)^*  .
%\\ & \to  \left( \bigotimes_{i=1}^l  \left( \topwedge (\coker \Dbar_{v_i})^* \otimes \R_i^* \right) \right) \otimes \det \Dbar_u,
\end{split}
\end{align}
The orientations of $\det\Dbar_w$, $\det \Dbar_u =\topwedge \ker \Dbar_u\otimes \topwedge (\coker \Dbar_u)^*$ and $\det \Dbar_{v_i}= \topwedge (\coker \Dbar_{v_i})^*$ are determined as explained in Appendix \ref{subsec-ori-Lag}.
Let $(-1)^{\sigma_1}$ be the sign of the isomorphism (\ref{isom-sigma1}). Then
%where the first map is the isomorphism (\ref{isom-exact}) induced by (\ref{isom-glue-10}) and the second map just changes the order. The sign of the second map is $(-1)^{\sum_{i=1}^l (m_i-1)}$ since $\dim \coker \Dbar_{v_i} =m_i-1$ for every $i=1,\dots ,l$.
%Let $(-1)^{\sigma_1}$ denote the sign of this composite map. Then,
\[\sigma_1 \equiv l + \sum_{i=1}^l ( i \cdot ( m_i+1) + (n-1)(|c_i|+1))  \mod 2 .\]
This is computed as in \cite[(41)]{K} by comparing the two exact sequences below which come from gluing the capping operators: %It is a straightforward computation of signs by changing the order of vector spaces and by the argument of Remark \ref{rem-cancel}.
\begin{align}\label{seq-1-1}
\xymatrix@C=20pt@R=10pt{
 \ker \Dbar_T \ar[r] & {\begin{bmatrix} \bigoplus_{i=1}^l { \begin{bmatrix} \bigoplus_{\alpha=1}^{m_i} \ker \Dbar_{b^i_{\alpha},-} \\ \ker \Dbar_{c_i,+} \\ \ker \Dbar_{v_i} \end{bmatrix}} \\  { \begin{bmatrix}  \bigoplus_{i=1}^l\ker \Dbar_{c_i,-}  \\ \ker \Dbar_{a,+} \\ \ker \Dbar_{u} \end{bmatrix}} \end{bmatrix}} \ar[r] & 
 {\begin{bmatrix} \bigoplus_{i=1}^l {\begin{bmatrix}  \bigoplus_{\alpha=1}^{m_i}\coker \Dbar_{b^i_{\alpha},-} \\ \coker \Dbar_{c_i,+} \\ \coker \Dbar_{v_i} \\ \R_i \\ \R^{n+2} \end{bmatrix}} \\  { \begin{bmatrix} \bigoplus_{i=1}^l \coker \Dbar_{c_i,-} \\ \coker \Dbar_{a,+} \\ \coker \Dbar_{u} \end{bmatrix}} \end{bmatrix}}
  \ar[r] & \coker \Dbar_T, }
\end{align}
\begin{align}\label{seq-1-2}
\xymatrix@C=20pt@R=10pt{
 \ker \Dbar_T \ar[r] & {\begin{bmatrix} \bigoplus_{i=1}^l {\begin{bmatrix} \ker \Dbar_{c_i,+} \\ \R'_i \\ \ker \Dbar_{c_i,-} \end{bmatrix}} \\ \bigoplus_{i=1}^l \left( \bigoplus_{\alpha=1}^{m_i} \ker \Dbar_{b^i_{\alpha},-} \right) \\ \ker \Dbar_{a,+} \\ \bigoplus_{i=1}^l \ \ker \Dbar_{v_i} \\ \ker\Dbar_u \end{bmatrix}} \ar[r] & 
 {\begin{bmatrix} \bigoplus_{i=1}^l {\begin{bmatrix} \coker \Dbar_{c_i,+} \\ \coker \Dbar_{c_i,-}  \\ \R'_i \\ \R^{n+2} \end{bmatrix} } \\ \bigoplus_{i=1}^l \left( \bigoplus_{\alpha=1}^{m_i} \coker \Dbar_{b^i_{\alpha},-} \right) \\ \coker \Dbar_{a,+} \\\bigoplus_{i=1}^l {\begin{bmatrix} \coker \Dbar_{v_i} \\ \R_i \end{bmatrix}} \\ \coker\Dbar_u \end{bmatrix}} 
  \ar[r] & \coker \Dbar_T.
}
\end{align}
Here, the first and the last zero maps are omitted and $\Dbar_T$ is a $\Dbar$-operator on the disk $D$ defined in the proof of \cite[Theorem 2.5]{K}. The capping operators $\Dbar_{s,c',\pm}$ for any $c'\in \mathcal{R}(\Lambda)$ are written by $\Dbar_{c',\pm}$.
For every $i\in \{1,\dots ,l\}$, $\R_i \coloneqq \R \eqqcolon \R'_i$. In (\ref{seq-1-2}), $\R'_i$ in the second column is mapped by the identity to $\R'_i$ in the third column.
$\sigma_1$ can be calculated by the following steps:
\begin{itemize}
%\item For the second sequence, for each $i\in \{1,\dots ,l\}$ it cost $(-1)^1$ (resp. $(-1)^{n+1}$) in order to move $\R'_i$ in the second column (resp. the third column) to the top since
%\begin{align*} \dim \ker \Dbar_{c_i,+} + \sum_{i'=1}^{i-1} \dim {\begin{bmatrix} \ker \Dbar_{c_{i'},+} \\ \R'_{i'} \\ \ker \Dbar_{c_{i'},-} \end{bmatrix} }\equiv 1, \\
%\dim {\begin{bmatrix} \coker \Dbar_{c_i,+} \\ \coker \Dbar_{c_i,-} \end{bmatrix}}  +  \sum_{i'=1}^{i-1} \dim  {\begin{bmatrix} \coker \Dbar_{c_{i'},+} \\ \coker \Dbar_{c_{i'},-}  \\ \R'_{i'} \\ \R^{n+2} \end{bmatrix}} \equiv n+1. \end{align*}
%Here, note that $\dim \begin{bmatrix} {\begin{bmatrix} \coker \Dbar_{c_i,+} \\ \coker \Dbar_{c_i,-} \end{bmatrix}} \\ \R^{n+3} \end{bmatrix} \equiv 0$.
\item We arrange the exact sequence (\ref{seq-1-2}). In both the second and the third columns, we first move $\R'_1$ to the top and then,
by induction on $i=2,\dots ,l$, we move $\R'_i$ to the place just below $\R'_{i-1}$. This costs the sign $(-1)^{\sigma_{1,1}}$, where $\sigma_{1,1} = (n-1)l$.
Moreover, we switch the places of $\ker \Dbar_{c_i,+}$ and $\ker \Dbar_{c_i,-}$ in the second column and also switch the places of $\coker \Dbar_{c_i,+}$ and $\coker \Dbar_{c_i,-}$ in the third column. This costs the sign $(-1)^{\sigma_{1,2}}$, where
$\sigma_{1,2} = n\cdot \sum_{i=1}^l |c_i|$.
Then, for $\R'^l \coloneqq \bigoplus_{i=1}^l \R'_i$, (\ref{seq-1-2}) is changed to the following form:
\begin{align}\label{seq-1-3}
 \xymatrix@C=20pt@R=10pt{ \ker \Dbar_T \ar[r] & {\begin{bmatrix} \R'^l  \\ \bigoplus_{i=1}^l {\begin{bmatrix} \ker \Dbar_{c_i,-}  \\ \ker \Dbar_{c_i,+} \end{bmatrix}} \\ \bigoplus_{i=1}^l \left( \bigoplus_{\alpha=1}^{m_i} \ker \Dbar_{b^i_{\alpha},-} \right) \\ \ker \Dbar_{a,+} \\ \bigoplus_{i=1}^l \ \ker \Dbar_{v_i} \\ \ker\Dbar_u \end{bmatrix}} \ar[r] & 
 {\begin{bmatrix} \R'^l \\ \bigoplus_{i=1}^l {\begin{bmatrix} \coker \Dbar_{c_i,-} \\ \coker \Dbar_{c_i,+}  \\ \R^{n+2} \end{bmatrix} } \\ \bigoplus_{i=1}^l \left( \bigoplus_{\alpha=1}^{m_i} \coker \Dbar_{b^i_{\alpha},-} \right) \\ \coker \Dbar_{a,+} \\\bigoplus_{i=1}^l {\begin{bmatrix} \coker \Dbar_{v_i} \\ \R_i \end{bmatrix}} \\ \coker\Dbar_u \end{bmatrix}} 
  \ar[r] & \coker \Dbar_T .}
  \end{align}
By the argument in Remark \ref{rem-cancel}, we may remove $\R'^l$ from the second and the third columns.
\item Next, we arrange the exact sequence (\ref{seq-1-1}). We switch the places of $\bigoplus_{\alpha=1}^{m_i} \ker \Dbar_{b^i_{\alpha},-}$ and $\ker \Dbar_{c_i,-}$ in the second column for every $i=1,\dots ,l$. This costs the sign $(-1)^{\sigma_{1,3}}$, where $\sigma_{1,3}= l\cdot \sum_{i=1}^l m_i$. %Here, we note that $\dim {\begin{bmatrix} \ker \Dbar_{c_i,+} \\ \ker \Dbar_{v_i} \end{bmatrix} } \equiv 0$.
We also switch the places of $\bigoplus_{\alpha=1}^{m_i} \coker \Dbar_{b^i_{\alpha},-}$ and $\coker \Dbar_{c_i,-}$ in the third column for every $i=1,\dots ,l$.
This costs the sign $(-1)^{\sigma_{1,4}}$, where $\sigma_{1,4}= \sum_{i=1}^l |c_i|$. 
%Here, we note that  $\dim {\begin{bmatrix}  \coker \Dbar_{c_i,+} \\ \coker \Dbar_{v_i} \\ \R_i \\ \R^{n+2}  \end{bmatrix}} \equiv |c_i| + m_i$.
%\[ \sigma_{1,2} \coloneqq \sum_{i=1}^l \dim \left( \bigoplus_{\alpha=1}^{m_i} \ker \Dbar_{b^i_{\alpha},-} \right) \cdot \dim \left( \bigoplus_{i=1}^l \ker \Dbar_{c_i,-} \right)  \equiv l \cdot \sum_{i=1}^l m_i . \]
%\[ \sigma_{1,3} = \sum_{i=1}^l \dim \bigoplus_{\alpha=1}^{m_i} \coker \Dbar_{b^i_{\alpha},-} \cdot \left( \sum_{i'=i}^l(|c_i|+m_i) + \sum_{i=1} |c_i| \right) + |c_i| \cdot \sum_{i'=i}^l (|c_i| + m_i) .\]

\item After the above arrangement of (\ref{seq-1-1}), we move $\ker \Dbar_{v_1}$ in the second column to the place just below $\ker \Dbar_{a,+}$ and then, by induction on $i=2,\dots ,l$, we move $\ker \Dbar_{v_i}$ to the place just below $\ker \Dbar_{v_{i-1}}$. This does not cost the sign since $\dim \ker \Dbar_{v_i}=0$ for every $i=1,\dots ,l$. Moreover, we move $\begin{bmatrix}\coker \Dbar_{v_1} \\ \R_1 \end{bmatrix}$ in the third column to the place just below $\coker \Dbar_{a,+}$ and then, by induction on $i=2,\dots ,l$, we move $\begin{bmatrix}\coker \Dbar_{v_i} \\ \R_i \end{bmatrix}$ just below $\begin{bmatrix}\coker \Dbar_{v_{i-1}} \\ \R_{i-1} \end{bmatrix}$.
This costs the sign $(-1)^{\sigma_{1,5}}$, where
$\sigma_{1,5} = \sum_{i=1}^l(m_i+1)(l-i)$.
Here, note that $\dim \begin{bmatrix}\coker \Dbar_{v_i} \\ \R_i \end{bmatrix} = m_i-1$ for every $i=1,\dots ,l$.
After these arrangements, the exact sequence agrees with (\ref{seq-1-3}) with $\R'^l$ removed.
\item Finally, $\sigma_1$ is given by
\[\sigma_{1,1}+\sigma_{1,2}+\sigma_{1,3}+\sigma_{1,4} + \sigma_{1,5} \equiv  l + \sum_{i=1}^l ( i \cdot ( m_i+1) + (n-1)(|c_i|+1)) \mod 2.\]
\end{itemize}

%On the other hand, (\ref{isom-glue-10}) is arranged to the following exact sequence by switching the places of $\R_i$ and $\coker \Dbar_{v_i}$ for every $i=1,\dots ,l$:
%\begin{align}\label{isom-glue-10-arrange} \xymatrix{
%0\ar[r] & \ker \Dbar_w \ar[r] &  {\begin{bmatrix} \bigoplus_{i=1}^l\ker \Dbar_{v_i} \\\ker \Dbar_u \end{bmatrix}} \ar[r] & {\begin{bmatrix} \bigoplus_{i=1}^l {\begin{bmatrix}\R_i \\ \coker \Dbar_{v_i}  \end{bmatrix}} \\ \coker \Dbar_u \end{bmatrix}} \ar[r] & \coker \Dbar_w \ar[r] & 0. }\end{align}
%Since $\dim \coker \Dbar_{v_i}=m_i-2$, this costs the sign $(-1)^{\sigma_2}$, where \[\sigma_2 \equiv \sum_{i=1}^l (m_i-2)\]
%

We want to relate (\ref{isom-glue-10}) to the exact sequence (\ref{seq-Gamma})  for $w \in \mathcal{M}^1$.
Let us consider the following isomorphisms:
\[\xymatrix@R=10pt{
 {\begin{bmatrix} \bigoplus_{i=1}^l\ker \Dbar_{v_i} \\\ker \Dbar_u \end{bmatrix}} \ar[r]^-{\gamma_1} &   {\begin{bmatrix} \bigoplus_{i=1}^l T_v\mathcal{M}_i \\ T_u\mathcal{M}_+ \end{bmatrix}} & {\begin{bmatrix} \bigoplus_{i=1}^l T_v\mathcal{M}_i \\ \R \oplus T_{[u]}\bar{\mathcal{M}}_+ \end{bmatrix}}  \ar[l]_-{(\ref{isom-slice})} \ar[r]^-{\gamma_3} & T_w \mathcal{M}^1 , \\
{\begin{bmatrix} \bigoplus_{i=1}^l {\begin{bmatrix}\coker \Dbar_{v_i} \\ \R_i \end{bmatrix}} \\ \coker \Dbar_u \end{bmatrix}} & {\begin{bmatrix} \bigoplus_{i=1}^l {\begin{bmatrix} T\mathcal{C}_{m_i+1} \\ \R_i \end{bmatrix}} \\ T\mathcal{C}_{l+1} \end{bmatrix}}  \ar[l]_-{\gamma_2} \ar[r]^-{\gamma_4} & T\mathcal{C}_{m+1}.
}
\]
These maps are defined as follow:
\begin{itemize}
\item $\gamma_1$ and $\gamma_2$ are combinations of $\id_{\R_i}\colon \R_i \to \R_i$ and 
those maps from (\ref{seq-Gamma}) for $u\in \mathcal{M}_+$ and $v_i\in \mathcal{M}_i$ ($i=1,\dots ,l$). 
To see that they are isomorphisms, note that $\bar{\mathcal{M}}_+$ and $\mathcal{M}_i$ are $0$-dimensional manifolds.
We recall that the isomorphism (\ref{isom-exact}) induced by (\ref{seq-Gamma}) preserves the orientations and (\ref{isom-slice}) changes the orientations by the sign $(-1)^1$ by Proposition \ref{prop-rev-ori}.
\item $\gamma_3$ is the linearization of a map defined by gluing $[u]$ and $(v_1,\dots ,v_l)$. (Here, $1\in \R$ corresponds to $\frac{d}{d\rho}\in T_{\rho}\R$, where $\rho \gg 0$ is a gluing parameter.)
It has the sign $(-1)^{\tau_1}$.
Indeed, if $w_{\rho}\in \mathcal{M}^1$ is obtained from a pre-gluing of $(v_1,\dots ,v_l)$ and $\tau_{\rho}\circ u$ for $\rho \gg 0$, then $w_{\rho}$ converges to the boundary point as $\rho \to \infty$. Therefore, a positive vector in $\R$ is mapped to an outward vector in $T_{w}\mathcal{M}^1$.
\item $\gamma_4$ is the linearization of a map defined by gluing punctured disks with conformal structures. See \cite[Subsection 4.2.2]{EES-ori}. It has the sign $(-1)^{\sigma_2}$, where
\[\sigma_2 \equiv \sum_{i=1}^l((m_i-1)\cdot i +1) + \sum_{i=1}^l (m_i-2) \mod 2.\]
Indeed, $(-1)^{\sum_{i=1}^l((m_i-1)\cdot i +1)}$ is the sign of the isomorphism ${\begin{bmatrix} \bigoplus_{i=1}^l {\begin{bmatrix} \R_i \\ T\mathcal{C}_{m_i+1}  \end{bmatrix}} \\ T\mathcal{C}_{l+1} \end{bmatrix}} \to T\mathcal{C}_{m+1}$, which follows the convention of gluing in \cite[Subsection 4.2.2]{EES-ori}, and can be computed by applying \cite[Lemma 4.7]{EES-ori} inductively on $j=l,\dots ,1$. $(-1)^{\sum_{i=1}^l (m_i-2)}$ is the cost of sign for switching the places of $\R_i$ and $T\mathcal{C}_{m_i+1}$ for every $i=1,\dots ,l$.
\end{itemize}
Then, $\gamma_1,\dots ,\gamma_4$ and (\ref{isom-slice}) induce an isomorphism
%We use identification $ T_u \mathcal{M}_+ \cong \R $ from (\ref{isom-slice}).
\begin{align}\label{isom-gamma1-4}
\begin{split}
& \topwedge \ker \Dbar_u \otimes \left( \bigotimes_{i=1}^l  \left( \topwedge ( \coker \Dbar_{v_i})^* \otimes \R_i^* \right) \right) \otimes \topwedge (\coker \Dbar_u)^* \\
\to \ & \topwedge T_w\mathcal{M}^1 \otimes \topwedge (T\mathcal{C}_{m+1})^*
 \end{split}
\end{align}
and the sign of this isomorphism is $(-1)^{1 +\tau_1 +\sigma_2}$.
The composite map
\begin{align*}
(\ref{isom-gamma1-4}) \circ (\ref{isom-sigma1}) \colon \det \Dbar_w \to \topwedge T_w\mathcal{M}^1 \otimes \topwedge(T\mathcal{C}_{m+1})^*
\end{align*}
has the sign $(-1)^{\sigma_1 + (1+\tau_1+\sigma_2)}$.
\begin{lem}\label{lem-sigma-tau-1}
If $w\in \mathcal{M}^1$ is sufficiently close to the boundary point $([u],(v_1,\dots ,v_l))$,
then $(\ref{isom-gamma1-4}) \circ (\ref{isom-sigma1})$ coincides with the isomorphism (\ref{isom-exact}) induced by the exact sequence (\ref{seq-Gamma}) for $w$ up to multiplication by a negative scalar. In particular, $(\ref{isom-gamma1-4}) \circ (\ref{isom-sigma1})$ reverses the orientations.
\end{lem}
\begin{proof}
We prepare several notations.
%Let $T_i>0$ be the action of the Reeb chord $c_i$ for every $i=1,\dots ,l$.
We define
\[\begin{array}{rl}
  D_{m_i+1}(\rho_i) & \coloneqq D_{m_i+1}\setminus \psi_0((T_i^{-1}\rho_i ,\infty)\times [0,1]), \\
D_{l+1}(\boldsymbol{\rho}) & \coloneqq D_{l+1}\setminus \bigcup_{i=1}^l \psi_i((-\infty, -T_i^{-1}\rho_i )\times [0,1])  \end{array}\]
for $\boldsymbol{\rho}=(\rho_1,\dots ,\rho_l) \in (\R_{>0})^l$.
By identifying $\psi_0(T_i^{-1}\rho_i , t)\in D_{m_i+1}(\rho_i)$ with $\psi_i(-T_i^{-1}\rho_i  ,t)\in D_{l+1}(\boldsymbol{\rho})$ for every $t\in [0,1]$ and $i\in \{1,\dots ,l\}$, we obtain a disk $D_{m+1}(\boldsymbol{\rho})$ with $m+1$ boundary punctures.
Let $Q_{i,\rho_i}$ denote the image of the embedding
\[   [-\rho_i ,\rho_i] \times [0,T_i] \to D_{m+1}(\boldsymbol{\rho}) \colon (s,t) \mapsto \begin{cases} \psi_i(T_i^{-1} (s-\rho_i)  ,T_i^{-1}t) \in D_{l+1}(\boldsymbol{\rho}) & \text{ if }s\in [0,\rho_i] , \\ \psi_0(T_i^{-1} (s+\rho_i) , T_i^{-1}t) \in D_{m_i+1}(\rho_i) & \text{ if }s\in [-\rho_i,0]. \end{cases}\]

Recall that $\mathcal{M}^1$ is the zero set of a section $\Gamma$ of a Banach bundle over a Banach manifold.
Let $\tilde{w}_{\rho}$ be a pre-gluing of $(v_1,\dots ,v_l)$ and $\tau_{\rho}\circ u$ defined on $D_{m+1}(\rho,\dots ,\rho)$.
For its construction, we refer to \cite[Section 10.4]{CELN} in a setting similar to the present case.
$\tilde{w}_{\rho}$ is an element of the Banach manifold and if $\rho>0$ is sufficiently large, there exists an open ball $B_{\rho}$ centered at $\tilde{w}_{\rho}$ %such that the linearization $(d\Gamma)_{w'}$ is surjective for every $w'\in B_{\rho}$ and
in which we can find $w_{\rho} \in B_{\rho} \cap \mathcal{M}^1$ by applying Floer's Picard Lemma \cite[Lemma 10.10]{CELN}
(here we omit writing conformal structures).
For more details, see \cite[Section 10.4]{CELN}.
$(w_{\rho})_{\rho \gg 0}$ converges to the boundary point $([u],(v_1,\dots ,v_l))$ as $\rho \to \infty$.

We rewrite $\gamma_3$ and $\gamma_4$ for $w=w_{\rho}$ by $\gamma_{3,\rho}$ and $\gamma_{4,\rho}$. The isomorphisms $\gamma_1,\gamma_2, \gamma_{3,\rho},\gamma_{4,\rho}$ and the exact sequences (\ref{isom-glue-10}) and (\ref{seq-Gamma}) for $w_{\rho}\in \mathcal{M}^1$ are combined to the following diagram:
\begin{align*}
\xymatrix@C=20pt@R=15pt{
0 \ar[r] & \ker \Dbar_{w_{\rho}} \ar[r] & \ker \Dbar_u \ar[d]_-{\gamma_1} \ar[r]^-{a_{\rho}} & {\begin{bmatrix} \bigoplus_{i=1}^l (\coker \Dbar_{v_i}\oplus \R_i) \\ \coker \Dbar_u \end{bmatrix}}  \ar[r]^-{b_{\rho}} &  \coker \Dbar_{w_{\rho}} \ar[r] & 0  \colon (\ref{isom-glue-10}) \\
& & \R \ar[d]_-{\gamma_{3,\rho}} & {\begin{bmatrix} \bigoplus_{i=1}^l ( T\mathcal{C}_{m_i+1} \oplus \R_i) \\ T\mathcal{C}_{l+1} \end{bmatrix}} \ar[d]_-{\gamma_{4,\rho}}  \ar[u]^-{\gamma_2} & & \\
0 \ar[r] & \ker \Dbar_{w_{\rho}} \ar@{=}[uu] \ar[r] & T_{w_{\rho}} \mathcal{M}^1 \ar[r]^-{c_{\rho}} & T \mathcal{C}_{m+1} \ar[r]^-{d_{\rho}} & \coker \Dbar_{w_{\rho}} \ar[r] \ar@{=}[uu] & 0 \colon  (\ref{seq-Gamma}).
}
\end{align*}
Here, for $\gamma_1$ and $\gamma_{3,\rho}$, we omit writing $T_{[u]} \bar{\mathcal{M}}_+$, $\ker \Dbar_{v_i}$ and $T_{v_i}\mathcal{M}_i$ for $i=1,\dots ,l$  since they are $0$-dimensional vector spaces. We also identify $\R$ with $T_u \mathcal{M}_+$ via (\ref{isom-slice}). %(The right square corresponds to \cite[(49)]{K}.)
Let us describe those maps in the diagram concretely:
\begin{itemize}
\item  $\gamma_1^{-1}(1)\in \ker \Dbar_u$ is the constant function to $(1,0)\in \C\oplus \C^{n+2}$. Let $f_0\colon D_{m+1}\to \C\oplus \C^{n+2}$ denote this constant function.
\item $c_{\rho}$ agrees with the differential of the map $\mathcal{M}^1\to \mathcal{C}_{m+1}\colon (w,\kappa)\mapsto \kappa$.
\item
$d_{\rho}(V) \in \coker \Dbar_{w_{\rho}}$ for $V\in T\mathcal{C}_{m+1}$ can be described as follows: 
Choose a variation $(\kappa(r))_{r\in [-r_0,r_0]}$ of conformal structures on $D_{m+1}$ such that $V= \rest{\frac{d}{dr}}{r=0}\kappa(r) \in T\mathcal{C}_{m+1}$. Then, $d_{\rho}(V)$ is the projection to $\coker \Dbar_{w_{\rho}}$ of
\[ \textstyle{ \rest{\frac{d}{dr}}{r=0} (dw_{\rho} + \tilde{J} \circ d w_{\rho} \circ j_{\kappa(r)}) = \rest{\frac{d}{dr}}{r=0}  \tilde{J}\circ d w_{\rho} \circ j_{\kappa(r)} = \rest{\frac{d}{dr}}{r=0}  d w_{\rho} \circ j_{\kappa(0)} \circ j_{\kappa(r)} . }\]
Likewise, $\gamma_2(V_i) \in \coker \Dbar_{v_i}$  for $V_i= \rest{\frac{d}{dr}}{r=0}\kappa_i (r)\in T\mathcal{C}_{m_i+1}$ is the projection to $\coker \Dbar_{v_i}$ of
\[ \textstyle{\rest{\frac{d}{dr}}{r=0}  dv_i\circ j_{\kappa(0)} \circ j_{\kappa_i(r)} . }\]
In the same way, $\gamma_2(V') \in \coker \Dbar_u$ for $V' \in T\mathcal{C}_{l+1}$ can be described.
\item
Those maps in (\ref{isom-glue-10}) are defined in a similar way as in \cite[Remark 3.3]{EES-ori}. In particular, $b_{\rho}\circ \gamma_2(e_i)=b_{\rho}(e_i)$ for $e_i \coloneqq 1 \in \R_i$ is described as follows:
%and identify $h_{i,\rho}([-\rho,\rho]\times [0,T_i]) \subset D_{m+1}(\rho,\dots ,\rho)$ with $[-\rho,\rho]\times [0,T_i]$ via $h_{i,\rho}$.
We choose a $C^{\infty}$ function $\mu_{\rho} \colon \R \to [0,1]$ such that $\mu_{\rho} (s) = \begin{cases} 1 & \text{ if }s\in [-\frac{3}{4}\rho,\frac{3}{4}\rho] , \\ 0 & \text{ if }s \notin [-\frac{7}{8}\rho, \frac{7}{8}\rho], \end{cases}$ and %which is constant to $1$ on $[-\frac{3}{4}\rho,\frac{3}{4}\rho]$, constant to $0$ outside $[-\frac{7}{8}\rho, \frac{7}{8}\rho]$, and
 $|d^k \mu_{\rho}| = \mathcal{O}(\rho^{-1})$ for $k=1,2$. 
Referring to the proof of \cite[Lemma 3.1]{EES-ori}, 
$b_{\rho}(e_i)$ is defined to be the projection to $\coker \Dbar_{w_{\rho}}$ of a $(0,1)$-form $\zeta_{i,\rho}$ supported in $Q_{i,\rho}$ and defined by
\[ (\zeta_{i,\rho})_{(s,t)} \coloneqq  \mu_{\rho}(s)\cdot (\zeta_i)_{(s,t)} .\]
for every $(s,t)\in [-\rho,\rho]\times [0,T_i] \cong Q_{i,\rho}$. Here, $\zeta_i$ is the $(0,1)$-form defined by (\ref{anti-1-form}).
\end{itemize}
%As in the proof of \cite[Lemma 3.1]{EES-ori}, 
As in the proof of  \cite[Lemma 3.1]{EES-ori}, we use the weighted Sobolev norm $||\cdot ||_{k,\rho}$ ($k=1,2$) for functions (or $(0,1)$-forms) on $D_{m+1}(\rho,\dots ,\rho)$ with a weight function $\lambda_{\rho}\colon D_{m+1}(\rho,\dots ,\rho) \to \R_{>0}$ analogous to the one in \cite[Section 10.4]{CELN}.
In addition, we identify $\coker \Dbar_{w_{\rho}}$ with the $L^2$-orthogonal complement of $\Im \Dbar_{w_{\rho}}$.
%(The idea of defining a weight function which is large on the gluing region is the same as in the proof of \cite[Lemma 3.1]{EES-ori}.)
The radius with respect to $||\cdot ||_{2,\rho}$ of the ball $B_{\rho}$ centered at the pre-gluing $\tilde{w}_{\rho}$ converges to $0$ as $\rho \to \infty$.
Lastly, for any compact domain $D'\subset D_{m+1}(\rho,\dots,\rho)$, let us introduce a notation: By using $\rest{\lambda_{\rho}}{D'}\colon D'\to \R_{>0}$ as a weight function, similar to the norm $|| \cdot ||_{k,\rho}$, we define $||\cdot ||_{k,\rho,D'}$ to be the Sobolev norm for functions (or $(0,1)$-forms) on $D'$ whose weight function is $\rest{\lambda_{\rho}}{D'}$.

By using these descriptions and notations, we can prove the following three results about the diagram when $\rho$ is sufficiently large:
\begin{enumerate}
\item We choose a $C^{\infty}$ function $\mu'_{\rho}\colon D_{m+1} (\rho,\dots ,\rho )\to [0,1]$ which is constant to $1$ on $D_{l+1}(\frac{\rho}{2},\dots ,\frac{\rho}{2})$ and constant to $0$ on $\coprod_{i=1}^l D_{m_i+1}(\rho)$, and satisfies $|d^k \mu'_{\rho}| = \mathcal{O}(\rho^{-1})$ for $k=1,2$. In addition,  for every $i\in \{1,\dots ,l\}$, we assume that $\mu'_{\rho}(s,t)$ is increasing in $s$ and independent of $t$ for $(s,t) \in [-\rho,\rho]\times [0,T_i]\cong Q_{i,\rho}$. 
Then, $\rest{\mu'_{\rho} \cdot f_0}{D_{l+1}(\rho,\dots ,\rho)}$ is extended to a function on $D_{m+1}(\rho,\dots ,\rho)$ so that it is constant to $0$ on $\coprod_{i=1}^l D_{m_i+1}(\rho)$. This satisfies
\[ \textstyle{ \Dbar (\mu'_{\rho} \cdot f_0) = - \frac{\partial \mu'_{\rho}}{\partial s} \zeta_i } \]
on $Q_{i,\rho}$. $\zeta_i$ defined by (\ref{anti-1-form}) corresponds to $1\in \R_i$ and the $L^2$-inner product $\la - \frac{\partial \mu'_{\rho}}{\partial s} \zeta_i ,\zeta_i \ra_{L^2}$ is negative.
%From the definition of $a_{\rho}$ (see \cite[Remark 3.3]{EES-ori}),
This shows that the $\R_i$-component of $a_{\rho}(f_0)$ is negative for every $i\in \{1,\dots ,l\}$.
On the other hand, $\gamma_{4,\rho}^{-1}\circ c_{\rho} \circ \gamma_{3,\rho}(1)$ converges to $ ((0,1)_{i=1,\dots ,l} , 0 )\in  {\begin{bmatrix} \bigoplus_{i=1}^l ( T\mathcal{C}_{m_i+1} \oplus \R_i) \\ T\mathcal{C}_{l+1} \end{bmatrix}} $ as $\rho \to \infty$.
Therefore, the following hold when $\rho$ is sufficiently large:
First, since $a_{\rho}$ is not the zero map and $\dim \ker \Dbar_u=1$, it follows that $\ker \Dbar_{w_{\rho}} =0$.
Second, the $\R_1$-component of $\gamma_2^{-1}\circ a_{\rho}\circ \gamma_1^{-1}(1)$ is negative, while the $\R_1$-component of $\gamma_{4,\rho}^{-1} \circ c_{\rho}\circ \gamma_{3,\rho}(1)$ is positive.
\item Fix any $i\in \{1,\dots, l\}$ and consider any $V_i\in T\mathcal{C}_{m_i+1}$. Then, there exists a compact domain $D'$ in $D_{m_i+1}$ and a variation $(\kappa_i(r))_{r\in [-r_0,r_0]}$ of conformal structures such that $V_i=\rest{\frac{d}{dr}}{r=0} \kappa_i(r)$ and the restriction $\rest{\kappa_i(r)}{D_{m_i+1}\setminus D'}$ is independent of $r$.
When $\rho$ is sufficiently large, $D'$ is contained in $D_{m_i+1}(\rho)$ and we obtain a variation $(\kappa(r)_{\rho})_{r\in [-r_0,r_0]}$ of conformal structures on $D_{m+1}(\rho,\dots ,\rho)$ by gluing $\kappa_i(r)$ with a conformal structure on $D_{l+1}(\rho ,\dots ,\rho) \sqcup \coprod_{i'\neq i}D_{m_{i'}+1}(\rho)$ independent of $r$. Then, $\gamma_{4,\rho}(V_i) = \rest{\frac{d}{dr}}{r=0} \kappa(r)_{\rho}$.
Since $\rest{w_{\rho}}{D'}$ converges to $\rest{v_i}{D'}$ with respect to the Sobolev norm $||\cdot ||_{2,\rho,D'}$ as $\rho \to \infty$, 
\[ \textstyle{ \lim_{\rho \to \infty} || \rest{\frac{d}{dr}}{r=0}  \rest{ dw_{\rho}}{D'} \circ j_{\kappa(0)_{\rho}}\circ j_{\kappa(r)_{\rho}} - \rest{\frac{d}{dr}}{r=0}   \rest{dv_i}{D'} \circ j_{\kappa_i(0)}\circ j_{\kappa_i(r)} ||_{1,\rho,D'} =0 . } \]
%From the definition of $b_{\rho}$, $b_{\rho}(\gamma_2(V_i))$ is close to the $L^2$-orthogonal projection of  $\rest{\frac{d}{dr}}{r=0}  J\circ \rest{dv_i}{D'}\circ j_{\kappa_i(r)} ||_{1,\rho}$ extended by $0$ on $D_{m+1}(\rho,\dots ,\rho) \setminus D'$
In addition, we have the following fact about the map $b_{\rho}$ which can be checked, referring to \cite[Remark 3.3]{EES-ori} and the proof of \cite[Lemma 3.1]{EES-ori}, from the definitions of $b_{\rho}$ and the projections to $\coker \Dbar_{w_{\rho}}$ and $\coker \Dbar_{v_i}$:
For any element 
\[g\in H_{1,\epsilon}[0](D_{m+1}(\rho,\dots ,\rho);T^{*0,1}D_{m+1}(\rho,\dots ,\rho)\otimes \C^{n+3})\]
supported in $D'$, which can also be viewed as an element of $H_{1,\epsilon}[0](D_{m_i+1};T^{*0,1}D_{m_i+1}\otimes \C^{n+3})$, the difference between the projection of $g$ to $\coker \Dbar_{w_{\rho}}$ and  the image for $b_{\rho}$ of the projection of $g$ to $\coker \Dbar_{v_i}$ converges to $0$ as $\rho \to \infty$.
These computations when $\rho \to \infty$ show that
\begin{align}\label{rho-2}
\textstyle{ \lim_{\rho \to \infty} || d_{\rho}\circ \gamma_{4,\rho}(V_i) - b_{\rho} \circ \gamma_2(V_i) ||_{1,\rho} =0  .}
 \end{align}
%Here, we refer to \cite[Remark 3.3]{EES-ori} for the definition of $b_{\rho}$ on $\coker \Dbar_{v_i}$.
In the same way, we can show that
\begin{align}\label{rho-2'}
\textstyle{\lim_{\rho \to \infty} || d_{\rho}\circ \gamma_{4,\rho}(V') - b_{\rho} \circ \gamma_2(V') ||_{1,\rho} =0}
\end{align}
for any $V'\in T\mathcal{C}_{l+1}$.
\item 
Let $I_{\rho}\coloneqq \frac{1}{2}\int_{-\infty}^{\infty} \mu_{\rho}(s)ds$. Then, $d_{\rho}\circ \gamma_{4,\rho}(I_{\rho}e_i)$ for $e_i = 1\in \R_i$ is computed as follows:
Let $j_r$ be the complex structure on $Q_{i,\rho +rI_{\rho}} \cong [-\rho-rI_{\rho}, \rho + r I_{\rho}]\times [0,T_i]$ determined by $j_r(\partial_s)=\partial_t$. For any $r\in [-r_0,r_0]$, where $r_0>0$ is sufficiently small, let $\varphi_r$ be a diffeomorphism
\[\textstyle{ \varphi_r\colon Q_{i,\rho} \to Q_{i,\rho +rI_{\rho}}  \colon (s,t) \mapsto (s + r \cdot (\int_{-\infty}^s\mu_{\rho}(s)ds -I_{\rho}) ,t).
}\]
(It is naturally extended to a diffeomorphism from $D_{m+1}(\rho,\dots ,\rho)$ to $D_{m+1}(\rho_1,\dots,\rho_l)$, where $\rho_i=\rho+r I_{\rho}$ and $\rho_{i'}=\rho$ if $i'\neq i$.)
We obtain a variation $(\varphi_r^*j_r)_{r\in [-r_0,r_0]}$ of complex structures on $Q_{i,\rho}$.
%When $r>0$, it shrinks the length of strip from $2(\rho +r Q_{\rho})$ to $2\rho$.
Then, $d\circ \gamma_4(I_{\rho}e_i)$ is the projection to $\coker \Dbar_{w_{\rho}}$ of a $(0,1)$-form which is supported in $Q_{i,\rho}$ and defined on $Q_{i,\rho}$ by
\[ \textstyle{ \zeta'_{i,\rho}  \coloneqq \rest{\frac{d}{dr}}{r=0} d w_{\rho} \circ j_0 \circ (\varphi_r^* j_r) . } \]
We can explicitly compute that for every $(s,t) \in  [-\rho,\rho]\times [0,T_i] \cong Q_{i,\rho}$,
\[\textstyle{ (\zeta'_{i,\rho})_{(s,t)} \colon \partial_s \mapsto -\mu_{\rho}(s)\frac{\partial w_{\rho}}{\partial s}(s,t) ,\ \partial_t \mapsto \mu_{\rho}(s) \frac{\partial w_{\rho}}{\partial t}(s,t). }\]
Note that its support is contained in $Q'_{i,\rho} \coloneqq [-\frac{7}{8}\rho,\frac{7}{8}\rho]\times [0,T_i]$.
If we replace $w_{\rho}$ in the above formula by the trivial strip $u_{c_i}\colon [-\rho,\rho]\times [0,T_i] \to \R\times Y\colon (s,t)\mapsto (s, c_i(T_i-t))$, it coincides with $(\zeta_{i,\rho})_{(s,t)}$.
(As we fixed in Appendix \ref{subsec-ori-Lag}, $(1,0)$ (resp. $(\sqrt{-1},0)$) in $\C\oplus \C^{n+2}$ corresponds to $\partial_a = \frac{\partial u_{c_i}}{\partial s}$ (resp. $-\partial_z = \frac{\partial u_{c_i}}{\partial t}$).)
Since the $||\cdot ||_{2,\rho,Q'_{i,\rho}}$-norm of the difference between $\rest{w_{\rho}}{Q'_{i,\rho}}$ and the trivial strip converges to $0$ as $\rho \to \infty$,
this implies that
\begin{align}\label{rho-3}
\textstyle{ \lim_{\rho \to \infty} || d_{\rho}\circ \gamma_{4,\rho}(I_{\rho} e_i) - b_{\rho} \circ \gamma_2(e_i) ||_{1,\rho} = \lim_{\rho\to \infty} || \zeta'_{i, \rho} -\zeta_{i,\rho} ||_{1,\rho, Q'_{i,\rho}} =0  . }
\end{align}
\end{enumerate}

Hereafter, we assume $\rho>0$ to be sufficiently large.
We choose a basis $f_1,\dots , f_{m-l-2}$ of $\begin{bmatrix} \bigoplus_{i=1}^l T\mathcal{C}_{m_i+1}  \\ T\mathcal{C}_{l+1} \end{bmatrix}$.
Since the $\R_1$-component of $\gamma_2^{-1} \circ a_{\rho}\circ \gamma_1^{-1}(1)$ is negative (in particular, non-zero),
\[\gamma_2^{-1} \circ a_{\rho}\circ \gamma_1^{-1}(1),e_2,\dots ,e_l,f_1,\dots ,f_{m-l-2}\]
is a basis of $\begin{bmatrix} \bigoplus_{i=1}^l ( T\mathcal{C}_{m_i+1} \oplus \R_i) \\ T\mathcal{C}_{l+1} \end{bmatrix}$.
From the definition of (\ref{isom-exact}), the isomorphism (\ref{isom-sigma1}) is given by
\[\begin{array}{rl}
 &b_{\rho}\circ \gamma_2 (e_2)\wedge \dots \wedge b_{\rho}\circ \gamma_2( e_l) \wedge b_{\rho}\circ \gamma_2(f_1)\wedge \dots \wedge b_{\rho}\circ \gamma_2(f_{m-l-2}) \\
 \mapsto & \gamma_1^{-1}(1) \otimes \left(  a_{\rho} \circ \gamma^{-1}_1(1)\wedge \gamma_2(e_2) \wedge \dots \wedge \gamma_2(e_l)\wedge \gamma_2(f_1) \wedge \dots \wedge \gamma_2(f_{m-l-2}) \right) . \end{array} \]
and this agrees with the map determined by
\[\begin{array}{rl}
 &b_{\rho} \circ \gamma_2 (e_2)\wedge\dots \wedge b_{\rho}\circ \gamma_2( e_l) \wedge b_{\rho} \circ \gamma_2(f_1)\wedge \dots \wedge b_{\rho} \circ \gamma_2(f_{m-l-2}) \\
 \mapsto & -A_{\rho}\cdot \gamma_1^{-1}(1) \otimes \left( \gamma_2(e_1)\wedge \gamma_2(e_2) \wedge \dots \wedge \gamma_2(e_l)\wedge \gamma_2(f_1) \wedge \dots \wedge \gamma_2(f_{m-l-2}) \right)  \end{array} \]
for some $A_{\rho}>0$. (Here, we identify $V_2^*$ and $W_2^*$ in (\ref{isom-exact}) with $V_2$ and $W_2$ via the isomorphism (\ref{isom-dual}), which is defined up to multiplication by a positive scalar.)
On the other hand, since the $\R_1$-component of $\gamma_{4,\rho}^{-1} \circ c_{\rho}\circ \gamma_{3,\rho}(1)$ is positive, the isomorphism (\ref{isom-exact}) induced by the exact sequence (\ref{seq-Gamma}) is given by
\begin{align}\label{isom-Gamma'}
\begin{array}{rl}
& d_{\rho} \circ \gamma_{4,\rho} (e_2)\wedge \dots \wedge d_{\rho} \circ \gamma_{4,\rho}(e_l) \wedge d_{\rho} \circ \gamma_{4,\rho}(f_1)\wedge \dots \wedge d_{\rho} \circ \gamma_{4,\rho} (f_{m-l-2}) \\
\mapsto & A'_{\rho} \cdot \gamma_{3,\rho} (1) \otimes  \left( \gamma_{4,\rho} (e_1) \wedge \gamma_{4,\rho}(e_2)\wedge \dots \wedge \gamma_{4,\rho}(e_l) \wedge \gamma_{4,\rho}(f_1)\wedge \dots \wedge \gamma_{4,\rho}(f_{m-l-2}) \right) 
\end{array}
\end{align}
for some $A'_{\rho} >0$.

Since (\ref{isom-gamma1-4}) is the isomorphism induced by $\gamma_1,\gamma_2 ,\gamma_{3,\rho}$ and $\gamma_{4,\rho}$, the composite map $ (\ref{isom-gamma1-4}) \circ (\ref{isom-sigma1})$ is given by
\[\begin{array}{rl}
 &b_{\rho} \circ \gamma_2 (e_2)\wedge\dots \wedge b_{\rho}\circ \gamma_2( e_l) \wedge b_{\rho} \circ \gamma_2(f_1)\wedge \dots \wedge b_{\rho} \circ \gamma_2(f_{m-l-2}) \\
 \mapsto & -A_{\rho} \cdot \gamma_{3,\rho} (1) \otimes \left( \gamma_{4,\rho} (e_1)\wedge \gamma_{4,\rho} (e_2) \wedge \dots \wedge \gamma_{4,\rho} (e_l)\wedge \gamma_{4,\rho} (f_1) \wedge \dots \wedge \gamma_{4,\rho} (f_{m-l-2}) \right) . \end{array} \]
Consider the subspace of $\begin{bmatrix} \bigoplus_{i=1}^l ( T\mathcal{C}_{m_i+1} \oplus \R_i) \\ T\mathcal{C}_{l+1} \end{bmatrix}$ spanned by $e_2,\dots ,e_l,f_1,\dots ,f_{m+l-2}$. We note that the operator norm of the inverse of $b_{\rho}\circ \gamma_2$ restricted on this subspace is bounded uniformly with respect to $\rho$.
On this subspace, we define an automorphism $\Phi_{\rho}\coloneqq(b_{\rho}\circ \gamma_2)^{-1}\circ (d_{\rho} \circ \gamma_{4,\rho})$, then
 (\ref{rho-2}), (\ref{rho-2'}) and (\ref{rho-3}) show that
\[ \Phi_{\rho}(I_{\rho} e_2)\wedge \dots \Phi_{\rho}(I_{\rho} e_l) \wedge \Phi_{\rho} (f_1)\wedge \dots \wedge \Phi_{\rho}(f_{m-l-2}) = B_{\rho}\cdot e_2\wedge \dots \wedge e_l \wedge f_1\wedge \dots \wedge f_{m-l-2}\]
for $B_{\rho}>0$ such that $\lim_{\rho \to \infty} B_{\rho} =1$. Therefore, $ (\ref{isom-gamma1-4}) \circ (\ref{isom-sigma1})$ is equal to the map given by
\[
\begin{array}{rl}
& d_{\rho} \circ \gamma_{4,\rho} (e_2)\wedge \dots \wedge d_{\rho} \circ \gamma_{4,\rho}(e_l) \wedge d_{\rho} \circ \gamma_{4,\rho}(f_1)\wedge \dots \wedge d_{\rho} \circ \gamma_{4,\rho} (f_{m-l-2}) \\
\mapsto & -C_{\rho}\cdot \gamma_{3,\rho} (1) \otimes  \left( \gamma_{4,\rho} (e_1) \wedge \gamma_{4,\rho}(e_2)\wedge \dots \wedge \gamma_{4,\rho}(e_l) \wedge \gamma_{4,\rho}(f_1)\wedge \dots \wedge \gamma_{4,\rho}(f_{m-l-2}) \right) ,
\end{array}
\]
where $C_{\rho}\coloneqq A_{\rho}B_{\rho}(I_{\rho})^{-(l-1)}>0$.
The assertion of the lemma follows by comparing this map with (\ref{isom-Gamma'}).
%
%Hence, it remains to show that for some $R_{\rho}>0$,
%\begin{align}\label{vectors-multi} \begin{array}{rl}
%& d\circ \gamma_4(e_2)\wedge \dots \wedge d\circ \gamma_4(e_l) \wedge d\circ \gamma_4(f_1)\wedge \dots \wedge d\circ \gamma_4(f_{m-l-2}) \\ =& R_{\rho} \cdot b\circ \gamma_2 (e_2)\wedge\dots \wedge b\circ \gamma_2( e_l) \wedge b\circ \gamma_2(f_1)\wedge \dots \wedge b\circ \gamma_2(f_{m-l-2}) .
%\end{array} \end{align}
%in $\topwedge \coker \Dbar_{w_{\rho}}$.
%
\end{proof}
By Lemma \ref{lem-sigma-tau-1}, the sign of  $ (\ref{isom-gamma1-4}) \circ (\ref{isom-sigma1})$ is $(-1)^1$. Therefore, we can compute that
\begin{align*}
\tau_1 \equiv (\sigma_1 + \sigma_2+ 1) +1 \equiv m + \sum_{i=1}^l  (n-1)(|c_i|+1) \mod 2.
\end{align*}

\textit{ The case of}  (\ref{boundary-2}).
For short, we denote $\mathcal{M}_{\Lambda_-,A_-}(c;b_j,\dots ,b_k)$ by $\mathcal{M}_-$, $\bar{\mathcal{M}}_{\Lambda_-,A_-}(c;b_j,\dots ,b_k)$ by $\bar{\mathcal{M}}_-$ and $\mathcal{M}_{L,C}(a; b_1,\dots , b_{j-1}, c, b_{k+1},\dots ,b_m)$ by $\mathcal{M}$.
Let $T\coloneqq \int c^* (dz-\lambda)$ denote the action of the Reeb chord $c$.
Let $w\in \mathcal{M}^1$ be a $\tilde{J}$-holomorphic curve obtained by gluing $v\in \mathcal{M}$ and $[u]\in \bar{\mathcal{M}}_-$.
By a linear gluing argument, there exists an exact sequence
\begin{align}\label{seq-glue-2}
\xymatrix{
0\ar[r] & \ker \Dbar_w \ar[r] & {\begin{bmatrix} \ker \Dbar_u \\ \ker \Dbar_v \end{bmatrix}} \ar[r] &  {\begin{bmatrix} \coker \Dbar_u \\ \R \\ \coker \Dbar_v \end{bmatrix}} \ar[r] & \coker \Dbar_w \ar[r] & 0.
}\end{align}
Its construction is similar to \cite[Remark 3.3]{EES-ori} and here let us identify any $r \in \R$ with a $\C\oplus \C^{n+2}$-valued $1$-form $r \cdot \zeta_0$ on the gluing strip, where
\begin{align}\label{anti-1-form'}
(\zeta_0 )_{(s,t)} \colon \partial_s \mapsto (-1,0),\ \partial_t \mapsto (\sqrt{-1},0)
\end{align}
for every $(s,t)\in \R\times [0,T]$.
Note that $\ker \Dbar_v=0$ since $\mathcal{M}$ is cut out transversely and has dimension $0$.
We have an isomorphism (\ref{isom-exact}) induced by the exact sequence (\ref{seq-glue-2})
\begin{align}\label{isom-sigma4}
\det \Dbar_w \to  \ker \Dbar_u \otimes  \topwedge (\coker \Dbar_u)^* \otimes \R^* \otimes  \topwedge (\coker \Dbar_v)^* 
%&= \det \Dbar_u \otimes \R^* \otimes  \topwedge (\coker \Dbar_v)^*.
\end{align}
The orientations of $\det\Dbar_w$, $\det \Dbar_u =\topwedge \ker \Dbar_u\otimes \topwedge (\coker \Dbar_u)^*$ and $\det \Dbar_{v}= \topwedge (\coker \Dbar_{v})^*$ are determined as explained in Appendix \ref{subsec-ori-Lag}.
Let $(-1)^{\sigma_3}$ be the sign of the isomorphism (\ref{isom-sigma4}). If we write $m' = k-j+1$ for short, then
\[\sigma_3 \equiv  (n-1)(|c|+1) + (m-m'-1) +(m'+1)(j+1)  + \sum_{i=1}^{j-1} |b_i| \mod 2.\]
This is computed as in \cite[(42)]{K} by comparing the two exact sequences below which come from gluing the capping operators:
% It is a straightforward computation of signs by changing the order of vector spaces and by the argument of Remark \ref{rem-cancel}.
%
\begin{align}\label{seq-2-1}
\xymatrix@C=20pt@R=10pt{
\ker\Dbar_{\tilde{T}} \ar[r] & {\begin{bmatrix} {\begin{bmatrix} \bigoplus_{i=j}^k \ker \Dbar_{b_i,-} \\ \ker \Dbar_{c,+} \\ \ker \Dbar_{u} \end{bmatrix}} \\ 
 {\begin{bmatrix} \bigoplus_{i=1}^{j-1} \ker \Dbar_{b_i,-}  \\ \ker \Dbar_{c,-} \\\bigoplus_{i=k+1}^m \ker \Dbar_{b_i,-} \\ \ker \Dbar_{a,+} \\ \ker \Dbar_{v} \end{bmatrix}} \end{bmatrix}} \ar[r] 
 &  {\begin{bmatrix} {\begin{bmatrix} \bigoplus_{i=j}^k \coker \Dbar_{b_i,-} \\ \coker \Dbar_{c,+} \\ \coker \Dbar_{u} \end{bmatrix}} \\ \R \\ \R^{n+2} \\
 {\begin{bmatrix} \bigoplus_{i=1}^{j-1} \coker \Dbar_{b_i,-}  \\ \coker \Dbar_{c,-} \\\bigoplus_{i=k+1}^m \coker \Dbar_{b_i,-} \\ \coker \Dbar_{a,+} \\ \coker \Dbar_{v} \end{bmatrix}} \end{bmatrix}} \ar[r] & \coker \Dbar_{\tilde{T}}, }
%} 
\end{align}
\begin{align}\label{seq-2-2}
\xymatrix@C=20pt@R=10pt{
\ker\Dbar_{\tilde{T}} \ar[r] & {\begin{bmatrix} {\begin{bmatrix}\ker \Dbar_{c,+}\\ \R' \\ \ker\Dbar_{c,-}  \end{bmatrix}} \\ 
 {\begin{bmatrix} \bigoplus_{i=1}^{m} \ker \Dbar_{b_i,-}  \\ \ker \Dbar_{a,+} \\ \ker\Dbar_{u} \\ \ker \Dbar_{v} \end{bmatrix}} \end{bmatrix}} \ar[r] 
 &  {\begin{bmatrix} {\begin{bmatrix}\coker \Dbar_{c,+}\\  \coker\Dbar_{c,-}  \end{bmatrix}} \\ 
\R' \\ \R^{n+2} \\ {\begin{bmatrix} \bigoplus_{i=1}^{m} \coker \Dbar_{b_i,-}  \\ \coker \Dbar_{a,+} \\ \coker\Dbar_{u} \\ \R \\ \coker \Dbar_{v} \end{bmatrix}} \end{bmatrix}} 
 \ar[r] & \coker \Dbar_{\tilde{T}}.
} 
\end{align}
Here, the first and the last zero maps are omitted and $\Dbar_{\tilde{T}}$ is a $\Dbar$-operator on the disk $D$ defined in the proof of \cite[Theorem 2.5]{K}.
The capping operators $\Dbar_{s,c',\pm}$ for any $c'\in \mathcal{R}(\Lambda)$ are written by $\Dbar_{c',\pm}$. In (\ref{seq-2-2}), $\R'\coloneqq \R$ in the second column is mapped by the identity to $\R'$ in the third column.
$\sigma_3$ can be calculated by the following steps: 
\begin{itemize}
\item We arrange the sequence (\ref{seq-2-2}). In both the second and the third columns, we move $\R'$ to the top. It costs the sign $(-1)^{\sigma_{3,1}}$, where $\sigma_{3,1}=n+1$. Moreover, we switch the places of $\ker \Dbar_{c,+}$ and $\ker \Dbar_{c,-}$ in the second column and also switch the places of $\coker \Dbar_{c,+}$ and $\coker \Dbar_{c,-}$ in the third column. This costs the sign $(-1)^{\sigma_{3,2}}$, where $\sigma_{3,2}=n\cdot |c|$. Then, (\ref{seq-2-2}) is changed to the following form:
\begin{align}\label{seq-2-3}
\xymatrix@C=20pt@R=10pt{
\ker\Dbar_{\tilde{T}} \ar[r] & {\begin{bmatrix} \R' \\ {\begin{bmatrix}\ker \Dbar_{c,-}\\ \ker\Dbar_{c,+}  \end{bmatrix}} \\ 
 {\begin{bmatrix} \bigoplus_{i=1}^{m} \ker \Dbar_{b_i,-}  \\ \ker \Dbar_{a,+} \\ \ker\Dbar_{u} \\ \ker \Dbar_{v} \end{bmatrix}} \end{bmatrix}} \ar[r] 
 &  {\begin{bmatrix} \R' \\ {\begin{bmatrix}\coker \Dbar_{c,-}\\  \coker\Dbar_{c,+}  \end{bmatrix}} \\ 
 \R^{n+2} \\ {\begin{bmatrix} \bigoplus_{i=1}^{m} \coker \Dbar_{b_i,-}  \\ \coker \Dbar_{a,+} \\ \coker\Dbar_{u} \\ \R \\ \coker \Dbar_{v} \end{bmatrix}} \end{bmatrix}} 
 \ar[r] & \coker \Dbar_{\tilde{T}}.
} 
\end{align}
By the argument in Remark \ref{rem-cancel}, we may remove $\R'$ from the second and the third columns.
\item Next, we arrange the exact sequence (\ref{seq-2-1}). In the second column, we switch the places of $\bigoplus_{i=j}^k \ker \Dbar_{b_i,-}$ and $\ker \Dbar_{c,-}$. This costs the sign $(-1)^{\sigma_{3,3}}$, where $\sigma_{3,3} = k(j+1) +1$. In the third column, we switch the places of $\bigoplus_{i=j}^k \coker \Dbar_{b_i,-}$ and $\coker \Dbar_{c,-}$. This costs the sign $(-1)^{\sigma_{3,4}}$, where $\sigma_{3,4}=|c| + (k-j+1) + \sum_{i=1}^{j-1} |b_i|$.
\item After the above arrangements of (\ref{seq-2-1}), we move $\ker \Dbar_u$ in the second column to the place just below $\ker \Dbar_{a,+}$ and also move $\begin{bmatrix} \coker \Dbar_u \\ \R \end{bmatrix}$ in the third column to the place just below $\coker \Dbar_{a,+}$. This costs the sign $(-1)^{\sigma_{3,5}}$, where $\sigma_{3,5} = m$. After this arrangement, the exact sequence agrees with (\ref{seq-2-3}) with $\R'$ removed. 
\item If we set $m' \coloneqq (k-j+1)$, then $\sigma_3$ is given by
\[\sigma_{3,1} + \sigma_{3,2}+\sigma_{3,3} + \sigma_{3,4} +\sigma_{3,5} \equiv  (n-1)(|c|+1) + (m-m'-1) +(m'+1)(j+1)  + \sum_{i=1}^{j-1} |b_i| \mod 2. \]
\end{itemize}

We want to relate (\ref{seq-glue-2}) to the exact sequence (\ref{seq-Gamma}) for $w \in \mathcal{M}^1$. 
Let us consider the following isomorphisms:
\[\xymatrix@R=10pt{
 {\begin{bmatrix} \ker \Dbar_{u} \\\ker \Dbar_v \end{bmatrix}} \ar[r]^-{\gamma'_1} &   {\begin{bmatrix} T_u \mathcal{M}_- \\ T_v \mathcal{M} \end{bmatrix}} & {\begin{bmatrix} \R\oplus  T_{[u]}\bar{\mathcal{M}}_- \\   T_v \mathcal{M} \end{bmatrix}}  \ar[l]_-{(\ref{isom-slice})} \ar[r]^-{\gamma'_3} & T_w \mathcal{M}^1 , \\
{\begin{bmatrix} \coker \Dbar_{u} \\ \R \\ \coker \Dbar_v \end{bmatrix}} & {\begin{bmatrix}  T\mathcal{C}_{m'+1} \\ \R \\ T\mathcal{C}_{m-m'+2} \end{bmatrix}}  \ar[l]_-{\gamma'_2} \ar[r]^-{\gamma'_4} & T\mathcal{C}_{m+1}.
}
\]
These maps are defined as follows:
\begin{itemize}
\item $\gamma'_1$ and $\gamma'_2$ are combinations of $\id_{\R}\colon \R \to \R$ and 
those maps from (\ref{seq-Gamma}) for $u\in \mathcal{M}_-$ and $v\in \mathcal{M}$. 
To see that they are isomorphisms, note that $\mathcal{M}$ and $\bar{\mathcal{M}}_-$ are $0$-dimensional manifolds.
We recall that the isomorphism (\ref{isom-exact}) induced by (\ref{seq-Gamma}) preserves the orientations and (\ref{isom-slice}) changes the orientations by the sign $(-1)^1$ by Proposition \ref{prop-rev-ori}.
\item $\gamma'_3$ is the linearization of a map defined by gluing $v$ and $[u]$. (Here, $1\in \R$ corresponds to $\frac{d}{d\rho}\in T_{\rho}\R$, where $\rho \gg 0$ is a gluing parameter.)
It has the sign $(-1)^{\tau_2}$. Indeed, if $w_{\rho}\in \mathcal{M}^1$ is obtained from a pre-gluing of $\tau_{-\rho}\circ u$ for $\rho \gg 0$ and $v$, then $w_{\rho}$ converges to the boundary point as $\rho\to \infty$.
Thus, a positive vector in $\R$ is mapped to an outward vector in $T_{w}\mathcal{M}^1$.
\item $\gamma'_4$ is the linearization of a map defined by gluing punctured disks with conformal structures. See \cite[Subsection 4.2.2]{EES-ori}. It has the sign $(-1)^{\sigma_4}$, where
\[\sigma_4 \equiv ((m'-1)j+1) + (m'-2) \mod 2.\]
Indeed, $(-1)^{(m'-1)j+1}$ is the sign of the isomorphism ${\begin{bmatrix} \R \\ T\mathcal{C}_{m'+1} \\ T\mathcal{C}_{m-m'+2} \end{bmatrix}} \to T\mathcal{C}_{m+1}$, which follows the convention of gluing in \cite[Subsection 4.2.2]{EES-ori}, and it is computed by \cite[Lemma 4.7]{EES-ori}. $(-1)^{m'-2}$ is the cost of sign for switching the places of $\R$ and $T\mathcal{C}_{m'+1}$.
\end{itemize}
Then, $\gamma'_1,\dots ,\gamma'_4$ and (\ref{isom-slice}) induce an isomorphism
%We use identification $ T_u \mathcal{M}_+ \cong \R $ from (\ref{isom-slice}).
\begin{align}\label{isom-gamma'-1-4}
\ker \Dbar_u \otimes \left( \topwedge (\coker \Dbar_u)^* \otimes \R^* \otimes  \topwedge (\coker \Dbar_v)^* \right) \to \topwedge T_w\mathcal{M}^1 \otimes \topwedge (T\mathcal{C}_{m+1})^*
\end{align}
and the sign of this isomorphism is $(-1)^{1 + \tau_2 +\sigma_4}$.
The composite map
\begin{align*}
(\ref{isom-gamma'-1-4}) \circ (\ref{isom-sigma4}) \colon \det \Dbar_w \to \topwedge T_w\mathcal{M}^1 \otimes \topwedge (T\mathcal{C}_{m+1})^*
\end{align*}
has the sign $(-1)^{\sigma_3 + (1+\tau_2+\sigma_4)}$.
\begin{lem}\label{lem-sigma-tau-2}
If $w\in \mathcal{M}^1$ is sufficiently close to the boundary point $(v,[u])$,
$(\ref{isom-gamma'-1-4}) \circ (\ref{isom-sigma4})$ coincides with the isomorphism (\ref{isom-exact}) induced by the exact sequence (\ref{seq-Gamma}) for $w$ up to multiplication by a positive scalar. In particular, $(\ref{isom-gamma'-1-4})\circ (\ref{isom-sigma4})$ preserves the orientations.
\end{lem}
\begin{proof}
%In a similar as in the proof of Lemma \ref{lem-sigma-tau-1},
We prepare several notations.
We define
\[ \begin{array}{rl}
D_{m'+1}(\rho)\coloneqq & D_{m'+1}\setminus \psi_0 ((T^{-1}\rho ,\infty)\times [0,1] ),\\
D_{m-m'+2}(\rho) \coloneqq& D_{m-m'+2}\setminus \psi_j((-\infty,-T^{-1}\rho)\times [0,1]),
\end{array}\]
for $\rho>0$.
By identifying $\psi_0(T^{-1}\rho,t)\in D_{m'+1}(\rho)$ with $\psi_j(-T^{-1}\rho,t) \in D_{m-m'+2}(\rho) $ for every $t\in[0,1]$,
we obtain a disk $D_{m+1}(\rho)$ with $m+1$ boundary punctures.
Let $Q_{\rho}$ denote the image of the embedding
\[   [-\rho ,\rho ] \times [0,T] \to D_{m+1}(\rho) \colon (s,t) \mapsto \begin{cases} \psi_j (T^{-1} (s-\rho)  ,T^{-1} t) \in D_{m-m'+2}(\rho) & \text{ if }s\in [0,\rho] , \\ \psi_0(T^{-1} (s+\rho) , T^{-1} t) \in D_{m'+1}(\rho) & \text{ if }s\in [-\rho,0]. \end{cases}\]
By a similar argument as in \cite[Section 10.4]{CELN} about gluing, 
 $w_{\rho}\in \mathcal{M}^1$ is obtained from a pre-gluing of $\tau_{-\rho}\circ u$ and $v$ when $\rho>0$ is sufficiently large. $(w_{\rho})_{\rho \gg 0}$ converges to the boundary point $(v,[u])$ as $\rho \to \infty$.

We rewrite $\gamma'_3$ and $\gamma'_4$ for $w=w_{\rho}$ by $\gamma'_{3,\rho}$ and $\gamma'_{4,\rho}$. The isomorphisms $\gamma'_1,\gamma'_2, \gamma'_{3,\rho},\gamma'_{4,\rho}$ and the exact sequences (\ref{seq-glue-2}) and (\ref{seq-Gamma}) for $w_{\rho}\in \mathcal{M}^1$ are combined to the following diagram:
\begin{align*}
\xymatrix@C=20pt@R=15pt{
 0 \ar[r] & \ker \Dbar_{w_{\rho}} \ar[r] & \ker \Dbar_u \ar[d]_-{\gamma'_1} \ar[r]^-{a'_{\rho}} & {\begin{bmatrix} \coker \Dbar_u \oplus \R \\ \coker \Dbar_v \end{bmatrix}} \ar[r] ^-{b'_{\rho}}& \coker \Dbar_{w_{\rho}} \ar[r] & 0 \colon (\ref{seq-glue-2}) \\
 &  & \R \ar[d]_-{\gamma'_{3,\rho}} &  {\begin{bmatrix} T\mathcal{C}_{m'+1} \oplus \R \\ T\mathcal{C}_{m-m'+2} \end{bmatrix}} \ar[d]_-{\gamma'_{4,\rho}} \ar[u]^-{\gamma'_2} & & & \\
 0 \ar[r] & \ker \Dbar_{w_{\rho}} \ar@{=}[uu] \ar[r] & T_{w_{\rho}} \mathcal{M}^1 \ar[r]^-{c'_{\rho}} & T\mathcal{C}_{m+1} \ar[r]^-{d'_{\rho}} & \coker \Dbar_{w_{\rho}} \ar@{=}[uu] \ar[r] & 0 \colon (\ref{seq-Gamma}). }
\end{align*}
Here, for $\gamma'_1$ and $\gamma'_{3,\rho}$, we omit writing $\ker \Dbar_v$, $T_{[u]} \bar{\mathcal{M}}_-$ and $T_v\mathcal{M}$ since they are $0$-dimensional vector spaces. We also identify $\R$ with $T_u\mathcal{M}_-$ via (\ref{isom-slice}).

We can prove the following three results about this diagram when $\rho$ is sufficiently large. (They are analogous to the proof of Lemma \ref{lem-sigma-tau-1}, so we omit detailed discussions.)%The first observation contains the difference from Lemma \ref{lem-sigma-tau-1} which affects to the computation of sings): 
\begin{enumerate}
\item 
$f_0 \coloneqq (\gamma'_1)^{-1}(1)\in \ker \Dbar_u$ is the constant function to $(1,0)\in \C\oplus \C^{n+2}$.
We choose a $C^{\infty}$ function $\mu''_{\rho}\colon D_{m+1} (\rho)\to [0,1]$ which is constant to $1$ on $D_{m'+1}(\frac{\rho}{2})$ and constant to $0$ on $D_{m -m' +2}(\rho)$, and satisfies $|d^k \mu''_{\rho}| = \mathcal{O}(\rho^{-1})$ for $k=1,2$.
In addition, we assume that $\mu''_{\rho}(s,t)$ is decreasing in $s$ and independent of $t$ for $(s,t) \in [-\rho,\rho]\times [0,T] \cong Q_{\rho}$.
Then, $\rest{\mu''_{\rho} \cdot f_0}{D_{m'+1}(\rho)}$ is extended to a function on $D_{m+1}(\rho)$ so that it is constant to $0$ on $ D_{m-m'+2}(\rho)$. This satisfies
\[ \textstyle{ \Dbar (\mu''_{\rho} \cdot f_0) = - \frac{\partial \mu''_{\rho}}{\partial s} \zeta_0 } \]
on $Q_{\rho}$. $\zeta_0$ defined by (\ref{anti-1-form'}) corresponds to $1\in \R$ and the $L^2$-inner product $\la - \frac{\partial \mu''_{\rho}}{\partial s} \zeta_0 ,\zeta_0 \ra_{L^2}$ is positive.
%From the definition of $a'_{\rho}$ (see \cite[Remark 3.1]{EES-ori}), 
This shows that the $\R$-component of $a'_{\rho}(f_0)$ is positive.
On the other hand, $(\gamma_{4,\rho}')^{-1}\circ c'_{\rho} \circ \gamma'_{3,\rho}(1)$ converges to $ ((0,1) , 0 )\in  {\begin{bmatrix} T\mathcal{C}_{m'+1} \oplus \R \\ T\mathcal{C}_{m-m'+2} \end{bmatrix}} $ as $\rho \to \infty$.
Therefore, the following hold when $\rho$ is sufficiently large:
First, since $a'_{\rho}$ is not the zero map and $\dim \ker \Dbar_u=1$, it follows that $\ker \Dbar_{w_{\rho}} =0$.
Second, the $\R$-components of both $(\gamma'_2)^{-1}\circ a'_{\rho}\circ (\gamma'_1)^{-1}(1)$ and $(\gamma'_{4,\rho})^{-1} \circ c'_{\rho}\circ \gamma'_{3,\rho}(1)$ are positive.
\item For any $V\in T\mathcal{C}_{m'+1}$ and $V' \in T\mathcal{C}_{m-m'+2}$,
\[ \lim_{\rho \to \infty} || d'_{\rho}\circ \gamma'_{4,\rho}(V) - b'_{\rho}\circ \gamma'_2(V) ||_{1,\rho} =  \lim_{\rho \to \infty} ||  d'_{\rho}\circ \gamma'_{4,\rho}(V') - b'_{\rho}\circ \gamma'_2(V') ||_{1,\rho} =0 . \]
\item For $I_{\rho}\in \R$ defined in the proof of Lemma \ref{lem-sigma-tau-1},
\[ \lim_{\rho \to \infty}  ||  d'_{\rho}\circ \gamma'_{4,\rho}(I_{\rho}) - b'_{\rho}\circ \gamma'_2(1) ||_{1,\rho}  =0.\]
\end{enumerate}
%
%(The right square corresponds to \cite[(45)]{K}.)
%The first observation contains the difference which affects when computing the signs later:
By taking a basis of ${\begin{bmatrix} T\mathcal{C}_{m'+1} \oplus \R \\ T\mathcal{C}_{m-m'+2} \end{bmatrix}}$, in a way parallel to the proof of Lemma \ref{lem-sigma-tau-1}, we can show that the isomorphism (\ref{isom-exact}) induced by (\ref{seq-Gamma}) is equal to $(\ref{isom-gamma'-1-4})\circ (\ref{isom-sigma4})$ up to multiplication by some positive real number.
\end{proof}

By Lemma \ref{lem-sigma-tau-2}, the sign of $(\ref{isom-gamma'-1-4})\circ (\ref{isom-sigma4})$ is $(-1)^0$. Therefore, we can compute that
\begin{align*}
\tau_2 \equiv \sigma_3+\sigma_4 +1
\equiv  (n-1)(|c|+1)  +(m-m')+\sum_{i=1}^{j-1} |b_i| \mod 2.
\end{align*}
This finishes the proof of Proposition \ref{prop-DGA-ori}.
\end{proof}

\bibliography{reference.bib}

\end{document}